\documentclass[11pt, twoside]{article}

\usepackage[english]{babel}

\usepackage{amssymb}
\usepackage{amsfonts}
\usepackage{amsmath}
\usepackage{amsthm}
\usepackage{color}
\usepackage{mathrsfs}
\usepackage{txfonts}
\usepackage{enumerate}
\usepackage{anysize}
\usepackage{indentfirst}
\usepackage{latexsym}
\usepackage{tabularx}

\usepackage[colorlinks=true,
  linkcolor=blue,
  citecolor=red,
  urlcolor=magenta,
  backref=page
  ]{hyperref}

\allowdisplaybreaks

\newtheorem{theorem}{Theorem}[section]
\newtheorem{lemma}[theorem]{Lemma}
\newtheorem{corollary}[theorem]{Corollary}
\newtheorem{proposition}[theorem]{Proposition}

\theoremstyle{definition}
\newtheorem{remark}[theorem]{Remark}
\newtheorem{definition}[theorem]{Definition}

\numberwithin{equation}{section}

\begin{document}

\arraycolsep=1pt

\title{\bf\Large
Endpoint Lorentz Inequality for Riesz Potentials
with Respect to General Measures
\footnotetext{\hspace{-0.35cm} 2020
\emph{Mathematics Subject Classification}.
Primary 42B25; Secondary 46E30, 46E35, 47B06, 26D10.
\endgraf \emph{Key words and phrases}.
Riesz potential, Lorentz space, Sobolev inequality,
maximal function, set of finite perimeter.
\endgraf
This project is partially supported by the National
Natural Science Foundation of China (Grant Nos. 12431006 and
12371093), the Beijing Natural Science Foundation (Grant No. 1262011),
and the Fundamental Research Funds for the
Central Universities (Grant No. 2253200028).}}
\author{Pingxu Hu,
Dachun Yang and Wen Yuan}
\maketitle

\vspace{-0.8cm}

\begin{center}
\begin{minipage}{13cm}
{\small {\bf Abstract}\quad
Let $n\ge2$, $\alpha\in(0,n)$, and $\mu$ be a locally
finite nonnegative Borel measure on $\mathbb{R}^n$.
We prove that, for any
$f\in C_{\rm c}^\infty(\mathbb{R}^n)$,
\begin{align*}
\|I_\alpha f\|_{L^{\frac{n}{n-\alpha},1}(\mu)}
\le C_{n,\alpha}
\int_{\mathbb{R}^n}
|Rf(x)|[M\mu(x)]^{1-\frac{\alpha}{n}}\,dx,
\end{align*}
where $I_\alpha$ is the Riesz potential,
$R$ is the Riesz transform,
$M\mu$ is the Hardy--Littlewood maximal function of $\mu$,
and $C_{n,\alpha}$ is a positive constant depending only
on $n$ and $\alpha$.
This answers a question posed by Bortz, Moen, Olivo,
P\'erez, and Rela, with the case $\alpha=1$ already known,
and extends Spector's optimal unweighted Lorentz inequality
to general measures.
As applications, we obtain a Lorentz estimate for curl-free
vector fields, Sobolev--Lorentz and Hardy inequalities
with respect to general measures, and an $A_1$-weighted
Lorentz estimate with a sharp exponent of the $A_1$
characteristic.
}
\end{minipage}
\end{center}

\tableofcontents

\section{Introduction}

Throughout the article, we assume that $n\ge2$ and that
all scalar-valued functions are real-valued.
We denote by $\mathscr{M}({\mathbb R}^n)$ the set of all
Lebesgue measurable functions on ${\mathbb R}^n$.
For any $\alpha\in(0,n)$, the \emph{Riesz
potential} $I_\alpha f$ of $f\in \mathscr{M}({\mathbb R}^n)$ is defined,
whenever the integral exists, by
\begin{align*}
I_\alpha f(x):=\frac{1}{\gamma(\alpha)}
\int_{{\mathbb R}^n}\frac{f(y)}{|x-y|^{n-\alpha}}\,dy,
\quad\forall\,
x\in \mathbb{R}^n,
\end{align*}
where $\gamma(\alpha):=
\frac{2^\alpha\pi^{{\frac{n}{2}}}
\Gamma({\frac{\alpha}{2}})}{\Gamma({\frac{n-\alpha}{2}})}$
and $\Gamma$ is the Euler
gamma function.
For any $f\in C_{\rm c}^\infty({\mathbb R}^n)$ (the set of all
infinitely differentiable functions on ${\mathbb R}^n$
with compact support), the
\emph{vector-valued Riesz transform} $Rf$ is defined by
\begin{align*}
Rf(x):=\nabla I_1f(x)
=-{\rm p.v.}\int_{{\mathbb R}^n}
\frac{c_n y}{|y|^{n+1}}f(x-y)\,dy,
\quad \forall\,x\in{\mathbb R}^n,
\end{align*}
where $\nabla u$ denotes the weak gradient of $u$ (see Definition~\ref{def:weakgradient}),
${\rm p.v.}$ denotes the principal value, and
$c_n:=\Gamma(\frac{n+1}{2})/\pi^{\frac{n+1}{2}}$.
The singular integral is understood at points where the
principal value exists.
These operators are fundamental tools in harmonic analysis
and partial differential equations; see, for example,
\cite{GrafakosMFA,LiuXiao2017,LiuXiao2021,LiuXiao2025,Ponce2016,Stein1970}.

The classical Hardy--Littlewood--Sobolev theorem
(see, for instance, \cite[Theorem 1.2.3]{GrafakosMFA}) asserts that,
when $\alpha\in(0,n)$,
$r\in (1,{\frac{n}{\alpha}})$, and ${\frac{1}{p}}={\frac{1}{r}}-{\frac{\alpha}{n}}$,
for any $f\in L^{r}({\mathbb R}^n)$,
\begin{align}\label{eq:IALPha}
\|I_\alpha f\|_{L^{p}({\mathbb R}^n)}
\le C_{n,\alpha,r}\|f\|_{L^{r}({\mathbb R}^n)};
\end{align}
moreover, O'Neil's convolution theorem \cite{ONeil1963} refines
the target space $L^{p}({\mathbb R}^n)$ to
the Lorentz space $L^{p,r}({\mathbb R}^n)$; that is,
for any $f\in L^r({\mathbb R}^n)$,
\begin{align*}
\|I_\alpha f\|_{L^{p,r}({\mathbb R}^n)}
\le C_{n,\alpha,r}\|f\|_{L^{r}({\mathbb R}^n)}.
\end{align*}
Here, $C_{n,\alpha,r}$ is a positive constant
depending only on $n$, $\alpha$, and $r$.
However, at the endpoint $r=1$, \eqref{eq:IALPha} with target space
$L^{{\frac{n}{n-\alpha}}}({\mathbb R}^n)$ fails; see \cite[p.~119]{Stein1970}.

A classical way to recover a strong endpoint estimate at the endpoint $r=1$   is to
replace $L^1({\mathbb R}^n)$ by the real Hardy space $\mathcal H^1({\mathbb R}^n)$.
Stein and Weiss
\cite[p.~31]{SteinWeiss1960} proved that, for any $f\in \mathcal H^1({\mathbb R}^n)$,
\begin{align*}
\|I_\alpha f\|_{L^{\frac{n}{n-\alpha}}({\mathbb R}^n)}
\le C_{n,\alpha}\|f\|_{\mathcal H^1({\mathbb R}^n)}
\sim
\|f\|_{L^1({\mathbb R}^n)}
+\|Rf\|_{L^1({\mathbb R}^n;{\mathbb R}^n)}.
\end{align*}
In 2007, Schikorra,
Spector, and Van Schaftingen \cite{SSVS2017} proved that
the Riesz transform term
$\|Rf\|_{L^1({\mathbb R}^n;{\mathbb R}^n)}$ alone
contains enough cancellation to control the potential term
$\|I_\alpha f\|_{L^{\frac{n}{n-\alpha}}({\mathbb R}^n)}$;
that is, for any $f\in C_{\rm c}^\infty({\mathbb R}^n)$,
\begin{align*}
\|I_\alpha f\|_{L^{{\frac{n}{n-\alpha}}}({\mathbb R}^n)}
\le C_{n,\alpha}\|Rf\|_{L^1({\mathbb R}^n; {\mathbb R}^n)}.
\end{align*}
Spector \cite{Spector2020} further established
the optimal Lorentz refinement by showing that,
for any $f\in C_{\rm c}^\infty({\mathbb R}^n)$,
\begin{align}\label{eq:Spector}
\|I_\alpha f\|_{L^{\frac{n}{n-\alpha},1}({\mathbb R}^n)}
\le C_{n,\alpha}\|Rf\|_{L^1({\mathbb R}^n;{\mathbb R}^n)}
\end{align}
and gave the related
vector-valued formulation:
for any $F\in L^{1}({\mathbb R}^n;{\mathbb R}^n)$
with $\operatorname{curl}F=0$ in the sense of distributions,
\begin{align*}
\|I_\alpha F\|_{L^{\frac{n}{n-\alpha},1}({\mathbb R}^n;{\mathbb R}^n)}
\le C_{n,\alpha}\|F\|_{L^1({\mathbb R}^n;{\mathbb R}^n)}.
\end{align*}
Here and throughout the article,
the \emph{curl-free condition $\operatorname{curl}F=0$} means that,
for any
$i,j\in\{1,\ldots,n\}$ and $\varphi\in C_{\rm c}^\infty({\mathbb R}^n)$,
\begin{align*}
\int_{{\mathbb R}^n}
\left[F_j(x)\,\partial_i\varphi(x)
-F_i(x)\,\partial_j\varphi(x)\right]\,dx=0.
\end{align*}
For further study of endpoint estimates under
differential constraints, see
\cite{BourgainBrezis2007,BCS2026,BCS2026jfaa,GHS2026arxiv,VanSchaftingen2013}.

A related direction concerns weighted estimates and trace
inequalities for general measures.
Weighted estimates for
Riesz potentials and their sharp dependence on the weight are studied
in \cite{MuckenhouptWheeden1974,LaceyMoenPerezTorres2010}.
Related results include weighted Sobolev inequalities
\cite{Perez1995} and quantitative Poincar\'e--Sobolev inequalities
\cite{PerezRela2019}.
For general measures, trace inequalities for $BV$ functions are closely
related to geometric growth conditions on the measure
\cite{MeyersZiemer1977}. Isoperimetric inequalities and the coarea
formula provide a way to prove such estimates: one first estimates
the measure of a set in terms of its perimeter, and then applies
this estimate to the level sets of   functions under consideration; see
\cite{FedererFleming1960,Federer1969,Mazya2011,Ponce2016}.
We also refer to   more recent results concerning isoperimetric estimates for general
measures \cite{PerezRela2023} and weighted fractional Poincar\'e
inequalities \cite{MyyrylainenPerezWeigt2024}.
These results motivate the use of a power of $M\mu$ as the weight
on the right-hand side of an endpoint Sobolev inequality.

For any locally finite nonnegative Borel measure $\mu$ and
for any $x\in {\mathbb R}^n$, let
\begin{align*}
M\mu(x):=\sup_{r\in (0,\infty)}\frac{\mu(B(x,r))}{|B(x,r)|}.
\end{align*}
Bortz et al.
\cite{BMOPR2026} recently
established a new global endpoint Sobolev inequality.
One of its consequences (see \cite[Corollary 2.12]{BMOPR2026})
states that, for any $u\in C_{\rm c}^\infty({\mathbb R}^n)$,
\begin{align}\label{eq:known-first-order}
\|u\|_{L^{\frac{n}{n-1},1}(\mu)}
\le C_n\int_{{\mathbb R}^n}|\nabla u(x)|
[M\mu(x)]^{\frac{n-1}{n}}\,dx.
\end{align}
By taking $u:=I_1f$ and using a cutoff and approximation argument,
\eqref{eq:known-first-order} yields the
measure analogue of \eqref{eq:Spector} at $\alpha=1$.
When $\alpha\in (1,n)$, their Theorem~2.17 establishes a
weighted $L^{{\frac{n}{n-\alpha}}}$
estimate with an Orlicz maximal operator. They then asked
in \cite[Remark~2.18]{BMOPR2026} whether
\begin{align*}
\|I_\alpha f\|_{L^{\frac{n}{n-\alpha},1}(\mu)}
\le C_{n,\alpha}
\int_{{\mathbb R}^n}|Rf(x)|[M\mu(x)]^{1-{\frac{\alpha}{n}}}\,dx
\end{align*}
holds for general locally finite measures $\mu$ when
$\alpha\neq 1$. Our main theorem below answers this question affirmatively.

\begin{theorem}\label{thm:main}
Let $\alpha\in(0,n)$, $p={\frac{n}{n-\alpha}}$, and $\mu$
be a locally finite nonnegative
Borel measure on ${\mathbb R}^n$.
Then there exists a positive constant $C_{n,\alpha}$, depending only on
$n$ and $\alpha$, such that, for any $f\in C_{\rm c}^\infty({\mathbb R}^n)$,
\begin{align}\label{eq:main}
\|I_\alpha f\|_{L^{p,1}(\mu)}
\le C_{n,\alpha}
\int_{{\mathbb R}^n}|Rf(x)|[M\mu(x)]^{\frac{1}{p}}\,dx.
\end{align}
\end{theorem}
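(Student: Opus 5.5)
We will derive the result from the known first-order estimate \eqref{eq:known-first-order}, which handles $\alpha=1$, together with a representation of $I_\alpha f$ as a superposition of $I_1$-type objects. The model is Spector's unweighted argument. There one writes
\begin{align*}
I_\alpha f=I_{\alpha-1}(I_1 f)
\end{align*}
when $\alpha>1$. When $\alpha<1$ one writes $I_\alpha f = I_{\alpha}\,\mathrm{div}\,(I_1 Rf)$-type identities, i.e.
\begin{align*}
I_\alpha f=-\sum_{j=1}^n I_{\alpha}\partial_j I_1 R_jf=c\sum_j R_j I_\alpha R_j f .
\end{align*}
In both cases the aim is to express $I_\alpha f$ through $Rf$ and the kernel $|x-y|^{\alpha-n}$ acting on the vector field $Rf$.

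Concretely, the plan is a pointwise-in-$y$ decomposition. For $\alpha\in(0,n)$ we have
\begin{align*}
I_\alpha f(x)=c_{n,\alpha}\int_{\mathbb R^n}\frac{(x-y)\cdot Rf(y)}{|x-y|^{n-\alpha+1}}\,dy,
\end{align*}
which follows from $f=-\sum_j R_jR_jf$ and $I_\alpha R_j = c\,\partial_j I_{\alpha+1}$. By the triangle inequality in $L^{p,1}(\mu)$, which is normable up to constants, it then suffices to prove a single-point estimate: for each fixed $y$,
\begin{align*}
\bigl\|\,|\cdot-y|^{\alpha-n}\bigr\|_{L^{p,1}(\mu)}\lesssim [M\mu(y)]^{1/p}.
\end{align*}
Integrating this against $|Rf(y)|\,dy$ then gives \eqref{eq:main}.

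The single-point estimate is a direct computation. Set $g=|\cdot-y|^{\alpha-n}$. Then $\{g>t\}=B(y,t^{-1/(n-\alpha)})$, and its measure satisfies
\begin{align*}
\mu(\{g>t\})\le M\mu(y)\,|B|\,t^{-n/(n-\alpha)}=M\mu(y)\,|B|\,t^{-p}.
\end{align*}
This only gives weak $L^{p}$ control, so the $L^{p,1}$ norm $\int_0^\infty \mu(\{g>t\})^{1/p}\,dt$ diverges logarithmically. Hence the naive approach fails, and this failure is precisely the main obstacle: the endpoint $L^{p,1}$ bound requires cancellation that the absolute-value kernel discards.

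To get around this, I would instead exploit the coarea and isoperimetric mechanism behind \eqref{eq:known-first-order}. The first step is to prove the measure isoperimetric inequality
\begin{align*}
\mu(E)^{1/p}\lesssim \int_{\partial^*E}[M\mu]^{1/p}\,d\mathcal H^{n-1}
\end{align*}
for sets of finite perimeter, scaled to the exponent $1/p=1-\alpha/n$. Its analogue for $\alpha=1$ underlies \eqref{eq:known-first-order}. The second step is to establish the estimate for the fractional ``characteristic'' pieces: for such $E$, we want
\begin{align*}
\|I_{\alpha-1}\mathbf 1_E\|_{L^{p,1}(\mu)}
\end{align*}
to be bounded by the weighted perimeter of $E$ when $\alpha>1$, and a dual version when $\alpha<1$. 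The third step decomposes $u=I_1 f$ by the coarea formula, $u=\int_{-\infty}^{\infty}(\mathbf 1_{\{u>t\}}-\mathbf 1_{\{t<0\}})\,dt$, and applies $I_{\alpha-1}$ to each level set. This reduces \eqref{eq:main} to
\begin{align*}
\int_{\mathbb R^n}|\nabla I_1 f|\,[M\mu]^{1/p}=\int_{\mathbb R^n}|Rf|\,[M\mu]^{1/p}.
\end{align*}
I expect the second step to be hardest. For the per-set estimate I would split $I_{\alpha-1}\mathbf 1_E$ into near and far parts at scale $|E|^{1/n}$. The far part is controlled by $M\mu$ together with the isoperimetric bound. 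The near part is handled by a Hedberg-type bound using $M\mu$ on $\partial^*E$. The case $\alpha<1$ would be treated by a duality or Riesz-transform variant in which $I_{\alpha-1}$ is replaced by $R\cdot I_\alpha$ acting on perimeter measures $\nu_E\,\mathcal H^{n-1}\llcorner\partial^*E$.
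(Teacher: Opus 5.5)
Your reduction for $\alpha\ge1$ is the same as the paper's: layer-cake decomposition of $u=I_1f$, Minkowski's inequality in $L^{p,1}(\mu)$, and the weighted coarea formula. For $\alpha>1$ everything then rests on $\|I_{\alpha-1}\mathbf 1_E\|_{L^{p,1}(\mu)}\lesssim\int_{\partial^*E}(M\mu)^{1/p}\,d\mathcal H^{n-1}$, which is Proposition~\ref{pro:22}. The gap is that these per-set estimates carry all the difficulty, you do not prove them, and the tools you propose for them do not work.

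\textbf{Step 1 is false for $\alpha\ne1$.} With $\mu=dx$ and $E=B(\mathbf 0,r)$ it says $r^{n-\alpha}\lesssim r^{n-1}$. This fails as $r\to\infty$ when $\alpha<1$ and as $r\to0$ when $\alpha>1$; only the exponent $\frac{n-1}{n}$ is scale invariant.

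\textbf{The single-scale split loses too much.} A near/far split at the one scale $|E|^{1/n}$ with Hedberg-type bounds cannot see that $E$ may consist of many pieces at much smaller scales. For example, take $\mu=dx$, $n\ge3$, $\beta:=\alpha-1<n-2$, and $E$ a union of $N$ widely separated balls of radius $\varepsilon$. The near-part bound $|E|^{\beta/n}$, which lives on the $|E|^{1/n}$-neighbourhood of $E$, has $L^{p,1}$ norm $\sim N^{(2n-2-\beta)/n}\varepsilon^{n-1}$. This is far larger than $P(E)\sim N\varepsilon^{n-1}$.

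\textbf{What is missing} is a localization of $\mu$ to pieces of $\partial^*E$ at every scale. The paper does this in four moves:
\begin{enumerate}[(i)]
\item It dominates the potential by dyadic sums over cubes grouped into layers $2^{-j-1}<m_Q\le2^{-j}$.
\item It uses the relative isoperimetric inequality to get $2^{-j\frac{n-1}{n}}\ell(Q)^{n-1}\lesssim\mathcal H^{n-1}(\partial^*E\cap Q^\circ)$.
\item It uses $\mu(Q)\lesssim\ell(Q)^n\mathop{\rm ess\,inf}_{\partial^*E\cap Q^\circ}M\mu$.
\item It sums over scales via Lemma~\ref{lem:supercritical-counting} and over layers via the gain $2^{-j/n}$.
\end{enumerate}
For $\alpha>1$ (and likewise $\alpha=1$, with $\mathbf 1_{5B_i}$ in place of $I_{\alpha-1}\mathbf 1_{5B_i}$), a weighted boxing argument would also work. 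By Vitali, cover $E^1$ by balls $5B_i$, where the $B_i=B(x_i,r_i)$ are disjoint and $E$ has density $\frac12$ in each, so $r_i^{n-1}\lesssim\mathcal H^{n-1}(\partial^*E\cap B_i)$. A direct computation gives $\|I_{\alpha-1}\mathbf 1_{5B_i}\|_{L^{p,1}(\mu)}\lesssim r_i^{n-1}\inf_{5B_i}(M\mu)^{1/p}$. Positivity of $I_{\alpha-1}$ and disjointness of the $B_i$ then let you sum.

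\textbf{For $\alpha\in(0,1)$ the proposal contains no argument.} Here $I_{\alpha-1}=(-\Delta)^{(1-\alpha)/2}$ is not positive, so applying it to level sets requires a positive sublinear majorant. The paper uses $|(-\Delta)^{s/2}u|\le c_{n,s}\mathcal D^su$ with $s=1-\alpha$, together with the exact identity $\mathcal D^su=\int_{\mathbb R}\mathcal D^s\mathbf 1_{A_t}\,dt$, where $A_t=\{u>t\}$ for $t>0$ and $A_t=\{u<t\}$ for $t<0$. It must then bound $\|\mathcal D^s\mathbf 1_{E^1}\|_{L^{p,1}(\mu)}$ (Proposition~\ref{pro:11}).

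That quantity is not monotone in $E$, so a covering argument does not apply directly. The paper needs the two-sided density analysis of Lemma~\ref{lem:last-crossing}. Cubes in which $x$ lies in the majority are summed geometrically within each layer. The minority chain is controlled by its last crossing cube, which has $m_Q\ge2^{-n-1}$. The paper must also prove $\mu(\partial^eE)=0$ whenever the right-hand side is finite, because $\mu$ may charge the boundary.

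The duality route you suggest does not bypass this. Pairing $R\cdot I_\alpha(\nu_E\,\mathcal H^{n-1}\llcorner\partial^*E)$ with $g\in L^{p',\infty}(\mu)$ and bounding the kernel by $|x-y|^{\alpha-n}$ leads straight back to the single-point estimate whose logarithmic divergence you already identified. The cancellation has to come from the geometry of $\partial^*E$, not from the kernel.
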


\begin{remark}
\begin{enumerate}[{\rm (i)}]
\item If $\mu:=dx$, then $M\mu\equiv 1$.
In this case,
\eqref{eq:main} coincides with Spector's estimate
\eqref{eq:Spector} for $\alpha\in(0,n)$.

\item We note that Theorem~\ref{thm:main}
in the case $\alpha=1$ follows from \cite[Corollary 2.12]{BMOPR2026}.
However, our proof differs from the proof of \cite[Corollary 2.12]{BMOPR2026}.

\item
For general weights,
Theorem \ref{thm:main} when $\alpha\in (1,n)$ improves
\cite[Theorem~2.17]{BMOPR2026}.
Indeed,
if $\alpha\in (1,n)$, $p={\frac{n}{n-\alpha}}$, and $v$ is a weight,
then \cite[Theorem~2.17]{BMOPR2026} implies that, for any
$\epsilon\in (0,\infty)$ and $f\in C_{\rm c}^\infty({\mathbb R}^n)$,
\begin{align}\label{eq:mrem1}
\|I_\alpha f\|_{L^p(v)}
\le C_{n,\alpha,\epsilon}
\int_{{\mathbb R}^n}|Rf(x)|[M_\Psi v(x)]^{{\frac{1}{p}}}\,dx,
\end{align}
where, for any $t\in (0,\infty)$,
\begin{align*}
\Psi(t):=t[\log(e+t)]^{{\frac{1}{n-\alpha}}+\epsilon},
\end{align*}
$M_\Psi$ denotes the associated Orlicz maximal operator (see \cite[(37)]{BMOPR2026}
for its definition),
and $C_{n,\alpha,\epsilon}$ is a positive constant
depending only on $n$, $\alpha$, and $\epsilon$.
In contrast, taking $d\mu=v\,dx$ in \eqref{eq:main} gives
\begin{align*}
\|I_\alpha f\|_{L^{p,1}(v)}
\le C_{n,\alpha}
\int_{{\mathbb R}^n}|Rf(x)|[Mv(x)]^{{\frac{1}{p}}}\,dx.
\end{align*}
This improves both sides of \eqref{eq:mrem1}.
Indeed, on the left-hand side,
$L^{p,1}(v)\hookrightarrow L^p(v)$. On the right-hand side, $\Psi(t)\ge t$ implies
$Mv\lesssim M_\Psi v$. Thus,
\begin{align*}
\|I_\alpha f\|_{L^p(v)}
\lesssim \|I_\alpha f\|_{L^{p,1}(v)}
\lesssim \int_{{\mathbb R}^n}|Rf(x)|[Mv(x)]^{{\frac{1}{p}}}\,dx
\lesssim \int_{{\mathbb R}^n}|Rf(x)|[M_\Psi v(x)]^{{\frac{1}{p}}}\,dx,
\end{align*}
and hence the above assertion holds.
\end{enumerate}
\end{remark}

The arguments used to prove Theorem \ref{thm:main} also yield the following
measure-theoretic analogue of Spector's curl-free estimate
\cite[Theorem~1.1]{Spector2020}.

\begin{corollary}\label{cor:curl-free}
Let $\alpha\in(0,n)$, $p={\frac{n}{n-\alpha}}$, and $\mu$ be a locally finite
nonnegative Borel measure on ${\mathbb R}^n$.
Then there exists a positive constant $C_{n,\alpha}$, depending only
on $n$ and $\alpha$, such that,
for any $F\in L^1({\mathbb R}^n;{\mathbb R}^n)$ with $\operatorname{curl}F=0$,
\begin{align}\label{eq:curl-free-measure}
\|I_\alpha F\|_{L^{p,1}(\mu;{\mathbb R}^n)}
\le C_{n,\alpha}
\int_{{\mathbb R}^n}|F(x)|[M\mu(x)]^{\frac{1}{p}}\,dx.
\end{align}
\end{corollary}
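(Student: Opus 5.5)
We deduce Corollary~\ref{cor:curl-free} from the same machinery that gives Theorem~\ref{thm:main}, by writing the curl-free field as a gradient. The basic identity is that a curl-free $F\in L^1({\mathbb R}^n;{\mathbb R}^n)$ with enough regularity satisfies $F=\nabla I_1 g$ with $g:=R\cdot F=-\sum_j R_jF_j$ (formally $F=R(R\cdot F)$ up to sign). Then $I_\alpha F=I_\alpha R g=\nabla I_{\alpha+1}g$, at least formally. The working tool is the weighted estimate behind Theorem~\ref{thm:main}, expressed in terms of $u:=I_1 g$:
\begin{align*}
\|I_{\alpha-1}\nabla u\|_{L^{p,1}(\mu)}\lesssim \int_{{\mathbb R}^n}|\nabla u|[M\mu]^{\frac1p}\,dx .
\end{align*}
I expect the proof of Theorem~\ref{thm:main} to contain this vector-valued statement. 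That proof decomposes $|\nabla u|$ via the coarea formula into level sets of $u$, applies a measure isoperimetric-type bound $\mu$-Lorentz estimate for $I_\alpha$ of the normal measure of a set of finite perimeter, and sums using the $L^{p,1}$ normability.

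Concretely, I would proceed as follows.

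\textbf{Step 1 (smooth case).} Take $F$ smooth, compactly supported and curl-free. Then $F=\nabla\phi$ with $\phi$ smooth. Because $F$ has compact support, $\phi$ is constant outside a large ball, and we may subtract that constant so that $\phi\in C_{\rm c}^\infty({\mathbb R}^n)$. Hence $I_\alpha F=\nabla I_\alpha\phi$ componentwise. Setting $f:=(-\Delta)^{1/2}\phi$ gives $Rf=\nabla\phi=F$ and $I_\alpha f=I_{\alpha+1}... $; more cleanly, the argument of Theorem~\ref{thm:main} applied to $\phi$ bounds each component $I_\alpha\partial_i\phi$ by $\int|\nabla\phi|[M\mu]^{1/p}$. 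The point is that each component $I_\alpha \partial_i\phi$ is represented by integrating over level sets $\{\phi>t\}$ the kernel against the normal component $\nu_i\,d\mathcal H^{n-1}$. The isoperimetric step (bounding the $L^{p,1}(\mu)$ norm of $I_\alpha$ applied to the perimeter measure of a set $E$ by $\int_{\partial^*E}[M\mu]^{1/p}\,d\mathcal H^{n-1}$) does not care which component is taken. Summing over components gives \eqref{eq:curl-free-measure} for such $F$.

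\textbf{Step 2 (approximation).} For general $F\in L^1$ with $\operatorname{curl}F=0$, mollify to $F_\varepsilon=F*\rho_\varepsilon$, which is still curl-free, and then truncate. Truncation is the delicate part: $\phi_\varepsilon\chi_R$ has gradient $F_\varepsilon\chi_R+\phi_\varepsilon\nabla\chi_R$, and the error term must vanish in the weighted norm. I would choose $\phi_\varepsilon$ normalized by a suitable constant and use a Poincaré-type control on annuli, bounding $\int|\phi_\varepsilon-c_R||\nabla\chi_R|[M\mu]^{1/p}$ via the weighted inequality itself or via $M\mu$ being comparable on the annulus scale. Passing to the limit on the left uses Fatou for $L^{p,1}(\mu)$, since $I_\alpha F_{\varepsilon}\to I_\alpha F$ pointwise a.e.\ (indeed everywhere where the integral converges absolutely). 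On the right, $\int|F*\rho_\varepsilon|[M\mu]^{1/p}\le \int|F|\,([M\mu]^{1/p}*\rho_\varepsilon)$. This requires $[M\mu]^{1/p}*\rho_\varepsilon\lesssim [M\mu]^{1/p}$, which holds only in an averaged sense. Instead, I would smooth $\mu$ rather than $F$: replace $\mu$ by $\mu*\rho_\varepsilon$, for which $M(\mu*\rho_\varepsilon)\le M\mu*\rho_\varepsilon$ pointwise, and note that $I_\alpha F$ against $\mu*\rho_\varepsilon$ equals $I_\alpha F_\varepsilon$ against $\mu$.

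\textbf{Main obstacle.} The main obstacle is this approximation step, specifically the cutoff error when $\mu$ is arbitrary and $F$ is only $L^1$. If it resists, I would instead run the level-set argument directly on a BV potential of $F$ locally, avoiding global truncation.
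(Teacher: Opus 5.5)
Your skeleton (smooth compactly supported case, then approximation and Fatou) matches the paper, and the reduction $F=\nabla\phi$ with $\phi\in C_{\rm c}^\infty({\mathbb R}^n)$ is fine. The gap is the smooth estimate itself, $\|I_\alpha\nabla\phi\|_{L^{p,1}(\mu;{\mathbb R}^n)}\lesssim\int_{{\mathbb R}^n}|\nabla\phi|(M\mu)^{\frac1p}\,dx$: it is not contained in the proof of Theorem~\ref{thm:main}, as you assume. (Your displayed ``working tool'' should read $I_\alpha\nabla u$; $I_{\alpha-1}\nabla u$ has the wrong scaling for $p=\frac{n}{n-\alpha}$.) That proof only controls the scalar $I_\alpha f$, namely $(-\Delta)^{(1-\alpha)/2}u$, $u$, or $I_{\alpha-1}u$ with $u=I_1f$. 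It does so by slicing $u$ (not $\nabla u$) into level sets and using the positive quantities $\mathcal D^s{\mathbf 1}_{E^1}$, $\mu(E^1)$, and $I_\beta{\mathbf 1}_E$ of Propositions~\ref{pro:11} and~\ref{pro:22}. It never estimates a potential of the normal measure $D{\mathbf 1}_E$. Even formally, Theorem~\ref{thm:main} applied to $f=(-\Delta)^{1/2}\phi$ bounds the scalar $I_\alpha f$ (e.g.\ $I_{\alpha-1}\phi$ if $\alpha>1$), not the components $I_\alpha\partial_i\phi$.

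For $\alpha\ne1$ the missing step is short. Integration by parts gives $|I_\alpha\nabla\phi|\lesssim\mathcal D^{1-\alpha}\phi$ if $\alpha\in(0,1)$ and $|I_\alpha\nabla\phi|\lesssim I_{\alpha-1}|\phi|$ if $\alpha\in(1,n)$. Equivalently, $|I_\alpha D{\mathbf 1}_E|\lesssim\mathcal D^{1-\alpha}{\mathbf 1}_E$, resp.\ $I_{\alpha-1}{\mathbf 1}_E$, which is exactly your ``isoperimetric step''; Cases~1 and~3 then go through. At $\alpha=1$, however, $I_1\nabla\phi=R\phi$ and $I_1D{\mathbf 1}_E=R{\mathbf 1}_E$ are singular integrals. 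Nothing in Propositions~\ref{pro:11} and~\ref{pro:22} dominates them, and Case~2 only concerns $\mu(\{|u|>t\})$. This needs a new idea, which your proposal lacks. The paper uses that $\nabla_z|z|^{1-n}$ has mean zero on each annulus $\{2^k\le|x-y|<2^{k+1}\}$ to get $|R{\mathbf 1}_E|\lesssim T_E:=\sum_\nu\sum_{Q\in\mathscr D^\nu}m_Q{\mathbf 1}_Q$. It then bounds $\|T_E\|_{L^{\frac{n}{n-1},1}(\mu)}$ by running Lemmas~\ref{lem:HH}, \ref{lem:supercritical-counting}, and~\ref{lem:local-charge} with $s=0$ on each layer $\mathcal G^\nu_{j,k}$, and finally slices $u^{\pm}$.

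The approximation step is also left open, and the obstruction you name is not the real one. Since $\frac1p<1$, the Coifman--Rochberg theorem gives $w:=(M\mu)^{\frac1p}\in A_1$ with $[w]_{A_1}$ depending only on $n$ and $\alpha$, whenever $M\mu<\infty$ almost everywhere (e.g.\ for compactly supported $\mu$). Hence $w*\rho_\varepsilon\lesssim Mw\lesssim w$ holds pointwise almost everywhere, not merely on average. This is what makes mollification converge in $L^1(w\,dx)$, and it is also what any weighted Poincar\'e inequality on annuli would require; $M\mu$ is not comparable on annuli in general, e.g.\ when $\mu$ has atoms there.

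The paper avoids weighted Poincar\'e altogether. It first treats compactly supported $\mu$, for which $w(x)\lesssim\mu({\mathbb R}^n)^{\frac1p}|x|^{\alpha-n}$ for large $|x|$. With a potential $\Phi\in L^{\frac{n}{n-1}}({\mathbb R}^n)$ of $F$, obtained from Bonami--Poornima and the Sobolev inequality, H\"older's inequality then kills the cutoff error $\frac1R\int_{A_R}|\Phi|w\,dx$ on $A_R:=B({\bf 0},2R)\setminus B({\bf 0},R)$. General $\mu$ follows by monotone convergence applied to $\mu(B({\bf 0},m)\cap\cdot)$, whose maximal functions are bounded by $M\mu$.

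Your substitute of smoothing $\mu$ does not work. The identity ``$I_\alpha F$ against $\mu*\rho_\varepsilon$ equals $I_\alpha F_\varepsilon$ against $\mu$'' holds for the linear pairing $\int\cdot\,d\mu$ but not for the $L^{p,1}$ norm. Also, $M(\mu*\rho_\varepsilon)\le M\mu*\rho_\varepsilon$ is useless, since the right-hand side can be infinite (for $\mu=\delta_{\bf 0}$, $M\mu\sim|x|^{-n}\notin L^1_{\rm loc}$). Finally, $I_\alpha F_\varepsilon(x)\to I_\alpha F(x)$ wherever $I_\alpha|F|(x)<\infty$ is correct, but you also need $I_\alpha|F|<\infty$ $\mu$-almost everywhere. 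The paper gets this, and $L^1(\mu)$ convergence, from $\int I_\alpha|G|\,d\mu\lesssim\mu({\mathbb R}^n)^{\frac{\alpha}{n}}\int|G|w\,dx$ for compactly supported $\mu$.
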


\begin{remark}
Corollary~\ref{cor:curl-free} when $\mu:=dx$
reduces to
\cite[Theorem~1.1]{Spector2020}.
\end{remark}

For any $\alpha\in(0,1)$ and $u\in C_{\rm c}^\infty({\mathbb R}^n)$,
the \emph{Riesz fractional gradient} $\nabla^\alpha u$
is defined by setting, for any $x\in{\mathbb R}^n,$
\begin{align*}
\nabla^\alpha u(x)
:=\nabla I_{1-\alpha}u(x)
=\kappa_{n,\alpha}\int_{{\mathbb R}^n}
\frac{u(x)-u(y)}{|x-y|^{n+\alpha}}
\frac{x-y}{|x-y|}\,dy,
\end{align*}
where $\kappa_{n,\alpha}$ is a positive normalization constant
depending only on $n$ and $\alpha$; see
\cite[Definition~1.1]{ShiehSpector2015}
and \cite[pp.~246--247]{Ponce2016}.
When $\alpha=1$, we let $\nabla^1u:=\nabla u$.

We obtain the following Sobolev--Lorentz inequalities.

\begin{corollary}\label{cor:fractional-gradient}
Let $\alpha\in(0,1]$, $p={\frac{n}{n-\alpha}}$,
and $\mu$ be a locally finite nonnegative
Borel measure on ${\mathbb R}^n$.
Then there exists a positive constant $C_{n,\alpha}$, depending only
on $n$ and $\alpha$, such that, for any
$u\in C_{\rm c}^\infty({\mathbb R}^n)$,
\begin{align}\label{eq:fractional-gradient-Lorentz}
\|u\|_{L^{p,1}(\mu)}
\le C_{n,\alpha}
\int_{{\mathbb R}^n}|\nabla^\alpha u(x)|[M\mu(x)]^{\frac{1}{p}}\,dx.
\end{align}
\end{corollary}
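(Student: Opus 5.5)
Corollary~\ref{cor:fractional-gradient} will follow from Theorem~\ref{thm:main}, applied with $\alpha$ and a suitably chosen function $f$. For $u\in C_{\rm c}^\infty({\mathbb R}^n)$ and $\alpha\in(0,1]$, set $f:=(-\Delta)^{\alpha/2}u$, so that formally $I_\alpha f=u$. Then
\begin{align*}
Rf=\nabla I_1 (-\Delta)^{\alpha/2}u=\nabla I_{1-\alpha}u=\nabla^\alpha u,
\end{align*}
using the semigroup property $I_1 I_{-\alpha}=I_{1-\alpha}$ on the Fourier side. When $\alpha=1$ this reads $Rf=\nabla u$, with $f=(-\Delta)^{1/2}u=-\sum_j R_j\partial_j u$. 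Plugging into \eqref{eq:main} then gives \eqref{eq:fractional-gradient-Lorentz} directly, and the constant depends only on $n$ and $\alpha$.

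The only issue is that $f=(-\Delta)^{\alpha/2}u$ is not in $C_{\rm c}^\infty({\mathbb R}^n)$. It is a smooth function with decay $|f(x)|\lesssim |x|^{-n-\alpha}$ and vanishing integral, so Theorem~\ref{thm:main} must be extended to this class by approximation. The plan is as follows.

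\begin{enumerate}[{\rm (i)}]
\item Take $\psi_R(x):=\psi(x/R)$, a standard cutoff, and set $f_R:=\psi_R f-c_R\phi$. Here $\phi$ is a fixed bump with $\int\phi=1$, and $c_R:=\int\psi_R f\to 0$ is chosen so that $\int f_R=0$.
\item Check that $I_\alpha f_R\to u$ pointwise everywhere. The kernel $|x-y|^{\alpha-n}$ is locally integrable, and $f_R\to f$ in $L^1\cap L^\infty$ with uniform decay, which gives the convergence.
\item Conclude by Fatou's lemma for the Lorentz quasi-norm, in the form $\|u\|_{L^{p,1}(\mu)}\le\liminf_R\|I_\alpha f_R\|_{L^{p,1}(\mu)}$. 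This follows from pointwise a.e.\ convergence via lower semicontinuity of distribution functions.
\end{enumerate}

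On the right-hand side, we need
\begin{align*}
\int|Rf_R-Rf|\,(M\mu)^{1/p}\,dx\to0.
\end{align*}
This is the main obstacle, since $M\mu$ may grow at infinity, and it is where care is needed. I would write
\begin{align*}
Rf_R-Rf=R[(\psi_R-1)f]-c_RR\phi,
\end{align*}
and use the mean-zero property to obtain $|R g(x)|\lesssim \|g\|_{L^1}\,r|x|^{-n-1}$ for $g$ supported away from $x$, plus local bounds. This requires $(M\mu)^{1/p}$ to be integrable against tails of order $|x|^{-n-1}$ and $|x|^{-n-\alpha}$.

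If that integrability fails, the right-hand side of \eqref{eq:fractional-gradient-Lorentz} is infinite unless $\nabla^\alpha u$ decays better. Note that $\nabla^\alpha u$ itself only decays like $|x|^{-n-\alpha}$, and for $\alpha\ge 0$ it behaves like $\nabla I_{1-\alpha}u$, which decays as $|x|^{-n-1+\alpha}\cdot$(moment terms). So the trouble case is essentially where the integral is infinite and the inequality is trivial.

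In the finite case, I would instead dominate via a localization argument. If $\int|\nabla^\alpha u|(M\mu)^{1/p}<\infty$, then, since $|\nabla^\alpha u|\gtrsim|x|^{-n-1+\alpha}|\!\int u|$ or its moments near infinity, one gets the needed tail integrability of $(M\mu)^{1/p}|x|^{-n-1+\alpha}$. This dominates the errors, and dominated convergence finishes the proof.

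For $\alpha=1$ one can alternatively bypass this by citing \eqref{eq:known-first-order} directly.
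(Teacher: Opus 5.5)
Your reduction to Theorem~\ref{thm:main} via $f=(-\Delta)^{\alpha/2}u$ is fine in outline, and citing \cite[Corollary~2.12]{BMOPR2026} for $\alpha=1$ is legitimate. But the step you flag as the main obstacle, $\int_{\mathbb R^n}|Rf_R-Rf|\,w\,dx\to0$ with $w:=(M\mu)^{1/p}$, is not proved, and the argument you sketch for it fails. That leaves a genuine gap for $\alpha\in(0,1)$.

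\textbf{The splitting fails.} You write $Rf_R-Rf=R[(\psi_R-1)f]-c_RR\phi$ and estimate the two terms separately. Neither $(\psi_R-1)f$ nor $\phi$ has mean zero, and $(\psi_R-1)f$ is not compactly supported. So the bound $|Rg(x)|\lesssim\|g\|_{L^1}r|x|^{-n-1}$ is unavailable for either piece. In fact $c_RR\phi(x)\sim c_R|x|^{-n}$ at infinity, so $\int|c_RR\phi|\,w\,dx=\infty$ already for $\mu=dx$.

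\textbf{The combined term needs control of $w$ at infinity.} Only $h_R:=(1-\psi_R)f+c_R\phi$ has mean zero. Since $|f(y)|$ is of order $|y|^{-n-\alpha}$ when $\int u\neq0$ and $\alpha\le1$, its first moment is infinite. The best one gets is roughly $|Rh_R(x)|\lesssim|x|^{-n-\alpha}[1+\log(|x|/R)]$ for $|x|\ge2R$ and $|Rh_R(x)|\lesssim R^{-n-\alpha}+R^{-\alpha}(1+|x|)^{-n}$ for $|x|\le R/2$. This tends to zero in $L^1(w\,dx)$ only if $w$ is controlled at infinity.

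\textbf{Your source of that control does not work.} You try to read off tail integrability of $w$ from finiteness of the right-hand side. First, $\nabla^\alpha u$ decays like $|x|^{-n-\alpha}$, not $|x|^{-n-1+\alpha}$. Second, when $\alpha=1$ (where $\nabla^1u=\nabla u$ has compact support) or when $\int u=0$, finiteness of $\int|\nabla^\alpha u|w\,dx$ says nothing useful about $w$ near infinity. So your claim that the ``trouble case'' is the trivial one is unjustified.

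\textbf{What would rescue your route.} The missing ingredient is a property of $M\mu$ itself. If $M\mu(x_0)<\infty$ for one $x_0$, then $\mu(B(x_0,r))\lesssim r^n$ for all $r$; this is forced when the right-hand side is finite and $u\not\equiv0$. Kolmogorov's inequality then gives $\int_{B(\mathbf 0,r)}w\,dx\lesssim r^n$ for large $r$. Hence $\int_{|x|>1}|x|^{-n-\varepsilon}[1+\log|x|]\,w\,dx<\infty$ for every $\varepsilon>0$. With this, the estimate of $Rh_R$ on $\{|x|\le R/2\}$, $\{R/2<|x|<2R\}$ and $\{|x|\ge2R\}$ closes.

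\textbf{How the paper avoids the issue.} It notes that the proof of Theorem~\ref{thm:main} only uses that $I_1f$ is smooth, bounded and decaying with gradient $Rf$. It then runs that proof directly on $v:=I_{1-\alpha}u$, using:
\begin{itemize}
\item $u=(-\Delta)^{\frac{1-\alpha}{2}}v$ pointwise, so $|u|\le c_{n,s}\mathcal D^{1-\alpha}v$;
\item the layer-cake identity for $\mathcal D^{1-\alpha}v$ and Proposition~\ref{pro:11};
\item the weighted coarea formula with $\nabla v=\nabla^\alpha u$.
\end{itemize}
For $\alpha=1$ it applies the argument \eqref{eq:ca2} to $u$ itself. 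No approximation of $f$ and no growth information on $M\mu$ are needed.
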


\begin{remark}
When $\alpha\in (0,1)$ and $\mu :=dx$,
Corollary \ref{cor:fractional-gradient}
in this case reduces to
\cite[(1.3)]{Spector2020}.
Corollary~\ref{cor:fractional-gradient} in the case $\alpha=1$
reduces to \cite[Corollary~2.12]{BMOPR2026}
with $q:=n/(n-1)$.
\end{remark}

The Lorentz estimate in Corollary~\ref{cor:fractional-gradient}
also gives the following measure-theoretic analogue of
\cite[Theorem~1.4]{Spector2020}.

\begin{corollary}\label{cor:fractional-hardy-pole}
Let $\alpha\in(0,1]$ and $\mu$ be a locally finite nonnegative Borel
measure on ${\mathbb R}^n$.
Then there exists a positive constant $C_{n,\alpha}$, depending only
on $n$ and $\alpha$, such that, for any $a\in{\mathbb R}^n$ satisfying
$M\mu(a)<\infty$ and for any $u\in C_{\rm c}^\infty({\mathbb R}^n)$,
\begin{align}\label{eq:fractional-hardy-pole}
\int_{{\mathbb R}^n}\frac{|u(x)|}{|x-a|^\alpha}\,d\mu(x)
\le C_{n,\alpha}[M\mu(a)]^{\frac{\alpha}{n}}
\int_{{\mathbb R}^n}|\nabla^\alpha u(x)|
[M\mu(x)]^{1-{\frac{\alpha}{n}}}\,dx.
\end{align}
\end{corollary}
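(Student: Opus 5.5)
We will derive \eqref{eq:fractional-hardy-pole} from Corollary~\ref{cor:fractional-gradient} by Hölder's inequality in Lorentz spaces. Let $p=\frac{n}{n-\alpha}$, so $p'=\frac{n}{\alpha}$. The pairing of $L^{p,1}(\mu)$ with $L^{p',\infty}(\mu)$ gives
\begin{align*}
\int_{{\mathbb R}^n}\frac{|u(x)|}{|x-a|^\alpha}\,d\mu(x)
\lesssim \|u\|_{L^{p,1}(\mu)}
\left\||\cdot-a|^{-\alpha}\right\|_{L^{\frac{n}{\alpha},\infty}(\mu)}.
\end{align*}
This is immediate from the Hardy--Littlewood rearrangement inequality $\int|gh|\,d\mu\le\int_0^\infty g^*h^*\,dt$ together with $h^*(t)\le t^{-1/p'}\|h\|_{L^{p',\infty}}$ and $\|g\|_{L^{p,1}}=\int_0^\infty t^{1/p}g^*(t)\,\frac{dt}{t}$, up to the normalization of the Lorentz quasi-norm used in the paper. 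The first factor is bounded by Corollary~\ref{cor:fractional-gradient} by $C_{n,\alpha}\int|\nabla^\alpha u|[M\mu]^{1-\frac{\alpha}{n}}\,dx$, since $\frac1p=1-\frac{\alpha}{n}$.

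It therefore remains to show
\begin{align*}
\left\||\cdot-a|^{-\alpha}\right\|_{L^{\frac{n}{\alpha},\infty}(\mu)}\lesssim [M\mu(a)]^{\frac{\alpha}{n}}.
\end{align*}
This is a direct distribution-function computation. For $\lambda>0$ we have $\{x:|x-a|^{-\alpha}>\lambda\}=B(a,\lambda^{-1/\alpha})$, and so
\begin{align*}
\mu\left(B(a,\lambda^{-1/\alpha})\right)\le M\mu(a)\,|B(a,\lambda^{-1/\alpha})| = \omega_n M\mu(a)\,\lambda^{-\frac{n}{\alpha}},
\end{align*}
where $\omega_n$ is the volume of the unit ball. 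Hence $\sup_\lambda \lambda\,\mu(\{|x-a|^{-\alpha}>\lambda\})^{\alpha/n}\le \omega_n^{\alpha/n}[M\mu(a)]^{\alpha/n}$. Combining this with the previous paragraph gives the claim with a constant depending only on $n$ and $\alpha$.

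There is no serious obstacle here. The only care needed is the Lorentz Hölder inequality with the paper's normalization of $\|\cdot\|_{L^{p,1}(\mu)}$, which only affects constants. Two further points are harmless. The point $a$ may carry an atom of $\mu$, but $M\mu(a)<\infty$ forces $\mu(\{a\})=0$, so the integrand, which is infinite at $a$, causes no trouble. The case $\alpha=1$ is covered in the same way through Corollary~\ref{cor:fractional-gradient} with $\nabla^1 u=\nabla u$.
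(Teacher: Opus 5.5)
Your proposal is correct and follows essentially the same route as the paper. Like the paper, you pair $L^{p,1}(\mu)$ with $L^{n/\alpha,\infty}(\mu)$ via the Lorentz--H\"older inequality, bound the first factor by Corollary~\ref{cor:fractional-gradient}, and bound the second by $\mu(B(a,r))\le\omega_n r^n M\mu(a)$, after noting that $\mu(\{a\})=0$. The paper records this weak norm as exactly $[\omega_n M\mu(a)]^{\alpha/n}$, though your upper bound suffices, and with the paper's normalization of $\|\cdot\|_{L^{p,1}(\mu)}$ the H\"older constant is in fact $1$.
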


\begin{remark}
When $\alpha\in (0,1)$ and $\mu :=dx$, Corollary~\ref{cor:fractional-hardy-pole}
reduces to \cite[Theorem~1.4]{Spector2020}.
\end{remark}

The following result is a consequence of Theorem~\ref{thm:main} with
$\mu:=v\,dx$, where $v$ is a Muckenhoupt $A_1$
weight (see Definition~\ref{def:A1}).

\begin{corollary}\label{cor:A1}
Let $\alpha\in(0,n)$, $p={\frac{n}{n-\alpha}}$, and $v\in A_1$.
Then, for any $f\in C_{\rm c}^\infty({\mathbb R}^n)$,
\begin{align}\label{eq:weight}
\|I_\alpha f\|_{L^{p,1}(v)}
\le C_{n,\alpha}[v]_{A_1}^{\frac{1}{p}}
\int_{{\mathbb R}^n}|Rf(x)|v(x)^{\frac{1}{p}}\,dx.
\end{align}
Moreover, the power $\frac{1}{p}=1-{\frac{\alpha}{n}}$ of $[v]_{A_1}$ is sharp.
\end{corollary}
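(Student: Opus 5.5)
The proof has two parts: the estimate \eqref{eq:weight}, obtained from Theorem~\ref{thm:main}, and the sharpness of the exponent, obtained from an explicit power weight.

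\emph{Upper bound.} Apply Theorem~\ref{thm:main} with $d\mu:=v\,dx$; this is a locally finite Borel measure because $A_1$ weights are locally integrable. It gives
\begin{align*}
\|I_\alpha f\|_{L^{p,1}(v)}\le C_{n,\alpha}\int_{{\mathbb R}^n}|Rf(x)|[Mv(x)]^{\frac1p}\,dx .
\end{align*}
By Definition~\ref{def:A1}, $Mv(x)\le [v]_{A_1}v(x)$ for almost every $x$. Raising this to the power $\frac1p$ gives $[Mv]^{\frac1p}\le [v]_{A_1}^{\frac1p}v^{\frac1p}$ almost everywhere, and \eqref{eq:weight} follows. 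The only thing to check is that the paper's $A_1$ constant is defined through the uncentered (or ball) maximal function in a way comparable to the centered $M$ used here. Any such comparison costs only a dimensional constant, which is absorbed into $C_{n,\alpha}$.

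\emph{Sharpness.} I would use the power weights $v_\delta(x):=|x|^{-n(1-\delta)}$ with $\delta\in(0,1)$ and $\delta\to0$. For these weights $[v_\delta]_{A_1}\sim\delta^{-1}$. The goal is to show that the optimal constant in \eqref{eq:weight} for $v_\delta$ is at least $c\,\delta^{-1/p}$. Since the right-hand side of \eqref{eq:weight} involves $Rf$, I would not test with a fixed $f$ directly. Instead, fix a radial, nonnegative $g\in C_{\rm c}^\infty$ supported in the annulus $\{1<|x|<2\}$, and set $f:=(-\Delta)^{1/2}g$, so that $Rf=\nabla g$ and $I_\alpha f=I_{\alpha-1}g$. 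For $\alpha>1$, $I_{\alpha-1}g$ is a genuine Riesz potential; for $\alpha\le1$, it is interpreted as $(-\Delta)^{(1-\alpha)/2}g$. This $f$ is not compactly supported, so a density argument is needed. I expect the argument to run more cleanly through Corollary~\ref{cor:fractional-gradient}-type reasoning: the test function $u:=I_{\alpha-1}g$, equivalently $u=I_\alpha f$, should satisfy $|u(x)|\gtrsim |x|^{-(n-\alpha+1)}$ for large $|x|$ when $\alpha>1$. The main obstacle is producing a concrete test function with quantitative decay under the $R$-side norm that is uniform in $\delta$. My first try is different: I would pick $f$ itself with $\int f\neq0$, so that $I_\alpha f(x)\gtrsim |x|^{\alpha-n}$ for $|x|\ge R_0$, while $Rf\in L^1$ decays like $|x|^{-n}$. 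Then the right-hand side involves
\begin{align*}
\int|Rf|\,v_\delta^{\frac1p},
\end{align*}
which is bounded uniformly as $\delta\to0$ near infinity provided $Rf$ decays fast enough. If it does not, I would replace $f$ by a function with vanishing mean and compute the decay more carefully.

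On the left-hand side, with $u\gtrsim |x|^{\alpha-n}$ on $\{|x|>R_0\}$, the plan is to show that $\|u\|_{L^{p,1}(v_\delta)}\gtrsim \delta^{-1/p}\cdot\delta^{-?}$ and to compare this with the right-hand side. To do so I would compute the distribution function $v_\delta(\{|x|\le r\})\sim\delta^{-1}r^{n\delta}$ and choose truncations so that the ratio of the two sides blows up exactly like $\delta^{-1/p}$. Working out the correct $\delta$-powers for the two sides is the step I expect to require the most care. If power weights near the origin do not give the sharp rate, I would switch to $v_\delta(x):=|x|^{-n(1-\delta)}$ tested against $f$ concentrated near the origin at scale $1$, where $v_\delta$ mass is $\sim\delta^{-1}$. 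There $\|u\|_{L^{p,1}(v_\delta)}\gtrsim \delta^{-1/p}$ from $u\gtrsim1$ on $B(0,1)$, while the right-hand side is $O(1)$ since $v_\delta^{1/p}\le |x|^{-n/p}$ is integrable against a bounded $|Rf|$ near $0$. This gives the sharp ratio $\delta^{-1/p}\sim[v_\delta]_{A_1}^{1/p}$ and is the version I would pursue first.
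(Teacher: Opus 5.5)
Your upper bound is the paper's argument, and the sharpness argument you say you would pursue first is also the paper's. It uses power weights $|x|^{-n(1-\delta)}$ with $[v_\delta]_{A_1}\sim\delta^{-1}$ and $v_\delta(B({\bf 0},1))\sim\delta^{-1}$, and a fixed nonnegative $f\in C_{\rm c}^\infty(\mathbb{R}^n)$, so that $I_\alpha f\gtrsim 1$ on $B({\bf 0},1)$ and the right-hand side stays bounded uniformly in $\delta$. The one step you leave unchecked is $|x|\ge1$, where $v_\delta^{1/p}$ is not dominated by $|x|^{-n/p}$. There the standard decay $|Rf(x)|\lesssim(1+|x|)^{-n}$ (no vanishing-mean modification needed) gives $\int_{|x|\ge1}|x|^{-n-(n-\alpha)(1-\delta)}\,dx\lesssim 1$ uniformly for $\delta\le\frac12$, exactly as in the paper.
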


We now outline the proof of Theorem~\ref{thm:main}.
The proof relies on Lorentz estimates for fractional differences
and Riesz potentials of characteristic functions of sets of
finite perimeter, established in Propositions~\ref{pro:11}
and \ref{pro:22}, respectively.
The main difficulty is to control the contributions from
different scales by a single weighted perimeter integral.
To this end, we first derive pointwise dyadic estimates
and group the resulting cubes according to
$m_Q$ [see \eqref{eq:mQ}], which is defined to be
the smaller
of the densities of a set and its complement.
The relative isoperimetric inequality (see Lemma~\ref{lem:iso}) provides a summable
factor in the density parameter, while a nonlinear counting
estimate controls the sum over scales within each density layer.
Applying the resulting characteristic-function estimates
to the level sets of $I_1f$, together with the
layer-cake representation and the weighted coarea formula
(see Lemma~\ref{lem:coarea}), we then obtain the main theorem.

The remainder of this article is organized as follows.
In Section~\ref{sec:preliminaries}, we recall the definitions
and preliminary results used throughout the article.
In Section~\ref{sec:stopping}, we establish pointwise dyadic
estimates and a nonlinear counting estimate, and use them
to prove the fractional difference estimate in
Proposition~\ref{pro:11}.
In Section~\ref{sec:alpha-greater-one}, we prove the Riesz
potential estimate in Proposition~\ref{pro:22}.
Finally, Section~\ref{sec:proofs} is devoted to the proofs
of Theorem~\ref{thm:main} and its corollaries.

We conclude this introduction with some notational conventions.
Throughout the article, let $\mathbb{N}:=\{1,2,\ldots\}$.
If $E$ is a subset of ${\mathbb{R}^n}$, we denote by
${\bf 1}_E$ its characteristic function.
We use ${\bf 0}$ to denote the origin of ${\mathbb{R}^n}$ and
$\mathbb{S}^{n-1}$ to denote the unit sphere in ${\mathbb{R}^n}$.
For any $x\in {\mathbb{R}^n}$ and $r\in (0,\infty)$, define
$B(x,r):=\{y\in {\mathbb{R}^n}:|x-y|<r\}$.
For any cube $Q$ in
$\mathbb{R}^n$, $\ell (Q)$ denotes its edge length and
$Q^\circ$ its \emph{interior}.
We denote by $\mathcal H^{n-1}$ the $(n-1)$-dimensional Hausdorff measure.
For any measurable function $f$ on $\mathbb{R}^n$, its support is denoted
by $\operatorname{supp}(f)$.
We use $C_{\rm c}^{\infty}(\mathbb{R}^n)$ to denote the space of all infinitely
differentiable functions on $\mathbb{R}^n$ with compact support.
For any $k\in \mathbb{N}$ and $p\in [1,\infty)$, we use
$W^{k,p}_{\rm loc}(\mathbb{R}^n)$ to denote the set of all
$f\in L^p_{\rm loc}(\mathbb{R}^n)$ (the set of all locally $p$-integrable functions
on $\mathbb{R}^n$) such
that, for any bounded open set $U\subset \mathbb{R}^n$, $f\in W^{k,p}(U)$.
In addition, $C$ denotes a positive constant that is independent of the
main parameters involved, but may vary from line to line, whereas
$C_{\alpha,\dots}$ denotes a positive constant depending on the indicated
parameters $\alpha,\dots$.
The notation $f\lesssim g$ means $f\leq Cg$ and, if
$f\lesssim g\lesssim f$, then we write $f\sim g$.
Finally, throughout the proofs we retain the notation introduced in the
corresponding theorem or related statement.

\section{Preliminaries}\label{sec:preliminaries}

In this section,
we
present some necessary definitions and tools.
We begin with the definition of Lorentz spaces
(see \cite{GrafakosCFA}).

\begin{definition}
Let $p\in[1,\infty)$, $q\in(0,\infty]$, and $\mu$ be a
locally finite nonnegative Borel measure on ${\mathbb R}^n$.
The \emph{Lorentz space} $L^{p,q}(\mu)$ is defined to be
the set of all $\mu$-measurable functions $f$ on ${\mathbb R}^n$ such that
$\|f\|_{L^{p,q}(\mu)}<\infty$, where, for $q\in(0,\infty)$,
\begin{align}\label{eq:Lorentz-distribution}
\|f\|_{L^{p,q}(\mu)}
:=
\left[
p\int_0^\infty
t^q\mu(\{x\in{\mathbb R}^n:|f(x)|>t\})^{\frac{q}{p}}
\frac{dt}{t}
\right]^{\frac{1}{q}},
\end{align}
and, for $q=\infty$,
\begin{align}\label{eq:weak-Lorentz-distribution}
\|f\|_{L^{p,\infty}(\mu)}
:=
\sup_{t\in(0,\infty)}
t\mu(\{x\in{\mathbb R}^n:|f(x)|>t\})^{\frac{1}{p}}.
\end{align}
In particular, $L^{p,p}(\mu)=L^p(\mu)$ with equality of norms,
and $L^{p,\infty}(\mu)$ is also called the
\emph{weak Lebesgue space}.
\end{definition}

We recall the definitions of locally bounded variation and finite perimeter;
see \cite[Definition~5.2]{EvansGariepy2015}.
\begin{definition}
A function $u\in L^1_{\mathrm{loc}}({\mathbb R}^n)$ is said
to have \emph{locally bounded variation} if, for any bounded open set
$U\subset{\mathbb R}^n$,
\begin{align*}
\sup
\left\{
\int_U u(x)\operatorname{div}\Phi(x)\,dx:
\Phi\in C_{\rm c}^1(U;{\mathbb R}^n),\
\|\Phi\|_{L^\infty(U;{\mathbb R}^n)}\le1\right\}
<\infty.
\end{align*}
Let
$BV_{\mathrm{loc}}({\mathbb R}^n)$ denote the space of such functions.
Here, $C_{\rm c}^1(U;{\mathbb R}^n)$ denotes the set of all
 differentiable functions from $U$ to ${\mathbb R}^n$
with compact support in $U$.
\end{definition}

We recall the notions of distributional and weak gradients;
see \cite[Sections~4.1 and~5.1]{EvansGariepy2015}.

\begin{definition}\label{def:weakgradient}
Let $u\in L^1_{\rm loc}({\mathbb R}^n)$.
The \emph{distributional gradient} $Du$ of $u$ is the
vector-valued distribution defined by
\begin{align*}
\langle Du,\Phi\rangle
:=-\int_{{\mathbb R}^n}u(x)\operatorname{div}\Phi(x)\,dx,
\quad \forall\,\Phi\in C_{\rm c}^\infty({\mathbb R}^n;{\mathbb R}^n).
\end{align*}
Here, $C_{\rm c}^\infty({\mathbb R}^n;{\mathbb R}^n)$ denotes the set of all
infinitely differentiable functions from ${\mathbb R}^n$ to ${\mathbb R}^n$
with compact support.
If there exists $g\in L^1_{\rm loc}({\mathbb R}^n;{\mathbb R}^n)$ such that, for any
$\Phi\in C_{\rm c}^\infty({\mathbb R}^n;{\mathbb R}^n)$,
\begin{align*}
\langle Du,\Phi\rangle
=\int_{{\mathbb R}^n}g(x)\cdot\Phi(x)\,dx,
\end{align*}
then $g$ is called the \emph{weak gradient} of $u$
and is denoted by $\nabla u$.
\end{definition}

If $u\in W^{1,1}_{\rm loc}({\mathbb R}^n)$,
then $Du=\nabla u$, and we identify the distributional
gradient $Du$ with the weak gradient $\nabla u$.
By \cite[Theorem~5.1]{EvansGariepy2015}, we find that,
if $u\in BV_{\mathrm{loc}}({\mathbb R}^n)$, then its distributional gradient
$Du$ is an ${\mathbb R}^n$-valued Radon measure
with locally finite total variation.
We denote the \emph{total variation measure} of $Du$ by $|Du|$.

\begin{definition}
A Lebesgue measurable set $E\subset{\mathbb R}^n$ is said to have
\emph{locally finite perimeter} if
${\mathbf 1}_E\in BV_{\rm loc}({\mathbb R}^n)$.
For such a set and any open set $\Omega\subset{\mathbb R}^n$,
the \emph{perimeter of $E$ in $\Omega$} is defined by
\begin{align*}
P(E;\Omega):=|D{\mathbf 1}_E|(\Omega).
\end{align*}
We write $P(E):=P(E;{\mathbb R}^n)$ and say that $E$ has
\emph{finite perimeter} if $P(E)<\infty$.
\end{definition}

The reduced boundary identifies the part of the boundary that carries
the perimeter measure; see \cite[Definition~5.4]{EvansGariepy2015}.
\begin{definition}
Let $E\subset{\mathbb R}^n$ be a set of locally finite perimeter.
The \emph{reduced boundary} of $E$, denoted by $\partial^*E$,
is the set of all points $x\in{\mathbb R}^n$ satisfying the following conditions:
\begin{enumerate}[{\rm(i)}]
\item For any $r\in(0,\infty)$,
$|D{\mathbf 1}_E|(B(x,r))>0.$
\item The limit
$\nu_E(x):=
-\lim_{r\to0^+}
\frac{D{\mathbf 1}_E(B(x,r))}{|D{\mathbf 1}_E|(B(x,r))}$
exists.
\item $|\nu_E(x)|=1$.
\end{enumerate}
Here, and thereafer, the \emph{notation $r\to0^+$} means that
there exists a constant $c_0\in (0,1)$ such that $r\in(0,c_0)$ and $r\to0$.
\end{definition}

The following lemma is the relative isoperimetric inequality (see \cite[Theorem~5.11]{EvansGariepy2015} and
\cite[Lemma~2.2]{MyyrylainenPerezWeigt2024}).
\begin{lemma}\label{lem:iso}
Let $Q\subset{\mathbb R}^n$ be a cube and $E\subset{\mathbb R}^n$ be a set of finite perimeter.
Then there exists a positive constant $C_n$, depending only on
$n$, such that
\begin{align}\label{eq:reiso}
\min\{|E\cap Q|,|Q\setminus E|\}^{\frac{n-1}{n}}
\le C_n {\mathcal H}^{n-1}(\partial^*E\cap Q^\circ),
\end{align}
where $Q^\circ$ denotes the interior of $Q$.
\end{lemma}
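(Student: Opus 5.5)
The statement to prove is Lemma~\ref{lem:iso}, the relative isoperimetric inequality in a cube. I would derive it from the Poincar\'e inequality for $BV$ functions on a cube, combined with scaling.

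\emph{Step 1: reduction.} By translation and dilation invariance, assume $Q=[0,1]^n$. Under $x\mapsto \lambda x$, the quantity $\min\{|E\cap Q|,|Q\setminus E|\}^{\frac{n-1}{n}}$ scales like $\lambda^{n-1}$. The quantity $\mathcal H^{n-1}(\partial^*E\cap Q^\circ)$ scales the same way. Hence the constant $C_n$ is scale-free. We may also replace $\mathcal H^{n-1}(\partial^*E\cap Q^\circ)$ by $P(E;Q^\circ)=|D\mathbf 1_E|(Q^\circ)$, by De Giorgi's structure theorem $|D\mathbf 1_E|=\mathcal H^{n-1}\lfloor\partial^*E$. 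This is \cite[Theorem~5.15]{EvansGariepy2015}, which we take as known.

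\emph{Step 2: Sobolev--Poincar\'e inequality on the cube.} Set $u:=\mathbf 1_E|_{Q^\circ}\in BV(Q^\circ)$. The cube is a bounded Lipschitz (extension) domain, so
\begin{align*}
\|u-u_Q\|_{L^{\frac{n}{n-1}}(Q)}\le C_n|Du|(Q^\circ),
\end{align*}
where $u_Q$ is the average of $u$ over $Q$. This follows by approximating $u$ by smooth functions $u_k$ with $u_k\to u$ in $L^1(Q)$ and $|Du_k|(Q^\circ)\to|Du|(Q^\circ)$. Then we apply the $W^{1,1}$ Poincar\'e--Sobolev inequality on the cube and use lower semicontinuity. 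The $W^{1,1}$ version itself comes from Rellich compactness plus a contradiction argument for the $L^1$ Poincar\'e inequality. Gagliardo--Nirenberg--Sobolev applied to an extension then upgrades it to $L^{\frac{n}{n-1}}$.

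\emph{Step 3: computing the left side.} Write $a:=|E\cap Q|$ and $b:=|Q\setminus E|$, so $a+b=1$ and $u_Q=a$. Then
\begin{align*}
\|u-u_Q\|_{L^{\frac{n}{n-1}}(Q)}^{\frac{n}{n-1}}
=a\,b^{\frac{n}{n-1}}+b\,a^{\frac{n}{n-1}}
\ge \tfrac12\min\{a,b\}.
\end{align*}
The inequality holds because $\max\{a,b\}\ge\frac12$: if, say, $a\le b$, the first term is at least $a\,(\tfrac12)^{\frac{n}{n-1}}$, and the symmetric bound holds when $b\le a$. Raising to the power $\frac{n-1}{n}$ gives $\min\{a,b\}^{\frac{n-1}{n}}\lesssim_n P(E;Q^\circ)$, which is \eqref{eq:reiso}.

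\emph{Main obstacle.} The only genuinely nontrivial ingredient is Step 2, the Poincar\'e inequality with the sharp Sobolev exponent for $BV$ on the cube. Since Lemma~\ref{lem:iso} is cited from \cite[Theorem~5.11]{EvansGariepy2015}, I would simply invoke that reference for Step 2 together with the structure theorem. The scaling and arithmetic steps are routine.
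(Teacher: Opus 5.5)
Your proposal is correct and is the standard argument behind the paper's citation; the paper gives no proof of its own, only \cite[Theorem~5.11]{EvansGariepy2015} and \cite[Lemma~2.2]{MyyrylainenPerezWeigt2024}, and you combine the $BV$ Poincar\'e--Sobolev inequality applied to $\mathbf 1_E$ on the cube with $|D\mathbf 1_E|(Q^\circ)=\mathcal H^{n-1}(\partial^*E\cap Q^\circ)$, where your Step~2 supplies the cube version that Evans--Gariepy state only for balls. The only slip is in Step~3: your justification yields $a b^{\frac{n}{n-1}}+b a^{\frac{n}{n-1}}\ge 2^{-\frac{n}{n-1}}\min\{a,b\}$ rather than the stated $\frac12\min\{a,b\}$; the latter also holds via $a^{\frac{1}{n-1}}+b^{\frac{1}{n-1}}\ge (a+b)^{\frac{1}{n-1}}=1$, and in any case any dimensional constant suffices.
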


The following weighted coarea formula can be deduced
from the measure-valued coarea formula (see, for instance, \cite[Theorem~3.40]{AFP2000}) and
the monotone convergence theorem; we omit the details.
\begin{lemma}\label{lem:coarea}
Let $u\in W^{1,1}_{\rm loc}({\mathbb R}^n)$,
and let $w$ be a nonnegative Borel measurable function
on ${\mathbb R}^n$.
Then the superlevel set
$\{u>t\}:=\{x\in{\mathbb R}^n:u(x)>t\}$
has locally finite perimeter for almost every $t\in{\mathbb R}$,
and
\begin{align*}
\int_{{\mathbb R}^n}|\nabla u(x)|w(x)\,dx
=
\int_{{\mathbb R}}\int_{\partial^*\{u>t\}}
w\,d{\mathcal H}^{n-1}\,dt.
\end{align*}
\end{lemma}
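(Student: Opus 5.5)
The plan is to reduce the statement to the classical measure-valued coarea formula for $BV_{\rm loc}$ functions and then pass from indicator functions of Borel sets to general nonnegative Borel weights $w$ by linearity and monotone convergence. Since $u\in W^{1,1}_{\rm loc}({\mathbb R}^n)\subset BV_{\rm loc}({\mathbb R}^n)$, we invoke \cite[Theorem~3.40]{AFP2000}. It gives a Lebesgue null set $N\subset{\mathbb R}$ such that, for any $t\notin N$, the set $E_t:=\{u>t\}$ has locally finite perimeter. It also gives that, for any Borel set $B\subset{\mathbb R}^n$, the map $t\mapsto |D{\mathbf 1}_{E_t}|(B)$ is Lebesgue measurable and
\begin{align*}
|Du|(B)=\int_{{\mathbb R}}|D{\mathbf 1}_{E_t}|(B)\,dt .
\end{align*}
The null set $N$ does not depend on $B$, which is what lets us later integrate arbitrary $w$ against a single family of measures.

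Next we identify both sides with the objects in the statement. On the left, because $u\in W^{1,1}_{\rm loc}({\mathbb R}^n)$ we have $Du=\nabla u\,dx$, and hence $|Du|(B)=\int_B|\nabla u(x)|\,dx$. On the right, for $t\notin N$ we use De Giorgi's structure theorem (see \cite[Theorem~5.15]{EvansGariepy2015}): $|D{\mathbf 1}_{E_t}|={\mathcal H}^{n-1}\lfloor\partial^*E_t$. This gives, for every Borel set $B$,
\begin{align*}
\int_B|\nabla u(x)|\,dx=\int_{{\mathbb R}}{\mathcal H}^{n-1}(\partial^*E_t\cap B)\,dt .
\end{align*}
If the cited theorem is stated only for bounded $B$, or only on bounded open sets $\Omega$, we apply it to $B\cap B({\bf 0},k)$ and let $k\to\infty$ by monotone convergence on both sides. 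This proves the lemma for $w={\mathbf 1}_B$.

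By linearity, the identity then holds for all nonnegative Borel simple functions. The measurability in $t$ of $\int_{\partial^*E_t}w\,d{\mathcal H}^{n-1}$ is inherited from the indicator case. For a general nonnegative Borel $w$, we choose Borel simple functions $0\le s_k\uparrow w$ pointwise. The monotone convergence theorem applied inside, with respect to ${\mathcal H}^{n-1}\lfloor\partial^*E_t$ for each fixed $t\notin N$, shows that $\int_{\partial^*E_t}s_k\,d{\mathcal H}^{n-1}$ increases to $\int_{\partial^*E_t}w\,d{\mathcal H}^{n-1}$. Hence the limit is a measurable function of $t$. Applying monotone convergence again in $t$, and once more on the left for $\int|\nabla u|s_k\,dx$, yields the claimed formula, with both sides possibly equal to $\infty$.

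The only genuinely delicate point is the identification $|D{\mathbf 1}_{E_t}|={\mathcal H}^{n-1}\lfloor\partial^*E_t$, together with the requirement that the exceptional null set of $t$ be chosen uniformly in $B$. Both are supplied by the cited structure and coarea theorems, since the local finiteness of perimeter is established once, for a.e. $t$, before any Borel set is fixed. Everything else is routine measure theory.
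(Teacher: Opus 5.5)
Your proposal is correct and takes the route the paper indicates: the measure-valued coarea formula \cite[Theorem~3.40]{AFP2000} together with monotone convergence. You supply the details the paper omits, namely the identification $|D{\mathbf 1}_{E_t}|={\mathcal H}^{n-1}\lfloor\partial^*E_t$ via De Giorgi's structure theorem, the localization to the balls $B({\bf 0},k)$, and the passage from indicators to simple functions to general nonnegative Borel weights. The one thing to state explicitly is that, when you localize, the exceptional null set of levels depends on $k$, so you should take $N:=\bigcup_{k}N_k$, which is still Lebesgue null.
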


For any $\nu\in\{0,\frac{1}{3},\frac{2}{3}\}^n$,
recall that the \emph{shifted
dyadic grid} $\mathscr{D}^\nu$ is defined by setting
\begin{align*}
\mathscr{D}^\nu:=\left\{2^j\left[m+[0,1)^n+(-1)^j\nu\right]:
j\in\mathbb{Z},\ m\in\mathbb{Z}^n\right\}.
\end{align*}
The cubes are half-open; in
particular, cubes of the same edge length in a fixed grid are disjoint.
The adjacent-grid property allows ball estimates to be
represented in this finite collection of grids; see, for instance,
\cite[p.\,479]{MTT02}.

\begin{lemma}\label{lemCub}
The shifted dyadic grids $\{\mathscr{D}^\nu\}_{\nu\in\{0,\frac{1}{3},\frac{2}{3}\}^n}$
have the following properties.
\begin{enumerate}[{\rm(i)}]
\item
For any $Q,P\in\mathscr{D}^\nu$ with
$\nu\in\{0,\frac{1}{3},\frac{2}{3}\}^n$,
$Q\cap P\in\{\emptyset,Q,P\}$.
\item
For any ball $B(x,r)\subset\mathbb{R}^n$, there exist an index
$\nu\in\{0,\frac{1}{3},\frac{2}{3}\}^n$ and a cube
$Q\in{\mathscr D}^{\nu}$ such that
$
B(x,r)\subset Q$ and
$\ell(Q)\leq C_n r$,
where $C_n$ is a positive constant depending only on $n$.
\end{enumerate}
\end{lemma}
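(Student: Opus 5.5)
Both parts reduce to elementary lattice bookkeeping in each coordinate. We do part (i) first, then part (ii).

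For (i), fix $\nu\in\{0,\frac13,\frac23\}^n$. Cubes of the same generation $j$ are the translates of $2^j[0,1)^n$ by the lattice $\Lambda_j:=2^j\mathbb Z^n+2^j(-1)^j\nu$. Since they are half-open, they are pairwise disjoint. The key step is to show that every cube of generation $j+1$ is a union of cubes of generation $j$, which amounts to showing $\Lambda_{j+1}\subset\Lambda_j$. Indeed,
\begin{align*}
2^{j+1}(-1)^{j+1}\nu-2^j(-1)^j\nu=-3\cdot2^j(-1)^j\nu\in2^j\mathbb Z^n,
\end{align*}
because $3\nu\in\mathbb Z^n$. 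Also $2^{j+1}\mathbb Z^n\subset2^j\mathbb Z^n$, so $\Lambda_{j+1}\subset\Lambda_j$. Consequently every generation-$(j+1)$ cube is the disjoint union of $2^n$ generation-$j$ cubes, and by induction the same holds for any two generations $k>j$. Now take $Q,P\in\mathscr D^\nu$ with $\ell(P)\le\ell(Q)$. Then $Q$ is a union of cubes of the generation of $P$, and these are either equal to $P$ or disjoint from it. Hence $Q\cap P\in\{\emptyset,P\}$ in that case, which gives (i).

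For (ii), we argue coordinate by coordinate (the one-third trick). Given $B(x,r)$, choose $j\in\mathbb Z$ with $6r\le2^j<12r$. For each $i$, consider the open interval $I_i:=(x_i-r,x_i+r)$, whose length $2r$ is at most $2^j/3$. The endpoint sets of generation-$j$ one-dimensional intervals for the three shifts are $2^j\mathbb Z+2^j(-1)^j s$ with $s\in\{0,\frac13,\frac23\}$. These are three distinct residue classes modulo $2^j$ (the sign $(-1)^j$ only permutes them), and their union is $\frac{2^j}{3}\mathbb Z$. An open interval of length at most the spacing $2^j/3$ contains at most one point of $\frac{2^j}{3}\mathbb Z$, and that point belongs to only one of the three classes. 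So we can pick $\nu_i\in\{0,\frac13,\frac23\}$ such that $I_i$ contains no endpoint of the corresponding grid. Then $I_i$ lies inside a single interval $2^j[m_i+[0,1)+(-1)^j\nu_i]$. Setting $\nu:=(\nu_1,\dots,\nu_n)$ and $m:=(m_1,\dots,m_n)$, we get
\begin{align*}
B(x,r)\subset\prod_{i=1}^n I_i\subset Q:=2^j\left[m+[0,1)^n+(-1)^j\nu\right]\in\mathscr D^\nu,
\end{align*}
with $\ell(Q)=2^j<12r$. This gives (ii) with $C_n=12$.

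The only delicate point is the counting in (ii): one must check that an open interval of length at most $2^j/3$ hits at most one point of $\frac{2^j}{3}\mathbb Z$, and that the sign $(-1)^j$ does not collapse the three residue classes. Both checks are immediate.
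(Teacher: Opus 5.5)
Your proof is correct: nestedness follows from $3\nu\in\mathbb Z^n$ exactly as you compute, and the coordinatewise one-third trick gives (ii) with the dimension-free constant $C_n=12$. The paper does not prove this lemma itself but cites Muscalu--Tao--Thiele \cite[p.\,479]{MTT02}, and your argument is the standard proof behind that reference.
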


Finally, we recall the weight class used in
Corollary~\ref{cor:A1};
see \cite[Definition~7.1.1]{GrafakosCFA}.
\begin{definition}\label{def:A1}
Let $v$ be a locally integrable function on ${\mathbb R}^n$ such that
$v(x)\in (0,\infty)$ for almost every $x\in{\mathbb R}^n$.
We say that $v$ belongs to the \emph{Muckenhoupt class $A_1$} if
\begin{align*}
[v]_{A_1}
:=
\sup_{\text{cube}\, Q \subset {\mathbb R}^n}
\left[\frac{1}{|Q|}\int_{Q}v(x)\,dx\right]\left\|v^{-1}\right\|_{L^\infty (Q)}
<\infty.
\end{align*}
\end{definition}

\section{Fractional Difference Estimates for Characteristic Functions}
\label{sec:stopping}

In this section, we establish a Lorentz estimate for fractional
differences of characteristic functions. We begin by introducing the notation
needed to state the result.

For any $s\in(0,1)$, $u\in\mathscr{M}({\mathbb R}^n)$,
and $x\in{\mathbb R}^n$, define
\begin{align*}
\mathcal D^su(x):=
\int_{{\mathbb R}^n}\frac{|u(x)-u(y)|}{|x-y|^{n+s}}\,dy,
\end{align*}
where the integral is understood in the extended nonnegative sense.
This quantity was used by Spector \cite{Spector2020}
in the study of endpoint Lorentz estimates.
For convenience, we let $\mathcal D^0u:=u$.
For any Lebesgue measurable set $E\subset{\mathbb R}^n$, define
\begin{align}\label{eq:E111}
E^1
:=
\left\{
x\in{\mathbb R}^n:
\lim_{r\to0^+}
\frac{|E\cap B(x,r)|}{|B(x,r)|}=1
\right\}
\end{align}
and
\begin{align}\label{eq:E000}
E^0
:=
\left\{
x\in{\mathbb R}^n:
\lim_{r\to0^+}
\frac{|E\cap B(x,r)|}{|B(x,r)|}=0
\right\}.
\end{align}
The \emph{essential boundary} of $E$ is defined as
$\partial^eE:={\mathbb R}^n\setminus(E^0\cup E^1).$

The main result of this section is as follows.

\begin{proposition}\label{pro:11}
Let $s\in[0,1)$ and $p=\frac{n}{n-1+s}$.
Let $E\subset{\mathbb R}^n$ have finite perimeter and finite Lebesgue measure,
and let $\mu$ be a locally finite nonnegative Borel measure on ${\mathbb R}^n$.
Then
\begin{align}\label{eq:characteristic}
\|\mathcal D^s{\mathbf 1}_{E^1}\|_{L^{p,1}(\mu)}
\le C_{n,s}\int_{\partial^*E}(M\mu)^{\frac{1}{p}}\,d{\mathcal H}^{n-1},
\end{align}
where
$E^1$ is as in \eqref{eq:E111} and where
$C_{n,s}$ is a positive constant depending only on $n$ and $s$.
In particular, when $s=0$,
\begin{align}\label{eq:endpoint-iso}
\mu(E^1)^{\frac{n-1}{n}}
\le C_n\int_{\partial^*E}(M\mu)^{\frac{n-1}{n}}\,d{\mathcal H}^{n-1}.
\end{align}
Moreover, if $s\in(0,1)$ and the right-hand side of
\eqref{eq:characteristic} is finite, then $\mu(\partial^eE)=0$.
\end{proposition}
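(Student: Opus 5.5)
We follow the outline in the introduction: first a pointwise dyadic bound for $\mathcal D^s{\mathbf 1}_{E^1}$, then a grouping of cubes by density, then summation using Lemma~\ref{lem:iso}.

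\textbf{Step 1: pointwise dyadic bound.} Fix $x\in{\mathbb R}^n$. Since $|E\,\triangle\, E^1|=0$, we may write $u:={\mathbf 1}_{E^1}$ and decompose ${\mathbb R}^n$ into the annuli $B(x,2^{-j+1})\setminus B(x,2^{-j})$. On each annulus the integrand of $\mathcal D^su(x)$ is at most $2^{j(n+s)}|u(x)-u(y)|$. Cover $B(x,2^{-j+1})$ by a cube $Q\in\mathscr D^\nu$ with $\ell(Q)\sim 2^{-j}$, using Lemma~\ref{lemCub}(ii). Set
\begin{align}\label{eq:mQ}
m_Q:=\min\{|E\cap Q|,|Q\setminus E|\}/|Q|.
\end{align}

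For $x\in E^1$, the contribution of the annulus is $\lesssim\ell(Q)^{-s}|Q\setminus E|/|Q|$; for $x\notin E^1$ it is $\lesssim\ell(Q)^{-s}|Q\cap E|/|Q|$. In either case one of the two fractions is $m_Q$. If the other one is the relevant fraction, then the density of $E$ in $Q$ is at least $1/2$ at some scale while $x$ has density near $0$ (or the reverse). Following the density from large scales down to $x$, there must be an intermediate cube $Q'\ni x$ with $m_{Q'}\gtrsim 1$. So the bad cubes are charged to the first cube, at a comparable or smaller scale, where the density crosses $1/2$. The outcome is the pointwise bound
\begin{align*}
\mathcal D^su(x)\lesssim\sum_{\nu}\sum_{Q\in\mathscr D^\nu,\,Q\ni x}\ell(Q)^{-s}\,\widetilde m_Q,
\end{align*}
where $\widetilde m_Q$ is $m_Q$ or a maximal-type density controlled by it. For $s=0$ we instead just use ${\mathbf 1}_{E^1}(x)\le$ the indicator of a stopping cube, namely the maximal cube containing $x$ with $|Q\cap E|\ge|Q|/2$. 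Finiteness of $|E|$ ensures that large cubes have small density, which keeps the sum finite.

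\textbf{Step 2: layers and counting.} Group the cubes into layers $\mathcal Q_k:=\{Q: m_Q\in(2^{-k-1},2^{-k}]\}$. By subadditivity of the $L^{p,1}$ quasinorm (up to a constant, since $L^{p,1}$ is normable for $p>1$, and trivially for $p=1$ when $s=0$), it suffices to estimate each layer. Within a layer, the function $\sum_{Q\in\mathcal Q_k,Q\ni x}\ell(Q)^{-s}2^{-k}$ is dominated by a geometric sum. This sum is controlled by the smallest cube of $\mathcal Q_k$ containing $x$, at least after passing to maximal subfamilies across scales; this is the nonlinear counting estimate. Its superlevel sets are unions of cubes, so
\begin{align*}
\Bigl\|\sum_{Q\in\mathcal Q_k}\ell(Q)^{-s}{\mathbf 1}_Q\Bigr\|_{L^{p,1}(\mu)}\lesssim\sum_{Q\in\mathcal Q_k}\ell(Q)^{-s}\mu(Q)^{1/p}.
\end{align*}
Naively each scale contributes one such sum, so we must show that the number of scales at which $x$ lies in a $\mathcal Q_k$-cube contributes at most a polynomial factor in $k$.

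\textbf{Step 3: isoperimetry.} Since $1/p=(n-1+s)/n$, we have
\begin{align*}
\ell(Q)^{-s}\mu(Q)^{1/p}\le \ell(Q)^{-s}\Bigl(\inf_{Q}M\mu\Bigr)^{1/p}|Q|^{1/p}\sim \Bigl(\inf_QM\mu\Bigr)^{1/p}\ell(Q)^{n-1}.
\end{align*}
By Lemma~\ref{lem:iso}, $\ell(Q)^{n-1}\lesssim m_Q^{-(n-1)/n}\mathcal H^{n-1}(\partial^*E\cap Q^\circ)$. Combining these,
\begin{align*}
2^{-k}\ell(Q)^{-s}\mu(Q)^{1/p}\lesssim 2^{-k/n}\int_{\partial^*E\cap Q^\circ}(M\mu)^{1/p}\,d\mathcal H^{n-1}.
\end{align*}
The factor $2^{-k/n}$ is summable in $k$ even after the polynomial-in-$k$ overlap loss. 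That overlap comes from a point of $\partial^*E$ belonging to cubes of $\mathcal Q_k$ at many scales, and it is the main obstacle. To handle it, we plan to show that along the chain of dyadic ancestors of a point, cubes with density $m_Q\sim 2^{-k}$ occur at only $O(k)$ consecutive scales before the density must change by a factor $2$, using the rough doubling $|Q\cap E|\le 2^n|Q_{\rm child}\cap E|$. Alternatively, we can select maximal disjoint cubes in each layer and scale, and absorb the remaining ones geometrically via the factor $\ell(Q)^{-s}$ or $m_Q$. This gives \eqref{eq:characteristic}, and \eqref{eq:endpoint-iso} follows since $\|{\mathbf 1}_{E^1}\|_{L^{p,1}(\mu)}\sim\mu(E^1)^{1/p}$.

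\textbf{Final claim.} For $s\in(0,1)$ and $x\in\partial^eE$, the density of $E$ in small balls around $x$ does not tend to $0$ or $1$. Hence along a sequence of scales $r\to0$ both $|B(x,r)\cap E|$ and $|B(x,r)\setminus E|$ are $\gtrsim|B(x,r)|$, though this gives lower bounds only along that sequence. We therefore argue via $\liminf/\limsup$: if the densities are bounded away from $\{0,1\}$ along a subsequence, then in between they cross intermediate values at every dyadic step. This gives contributions $\gtrsim r^{-s}$ at infinitely many scales, so $\mathcal D^s{\mathbf 1}_{E^1}(x)=\infty$. A function in $L^{p,1}(\mu)$ is finite $\mu$-almost everywhere, and the left-hand side of \eqref{eq:characteristic} is finite, so $\mu(\partial^eE)=0$.
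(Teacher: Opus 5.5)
Your Step 1 and the single-cube estimate in Step 3 are essentially the paper's Lemmas~\ref{lem:dyadic-reduction}, \ref{lem:last-crossing} and \ref{lem:local-charge}. The summation across scales, however, has a genuine gap, exactly where you locate the main obstacle. Your bound
\[
\Bigl\|\sum_{Q\in\mathcal Q_k}\ell(Q)^{-s}{\mathbf 1}_Q\Bigr\|_{L^{p,1}(\mu)}\lesssim\sum_{Q\in\mathcal Q_k}\ell(Q)^{-s}\mu(Q)^{1/p}
\]
is an $\ell^1$-sum over all cubes of a layer at all scales, and it is infinite in the simplest examples. Take $\mu:=dx$, $E:=(-1,1)^{n-1}\times(-1,0)$, and the grid with $\nu=(0,\ldots,0,\frac13)$.

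\begin{itemize}
\item Every sufficiently small cube of this grid meeting the face $\{x_n=0\}$ (away from its edges) has $n$-th side $2^{j}[-\frac23,\frac13)$ or $2^{j}[-\frac13,\frac23)$. Hence $e_Q\in\{\frac13,\frac23\}$, $m_Q=\frac13$, and all these cubes lie in the same layer.
\item There are $\sim\ell^{1-n}$ of them at scale $\ell$, each contributing $\ell(Q)^{-s}\mu(Q)^{1/p}=\ell(Q)^{n-1}$. So every small scale contributes $\sim1$, and your sum diverges, while the right-hand side of \eqref{eq:characteristic} is finite.
\end{itemize}

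The same example refutes your proposed repair. Along the ancestor chain of a point of that face, $m_Q=\frac13$ at \emph{every} small scale, so there is no $O(k)$ bound on the number of scales per layer. Also, the inequality $|Q\cap E|\le 2^n|Q_{\rm child}\cap E|$ you want to use is false; only $|Q_{\rm child}\cap E|\le|Q\cap E|$ holds. Selecting maximal cubes does not help either: cubes of one scale in one grid are already disjoint, and the $\ell^1$-sum has no decay across scales.

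The missing idea is to aggregate the cubes of a fixed scale in $\ell^p$ rather than $\ell^1$, and then sum over scales by a genuinely nonlinear counting argument.

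\begin{itemize}
\item For disjoint cubes, the superlevel sets of $\sum_Q{\mathbf 1}_Q$ are the whole union or empty. Hence $\|\sum_Q{\mathbf 1}_Q\|_{L^{p,1}(\mu)}\le p(\sum_Q\mu(Q))^{1/p}\lesssim(\sum_Q(h_Qw_Q)^p)^{1/p}$ (Lemma~\ref{lem:HH}). In the example above this gives $\ell^{(n-1)(1-s)/n}$ per scale, which is summable.
\item For general $\mu$, Lemma~\ref{lem:supercritical-counting} writes $w_Q=\int_0^\infty{\mathbf 1}_{\{\tau<w_Q\}}\,d\tau$. Minkowski then gives $(\sum_{Q\in\mathcal F_k}w_Q^p)^{1/p}\le\int_0^\infty N_k(\tau)^{1/p}\,d\tau$, where $N_k(\tau):=\#\{Q\in\mathcal F_k:w_Q>\tau\}$.
\item Relative isoperimetry and disjointness at scale $2^{-k}$ give $N_k(\tau)h_k\lesssim H(\tau):=\mathcal H^{n-1}(\{z\in\partial^*E:w(z)>\tau\})$. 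Hence $h_kN_k(\tau)^{1/p}\lesssim h_k^{(p-1)/p}H(\tau)^{1/p}{\mathbf 1}_{\{h_k\lesssim H(\tau)\}}$.
\item Since $p>1$, the geometric sum over $k$ is $\lesssim H(\tau)^{(p-1)/p}$. Integrating in $\tau$ yields $\int_{\partial^*E}w\,d\mathcal H^{n-1}$.
\end{itemize}

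Your density-gain factor from Step 3 then handles the sum over layers. What you call the ``nonlinear counting estimate'' (domination by the smallest cube of a layer) is only the pointwise step. Without the $\ell^p$ aggregation and this counting lemma, Step 3 does not close.

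Three smaller points:
\begin{itemize}
\item For $s=0$ one has $p=\frac{n}{n-1}$, not $p=1$.
\item For $s=0$ you should charge to the parent of your stopping cube, which satisfies $2^{-n-1}\le e_P<\frac12$. The maximal cube with $e_Q\ge\frac12$ may have $m_Q=0$.
\item Your argument for $\mu(\partial^eE)=0$ is fine once the main estimate is available. No crossing argument is needed, since $\mathcal D^s{\mathbf 1}_{E^1}(x)\gtrsim\delta r_m^{-s}$ directly along the good radii $r_m$.
\end{itemize}
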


The proof of Proposition~\ref{pro:11} is divided into three steps.
In Subsection~\ref{sec3.1}, we derive pointwise estimates
using shifted dyadic grids.
In Subsection~\ref{sec:boxing}, we establish a nonlinear
counting estimate that controls contributions across scales
by a weighted boundary integral.
Finally, in Subsection~\ref{sec:main-proof}, we combine these
estimates with the relative isoperimetric inequality
to prove Proposition~\ref{pro:11}.

\subsection{Pointwise Dyadic Estimates}
\label{sec3.1}

\begin{lemma}\label{lem:dyadic-reduction}
Let $s\in (0,1)$ and $E\subset{\mathbb R}^n$ be
Lebesgue measurable. Then
there exists a positive constant $C_{n,s}$,
depending only on $n$ and $s$, such that, for any $x\in{\mathbb R}^n$,
\begin{align}\label{eq:dyadic-reduction}
\mathcal D^s{\mathbf 1}_E(x)
\le C_{n,s}
\sum_{\nu\in\{0,\frac{1}{3},\frac{2}{3}\}^n}
\sum_{Q\in{\mathscr D}^\nu}
\ell(Q)^{-s}
\left|{\mathbf 1}_E(x)-e_Q \right|
{\mathbf 1}_Q(x),
\end{align}
where
\begin{align}\label{eq:density}
e_Q:=\frac{|E\cap Q|}{|Q|}.
\end{align}
\end{lemma}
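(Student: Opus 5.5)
We decompose the integral defining $\mathcal D^s{\mathbf 1}_E(x)$ into dyadic annuli and compare each annulus with a dyadic cube containing it. Since ${\mathbf 1}_E$ takes only the values $0$ and $1$, we have $|{\mathbf 1}_E(x)-{\mathbf 1}_E(y)|$ equal to $1-{\mathbf 1}_E(y)$ if $x\in E$ and to ${\mathbf 1}_E(y)$ if $x\notin E$. Hence, for any measurable set $A$,
\begin{align*}
\int_A|{\mathbf 1}_E(x)-{\mathbf 1}_E(y)|\,dy
=|A|\,\Big|{\mathbf 1}_E(x)-\frac{|E\cap A|}{|A|}\Big|.
\end{align*}
This identity is the key mechanism: averaged differences over a set become the distance from ${\mathbf 1}_E(x)$ to the density of $E$ in that set. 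The identity is linear in the measure, so we can enlarge $A$ to a cube $Q\supset A$ and use monotonicity in the form
\begin{align*}
\int_A|{\mathbf 1}_E(x)-{\mathbf 1}_E(y)|\,dy\le\int_Q|{\mathbf 1}_E(x)-{\mathbf 1}_E(y)|\,dy=|Q|\,|{\mathbf 1}_E(x)-e_Q|.
\end{align*}

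First, fix $x$ and write ${\mathbb R}^n\setminus\{x\}=\bigcup_{k\in\mathbb Z}A_k$ with $A_k:=B(x,2^{k+1})\setminus B(x,2^k)$. On $A_k$ we have $|x-y|^{-n-s}\le 2^{-k(n+s)}$, so
\begin{align*}
\mathcal D^s{\mathbf 1}_E(x)\le\sum_{k\in\mathbb Z}2^{-k(n+s)}\int_{B(x,2^{k+1})}|{\mathbf 1}_E(x)-{\mathbf 1}_E(y)|\,dy.
\end{align*}

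Second, for each $k$ we apply Lemma~\ref{lemCub}(ii) to the ball $B(x,2^{k+1})$. This gives an index $\nu_k$ and a cube $Q_k\in\mathscr D^{\nu_k}$ with $B(x,2^{k+1})\subset Q_k$, so that $x\in Q_k$, and $\ell(Q_k)\le C_n2^{k+1}$. Since $Q_k$ contains a ball of radius $2^{k+1}$, we also have $\ell(Q_k)\ge 2^{k+2}$, so $\ell(Q_k)\sim 2^k$ and $|Q_k|\sim 2^{kn}$. By the identity above, the $k$-th term is then at most
\begin{align*}
C_n\,2^{-ks}\,|{\mathbf 1}_E(x)-e_{Q_k}|\,{\mathbf 1}_{Q_k}(x)\le C_n'\,\ell(Q_k)^{-s}\,|{\mathbf 1}_E(x)-e_{Q_k}|\,{\mathbf 1}_{Q_k}(x).
\end{align*}

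Third, we check for overcounting, which is the only delicate point. A given cube $Q$ in a fixed grid $\mathscr D^\nu$ can equal $Q_k$ only for those $k$ with $2^k\sim\ell(Q)$. Because $2^{k+2}\le\ell(Q)\le C_n2^{k+1}$, this allows at most $O(\log_2 C_n)$ values of $k$. Summing over $k$ therefore bounds the right-hand side by a multiple, depending only on $n$, of the full sum over all $\nu\in\{0,\frac13,\frac23\}^n$ and all $Q\in\mathscr D^\nu$, since every term of that sum is nonnegative. The resulting constant depends only on $n$ and $s$ (through $C_n$ and the factor $2^{k s}$ comparisons), which yields \eqref{eq:dyadic-reduction}. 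I expect no real obstacle. The only care needed is in the bounded-multiplicity count and in noting that the dropped cubes contribute nonnegatively.
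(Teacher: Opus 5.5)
Your proposal is correct and follows essentially the same route as the paper: split into the annuli $A_k(x)$, enclose $B(x,2^{k+1})$ in an adjacent dyadic cube $Q_k$ with $\ell(Q_k)\sim 2^k$, use the identity $\int_{Q_k}|{\mathbf 1}_E(x)-{\mathbf 1}_E(y)|\,dy=|Q_k|\,|{\mathbf 1}_E(x)-e_{Q_k}|$, and finish with a bounded-multiplicity count. Your remark that $\ell(Q_k)\ge 2^{k+2}$ because $Q_k$ contains the ball is the lower bound the paper implicitly relies on, since Lemma~\ref{lemCub}(ii) only gives $\ell(Q_k)\lesssim 2^k$, when it asserts $\ell(Q_k)\sim 2^k$ for the multiplicity argument.
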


\begin{proof}
Fix $x\in{\mathbb R}^n$. For any $k\in\mathbb Z$, let
\begin{align*}
A_k(x):=\{y\in \mathbb{R}^n:2^k\le|x-y|<2^{k+1}\}.
\end{align*}
By Lemma \ref{lemCub}(ii), we
find that there exists a cube
$Q_k\in \bigcup_{\nu\in\{0,\frac{1}{3},\frac{2}{3}\}^n}{\mathscr D}^{\nu}$ such that
$B(x,2^{k+1})\subset Q_k$ and
$\ell(Q_k)\sim 2^k$.
From this, it follows that
\begin{align}\label{eq:ak}
\int_{A_k(x)}
\frac{|{\mathbf 1}_E(x)-{\mathbf 1}_E(y)|}{|x-y|^{n+s}}\,dy
\le 2^{-k(n+s)}\int_{Q_k}|{\mathbf 1}_E(x)-{\mathbf 1}_E(y)|\,dy.
\end{align}
Note that ${\mathbf 1}_E(x)\in\{0,1\}$.
Thus,
if ${\mathbf 1}_E(x)=1$, then
\begin{align*}
\int_{Q_k}
|{\mathbf 1}_E(x)-{\mathbf 1}_E(y)|\,dy
=
\int_{Q_k}[1-{\mathbf 1}_E(y)]\,dy
=
|Q_k|
\left(
1-e_{Q_k}
\right);
\end{align*}
if ${\mathbf 1}_E(x)=0$, then
\begin{align*}
\int_{Q_k}
|{\mathbf 1}_E(x)-{\mathbf 1}_E(y)|\,dy
=
\int_{Q_k}{\mathbf 1}_E(y)\,dy
=
|Q_k|e_{Q_k}.
\end{align*}
Using these, we obtain
\begin{align*}
\int_{Q_k}
|{\mathbf 1}_E(x)-{\mathbf 1}_E(y)|\,dy
=
|Q_k|
\left|
{\mathbf 1}_E(x)-e_{Q_k}
\right|,
\end{align*}
which, together with \eqref{eq:ak} and $\ell(Q_k)\sim 2^k$, implies that
\begin{align*}
\int_{A_k(x)}
\frac{|{\mathbf 1}_E(x)-{\mathbf 1}_E(y)|}{|x-y|^{n+s}}\,dy
\lesssim \ell(Q_k)^{-s}
\left|{\mathbf 1}_E(x)-e_{Q_k}\right|.
\end{align*}
Since $\ell(Q_k)\sim2^k$, each selected cube occurs only a bounded
number of times. Summing in $k$, we obtain \eqref{eq:dyadic-reduction},
which completes the proof of Lemma \ref{lem:dyadic-reduction}.
\end{proof}

For any Lebesgue measurable set $E\subset{\mathbb R}^n$,
$\nu\in\{0,\frac{1}{3},\frac{2}{3}\}^n$, and $s\in[0,1)$, let $e_Q$ be as in
\eqref{eq:density} and define, for any $j\in{\mathbb N}$ and
$Q\in\mathcal {\mathscr D}^{\nu}$,
\begin{align}\label{eq:mQ}
m_Q:=\min\{e_Q,1-e_Q\},
\end{align}
\begin{align}\label{eq:Fj}
\mathcal G^{\nu}_j:=
\left\{Q\in{\mathscr D}^{\nu}:2^{-j-1}<m_Q\le2^{-j}\right\},
\end{align}
and
\begin{align}\label{eq:Gj-definition}
G^{\nu}_j(x):=\sup_{Q\in\mathcal G^{\nu}_j}
2^{-j}\ell(Q)^{-s}{\mathbf 1}_Q(x),\qquad \forall\,x\in{\mathbb R}^n,
\end{align}
where $e_Q$ is as in \eqref{eq:density}.
The supremum over an empty family is understood to be zero.

\begin{lemma}\label{lem:last-crossing}
Let $s\in (0,1)$,
$E\subset{\mathbb R}^n$ be
Lebesgue measurable, and $\nu\in\{0,\frac{1}{3},\frac{2}{3}\}^n$.
Then there exists a positive constant $C_{n,s}$,
depending only on $n$ and $s$, such that, for any
$x\in E^0\cup E^1$,
\begin{align}\label{eq:last-crossing}
\sum_{Q\in{\mathscr D}^\nu}
\ell(Q)^{-s}
|{\mathbf 1}_{E^1}(x)-e_Q|{\mathbf 1}_{Q}(x)
\le C_{n,s}\sum_{j\in {\mathbb N}} G^{\nu}_j(x),
\end{align}
where $\mathcal G^{\nu}_j$ and $G^{\nu}_j$ are as, respectively,
 in \eqref{eq:Fj} and
\eqref{eq:Gj-definition},
and where $E^0$ and $E^1$ are as, respectively,
 in \eqref{eq:E000} and \eqref{eq:E111}.
\end{lemma}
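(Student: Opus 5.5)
Fix $\nu$ and $x\in E^0\cup E^1$. The cubes of $\mathscr D^\nu$ containing $x$ form a single nested chain $\{Q_k\}_{k\in\mathbb Z}$ with $\ell(Q_k)=2^k$, by Lemma~\ref{lemCub}(i). So the left-hand side of \eqref{eq:last-crossing} equals $\sum_{k\in\mathbb Z}2^{-ks}|{\mathbf 1}_{E^1}(x)-e_{Q_k}|$. The plan is to handle $x\in E^1$ and then obtain $x\in E^0$ by symmetry, replacing $E$ by ${\mathbb R}^n\setminus E$. This swaps $e_Q$ and $1-e_Q$, leaves $m_Q$ and hence $\mathcal G^\nu_j$ and $G^\nu_j$ unchanged, and swaps $E^0$ and $E^1$.

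\textbf{Preliminary density fact.} Let $x\in E^1$. Since $Q_k\subset B(x,\sqrt n\,2^k)$ and $|Q_k|\sim_n|B(x,\sqrt n\,2^k)|$, we get
\begin{align*}
1-e_{Q_k}\le \frac{|B(x,\sqrt n\,2^k)\setminus E|}{|Q_k|}\lesssim_n \frac{|B(x,\sqrt n\,2^k)\setminus E|}{|B(x,\sqrt n\,2^k)|}\to0
\quad\text{as } k\to-\infty.
\end{align*}
Hence $m_{Q_k}\to0$ as $k\to-\infty$. Also $e_{Q_k}>0$ for every $k$: if $e_{Q_k}=0$, then every smaller cube of the chain would have density $0$, which contradicts the limit above.

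\textbf{Split of the chain.} Divide the chain into the cubes with $e_{Q_k}\ge\frac12$ and those with $e_{Q_k}<\frac12$.

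For the cubes with $e_{Q_k}\ge\frac12$, the summand is $2^{-ks}(1-e_{Q_k})=2^{-ks}m_{Q_k}$. If $m_{Q_k}=0$ it vanishes. Otherwise $Q_k\in\mathcal G^\nu_j$ for exactly one $j\in\mathbb N$, because $0<m_Q\le\frac12$, and then $2^{-ks}m_{Q_k}\le 2^{-j}\ell(Q_k)^{-s}$. Fix $j$. The cubes of $\mathcal G^\nu_j$ containing $x$ have sizes bounded below, since $m_{Q_k}\to0$ as $k\to-\infty$. So there is a smallest one, say of size $2^{k_j}$, and
\begin{align*}
\sum_{k\ge k_j}2^{-j}2^{-ks}\le (1-2^{-s})^{-1}2^{-j}2^{-k_js}=(1-2^{-s})^{-1}G^\nu_j(x).
\end{align*}
Summing over $j$ bounds this part by $C_s\sum_{j}G^\nu_j(x)$.

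For the cubes with $e_{Q_k}<\frac12$, I bound each summand crudely by $2^{-ks}$. By the density fact, if such cubes exist there is a smallest one, $Q_{k_*}$, and their total contribution is at most $\sum_{k\ge k_*}2^{-ks}=(1-2^{-s})^{-1}\ell(Q_{k_*})^{-s}$.

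\textbf{The last crossing (main step).} The key step is to show that $Q_{k_*}$ is itself charged by some $G^\nu_j$ with $j$ bounded in terms of $n$. Its child $Q_{k_*-1}$ in the chain has $e\ge\frac12$, so
\begin{align*}
e_{Q_{k_*}}\ge 2^{-n}e_{Q_{k_*-1}}\ge2^{-n-1}.
\end{align*}
Thus $m_{Q_{k_*}}=e_{Q_{k_*}}\in[2^{-n-1},\frac12)$, so $Q_{k_*}\in\mathcal G^\nu_j$ for some $j\le n+1$. Consequently $G^\nu_j(x)\ge2^{-n-1}\ell(Q_{k_*})^{-s}$, and the second part is bounded by $2^{n+1}(1-2^{-s})^{-1}\sum_jG^\nu_j(x)$.

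Combining the two parts gives \eqref{eq:last-crossing} with $C_{n,s}\sim 2^{n}(1-2^{-s})^{-1}$. The only delicate points are the existence of the smallest cubes, which is exactly where $x\in E^0\cup E^1$ is used, and the lower bound on $e_{Q_{k_*}}$ at the crossing scale.
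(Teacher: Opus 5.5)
Your proof is correct and follows essentially the same route as the paper: a majority/minority split at density $\frac12$, grouping the majority cubes by the layer $\mathcal G^\nu_j$ and summing a geometric series from the smallest cube in each layer, and bounding the minority cubes by the last-crossing cube, whose density is at least $2^{-n-1}$, so that it lies in some $\mathcal G^\nu_j$ with $j\le n+1$. The differences are cosmetic. You index the chain by $\ell(Q_k)=2^k$ rather than $2^{-k}$, and you reduce the case $x\in E^0$ to $x\in E^1$ by passing to $\mathbb R^n\setminus E$. The paper instead treats both cases in parallel through its families $\mathcal B(x)$ and $\mathcal C(x)$.
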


\begin{proof}
Fix $x\in E^0\cup E^1$.
Then
${\mathbf 1}_{E^1}(x)\in\{0,1\}$.
Define
\begin{align*}
\mathcal B(x):=
\left\{Q\in {\mathscr D}^\nu : {\mathbf 1}_{E^1}(x)=1\text{ and }e_Q\ge\frac{1}{2},
\text{ or }{\mathbf 1}_{E^1}(x)=0\text{ and }e_Q\le\frac{1}{2}
\right\}
\end{align*}
and
\begin{align*}
\mathcal C(x):=
\left\{Q\in {\mathscr D}^\nu : {\mathbf 1}_{E^1}(x)=1\text{ and }e_Q<\frac{1}{2},
\text{ or }{\mathbf 1}_{E^1}(x)=0\text{ and }e_Q>\frac{1}{2}
\right\}.
\end{align*}
From these definitions, we deduce that,
if $Q\in \mathcal B(x)$, then
\begin{align}\label{eq:majority-oscillation}
|{\mathbf 1}_{E^1}(x)-e_Q|=m_Q\le \frac{1}{2};
\end{align}
if $Q\in \mathcal C(x)$, then
\begin{align}\label{eq:minority-oscillation}
|{\mathbf 1}_{E^1}(x)-e_Q|=1-m_Q\le1.
\end{align}
For any $k\in\mathbb Z$, let $Q_k(x)$ denote the unique cube
in ${\mathscr D}^\nu$ of edge length $\ell(Q_k(x))=2^{-k}$ that contains $x$.
For any fixed $j\in {\mathbb N}$, using $\mathcal G^{\nu}_j\subset{\mathscr D}^\nu$
and Lemma \ref{lemCub}(i), we find that
the collection
$\{Q\in\mathcal G^{\nu}_j:x\in Q\}$
is totally ordered by inclusion and hence is a subcollection
of the dyadic chain $\{Q_k(x)\}_{k\in\mathbb Z}$.
By the definitions of $E^0$ and $E^1$ and by the inclusion
$Q_k(x)\subset B(x,\sqrt n\,2^{-k})$, we find that
\begin{align}\label{eq:dyadic-density-convergence}
e_{Q_k(x)}\to {\mathbf 1}_{E^1}(x)
\end{align}
as $k\to\infty$,
which implies that
\begin{align}\label{eq:min-density-convergence}
m_{Q_k(x)}
=
\min \left\{e_{Q_k(x)},1-e_{Q_k(x)}\right\}
\to
\min \left\{{\mathbf 1}_{E^1}(x),1-{\mathbf 1}_{E^1}(x)\right\}
=0
\end{align}
as $k\to\infty$.

We first estimate the contribution of $\mathcal B(x)$.
For any $j\in {\mathbb N}$, define
\begin{align*}
\mathcal A_j(x):
=
\mathcal B(x) \cap \{Q\in\mathcal G^{\nu}_j:x\in Q\}.
\end{align*}
If $\mathcal A_j(x)=\emptyset$, then its contribution is zero.
Therefore, suppose that $\mathcal A_j(x)\neq\emptyset$, and let
\begin{align*}
I_j(x)
:=
\left\{
k\in\mathbb Z:
Q_k(x)\in\mathcal A_j(x)
\right\}.
\end{align*}
Then $I_j(x)$ is nonempty.

We claim that $I_j(x)$ is bounded above. Indeed, by
\eqref{eq:min-density-convergence}, there exists
$K_j\in {\mathbb Z}$ such that, for any $k\ge K_j$,
$m_{Q_k(x)}\le2^{-j-1}$, which, together with the definition
of $\mathcal G^\nu_j$, implies $Q_k(x)\notin \mathcal G^\nu_j$.
Consequently,
$I_j(x)\subset(-\infty,K_j-1]\cap\mathbb Z.$
Thus, we can define
$k_j:=\max I_j(x)$ and $P_j:=Q_{k_j}(x).$
Then $P_j$ is the cube in $\mathcal A_j(x)$ having the smallest
edge length.
Using this, \eqref{eq:majority-oscillation},
and the definitions of $\mathcal A_j(x)$ and $G^{\nu}_j$, we obtain
\begin{align*}
\sum_{Q\in\mathcal A_j(x)}
\ell(Q)^{-s}
|{\mathbf 1}_{E^1}(x)-e_Q|
&=
\sum_{Q\in\mathcal A_j(x)}
\ell(Q)^{-s}m_Q
=
\sum_{k\in I_j(x)}
2^{ks}m_{Q_k(x)}
\\
&\le
2^{-j}\sum_{k\in I_j(x)}2^{ks}
\le
2^{-j}\sum_{k=-\infty}^{k_j}2^{ks}
\\
&=
2^{-j}2^{k_js}
\sum_{r=0}^{\infty}2^{-rs}
=
\frac{2^{-j}\ell(P_j)^{-s}}{1-2^{-s}}
\le
\frac{1}{1-2^{-s}}G^{\nu}_j(x).
\end{align*}
This implies
\begin{align}
\sum_{Q\in \mathcal B(x)}
\ell(Q)^{-s}|{\mathbf 1}_{E^1}(x)-e_Q|{\mathbf 1}_Q (x)
\le
\frac{1}{1-2^{-s}}
\sum_{j\in {\mathbb N}} G^{\nu}_j(x).
\label{eq:total-majority}
\end{align}

It remains to estimate the contribution of $\mathcal C(x)$. Define
\begin{align*}
I_{\rm min}(x)
:=
\left\{
k\in\mathbb Z:
Q_k(x) \in \mathcal C(x)
\right\}.
\end{align*}
If $I_{\rm min}(x)=\emptyset$, then this contribution is zero
and \eqref{eq:last-crossing} follows from \eqref{eq:total-majority}.
Suppose therefore that $I_{\rm min}(x)\ne\emptyset$.
We claim that $I_{\rm min}(x)$ is bounded above. To see this,
suppose first that ${\mathbf 1}_{E^1}(x)=1$. From this and
\eqref{eq:dyadic-density-convergence}, we infer that
$e_{Q_k(x)}\to1$ as $k\to \infty$.
Thus, there exists $K\in\mathbb Z$ such that,
for any $k\ge K$,
$e_{Q_k(x)}>\frac{1}{2}$ and hence $Q_k(x)\in \mathcal{B}(x)$.
If ${\mathbf 1}_{E^1}(x)=0$, then
$e_{Q_k(x)}\to0$
as $k\to \infty$.
Thus, there exists $K\in\mathbb Z$ such that,
for any $k\ge K$,
$e_{Q_k(x)}<\frac{1}{2}$,
and hence $Q_k(x)\in \mathcal{B}(x)$.
By these, we conclude
that $I_{\rm min}(x)\subset(-\infty,K-1]\cap\mathbb Z.$
We then define $k_*:=\max I_{\rm min}(x)$ and
$R_*:=Q_{k_*}(x).$
Then $R_*$ is the smallest cube in $\mathcal C(x)$ containing $x$.
From this and \eqref{eq:minority-oscillation}, we
deduce that
\begin{align}
\sum_{Q\in \mathcal C(x)}
\ell(Q)^{-s}|{\mathbf 1}_{E^1}(x)-e_Q|{\mathbf 1}_{Q}(x)
&\le
\sum_{Q\in \mathcal C(x)}
\ell(Q)^{-s}{\mathbf 1}_{Q}(x)
=
\sum_{k\in I_{\rm min}(x)}2^{ks}\notag\\
&\le
\sum_{k=-\infty}^{k_*}2^{ks}
=
2^{k_*s}\sum_{r=0}^{\infty}2^{-rs}
=
\frac{\ell(R_*)^{-s}}{1-2^{-s}}.
\label{eq:total-minority-first}
\end{align}

Let
$ R_*':=Q_{k_*+1}(x)$
be the dyadic child of $R_*$ containing $x$. Then $R_*'\in \mathcal B (x)$.
We next show that
\begin{align}\label{eq:last-minority-density}
m_{R_*}\ge2^{-n-1}.
\end{align}
Suppose first that ${\mathbf 1}_{E^1}(x)=1$. Using
$R_*\in \mathcal C (x)$ and $R_*'\in \mathcal B (x)$, we obtain
$e_{R_*}<\frac{1}{2}$ and $e_{R_*'}\ge\frac{1}{2}.$
From this and $|R_*'|=2^{-n}|R_*|$,
it follows that
\begin{align*}
m_{R_*}
=e_{R_*}
=
\frac{|E\cap R_*|}{|R_*|}
\ge
\frac{|E\cap R_*'|}{|R_*|}
=
\frac{|R_*'|}{|R_*|}
\frac{|E\cap R_*'|}{|R_*'|}=
2^{-n}e_{R_*'}
\ge2^{-n-1}.
\end{align*}
Suppose now that ${\mathbf 1}_{E^1}(x)=0$. By
$R_*\in \mathcal C (x)$ and $R_*'\in \mathcal B (x)$, we have
$e_{R_*}>\frac{1}{2}$ and $e_{R_*'}\le\frac{1}{2}.$
From this, we infer that
\begin{align*}
m_{R_*}
=1-e_{R_*}
=
\frac{|E^c\cap R_*|}{|R_*|}
\ge
\frac{|E^c\cap R_*'|}{|R_*|}
=
\frac{|R_*'|}{|R_*|}
\frac{|E^c\cap R_*'|}{|R_*'|}
=
2^{-n}(1-e_{R_*'})
\ge2^{-n-1}.
\end{align*}
This proves \eqref{eq:last-minority-density} in both cases.

By \eqref{eq:last-minority-density} and $m_{R_*}\le \frac{1}{2}$, we find that
$2^{-n-1}\le m_{R_*}\le\frac{1}{2}.$
Observe that
\begin{align*}
\bigcup_{j=1}^{n+1}
\left(2^{-j-1},2^{-j}\right]
=
\left(2^{-n-2},\frac{1}{2}\right].
\end{align*}
Thus, there exists a unique
$j_*\in\{1,\ldots,n+1\}$
such that $2^{-j_*-1}<m_{R_*}\le2^{-j_*},$
which implies that
$R_*\in\mathcal G^{\nu}_{j_*}.$
From this
and the
definition of $G^{\nu}_{j_*}$, we deduce that
\begin{align*}
\ell(R_*)^{-s}
\le
2^{j_*}G^{\nu}_{j_*}(x)
\le
2^{n+1}G^{\nu}_{j_*}(x)
\le
2^{n+1}\sum_{j=1}^{n+1}G^{\nu}_j(x).
\end{align*}
Combining this with \eqref{eq:total-minority-first}, we obtain
\begin{align*}
\sum_{Q\in \mathcal C(x)}
\ell(Q)^{-s}|{\mathbf 1}_{E^1}(x)-e_Q|{\mathbf 1}_{Q}(x)
\le
\frac{2^{n+1}}{1-2^{-s}}
\sum_{j=1}^{n+1}G^{\nu}_j(x),
\end{align*}
which, together with \eqref{eq:total-majority} and
${\mathscr D}^\nu= \mathcal B(x)\cup\mathcal C(x)$,
further implies \eqref{eq:last-crossing}.
This completes the proof of Lemma~\ref{lem:last-crossing}.
\end{proof}

\begin{lemma}\label{lem:zero-order-crossing}
Let $E\subset{\mathbb R}^n$ be a Lebesgue measurable set with $|E|<\infty$,
$\nu\in\{0,\frac{1}{3},\frac{2}{3}\}^n$,
and $\mathcal G^{\nu}_j$ and $G^{\nu}_j$ be as, respectively, in \eqref{eq:Fj} and
\eqref{eq:Gj-definition} with $s=0$.
Then, for any $x\in{\mathbb R}^n$,
\begin{align}\label{eq:zero-order-crossing}
{\mathbf 1}_{E^1}(x)\le 2^{n+1}\sum_{j=1}^{n+1}G^{\nu}_j(x).
\end{align}
\end{lemma}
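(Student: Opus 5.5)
The case ${\mathbf 1}_{E^1}(x)=0$ is trivial, since the right-hand side of \eqref{eq:zero-order-crossing} is nonnegative. So fix $x\in E^1$. I will find a single cube $R_*\in{\mathscr D}^\nu$ containing $x$ with $2^{-n-1}\le m_{R_*}\le\frac12$. With $s=0$, such a cube satisfies $G^\nu_{j_*}(x)\ge 2^{-j_*}\ge 2^{-n-1}$, which is exactly \eqref{eq:zero-order-crossing}. This is the same last-crossing mechanism as in the $\mathcal C(x)$ part of the proof of Lemma~\ref{lem:last-crossing}, with the role of the small-scale limit now played by the large-scale behaviour forced by $|E|<\infty$.

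As before, let $Q_k(x)$ be the cube of ${\mathscr D}^\nu$ of edge length $2^{-k}$ containing $x$. The key steps are these.
\begin{enumerate}[{\rm (i)}]
\item \emph{Small scales.} Since $x\in E^1$ and $Q_k(x)\subset B(x,\sqrt n\,2^{-k})$ with comparable measure, we have $e_{Q_k(x)}\to1$ as $k\to\infty$, exactly as in \eqref{eq:dyadic-density-convergence}. Note that $x\in E^1$ does not force $x\in E$; only densities enter, so this is harmless.
\item \emph{Large scales.} Since $|E|<\infty$, we have $e_{Q_k(x)}\le |E|\,2^{kn}\to0$ as $k\to-\infty$.
\item \emph{Last crossing.} The set $I:=\{k\in\mathbb Z: e_{Q_k(x)}<\frac12\}$ is nonempty by (ii) and bounded above by (i). Set $k_*:=\max I$, $R_*:=Q_{k_*}(x)$ and $R_*':=Q_{k_*+1}(x)$. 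Then $e_{R_*}<\frac12\le e_{R_*'}$.
\end{enumerate}

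For step (iii), the computation already done for \eqref{eq:last-minority-density} gives
\[
m_{R_*}=e_{R_*}\ge 2^{-n}e_{R_*'}\ge 2^{-n-1},
\]
and clearly $m_{R_*}\le\frac12$. Hence there is a unique $j_*\in\{1,\ldots,n+1\}$ with $R_*\in\mathcal G^\nu_{j_*}$. Since $x\in R_*$, the definition \eqref{eq:Gj-definition} with $s=0$ gives $G^\nu_{j_*}(x)\ge 2^{-j_*}\ge 2^{-n-1}$. Therefore
\[
1\le 2^{n+1}G^\nu_{j_*}(x)\le 2^{n+1}\sum_{j=1}^{n+1}G^\nu_j(x).
\]

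The only genuinely new point compared with Lemma~\ref{lem:last-crossing} is step (ii): in the earlier lemma a crossing cube was given by hypothesis, whereas here its existence comes from the finiteness of $|E|$. I expect no real obstacle beyond making sure that the maximum in step (iii) exists, which is exactly what (i) and (ii) supply.
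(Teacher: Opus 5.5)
Your proposal is correct and follows essentially the same argument as the paper. Both take the last index $k_*$ with $e_{Q_k(x)}<\frac12$, whose existence comes from $e_{Q_k(x)}\to1$ as $k\to\infty$ and from $e_{Q_k(x)}\le|E|2^{kn}\to0$ as $k\to-\infty$; this gives a cube $R\ni x$ with $2^{-n-1}\le m_R<\frac12$, so that $R\in\mathcal G^\nu_{j_*}$ for some $j_*\le n+1$ and hence $1\le 2^{n+1}G^\nu_{j_*}(x)$.
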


\begin{proof}
If $x\notin E^1$, then \eqref{eq:zero-order-crossing} is immediate.
Fix $x\in E^1$. For any $k\in{\mathbb Z}$, let $Q_k(x)$ be the unique
cube in ${\mathscr D}^\nu$ of edge length $2^{-k}$ containing $x$.
By the definition of $E^1$ and
$Q_k(x)\subset B(x,\sqrt n\,2^{-k})$, we have
$e_{Q_k(x)}\to 1$ as $k\to\infty.$
On the other hand, $|E|<\infty$ implies that
\begin{align*}
0\le e_{Q_k(x)}\le |E|2^{kn}\to 0
\qquad\text{as }k\to-\infty.
\end{align*}
Thus, the set $\{k\in{\mathbb Z}:e_{Q_k(x)}<\frac{1}{2}\}$ is nonempty
and bounded above. Define
\begin{align*}
k_*:=\max\left\{k\in{\mathbb Z}:e_{Q_k(x)}<\frac{1}{2}\right\},
\quad R:=Q_{k_*}(x),\quad\text{and}\quad R':=Q_{k_*+1}(x).
\end{align*}
Then $e_R<\frac{1}{2}$ and $e_{R'}\ge\frac{1}{2}$.
Using $R'\subset R$ and $|R'|=2^{-n}|R|$, we obtain
\begin{align*}
2^{-n-1}\le 2^{-n}e_{R'}
=\frac{|E\cap R'|}{|R|}
\le\frac{|E\cap R|}{|R|}=m_R<\frac{1}{2}.
\end{align*}
Consequently, there exists $j_*\in\{1,\ldots,n+1\}$ such that
$R\in\mathcal G^\nu_{j_*}$. From this, $x\in R$, and
\eqref{eq:Gj-definition} with $s=0$, it follows that
\begin{align*}
{\mathbf 1}_{E^1}(x)=1
\le 2^{n+1}2^{-j_*}
\le 2^{n+1}G^{\nu}_{j_*}(x)
\le 2^{n+1}\sum_{j=1}^{n+1}G^{\nu}_j(x).
\end{align*}
This proves \eqref{eq:zero-order-crossing} and completes the proof
of Lemma~\ref{lem:zero-order-crossing}.
\end{proof}

\subsection{The Nonlinear Boxing Estimate}\label{sec:boxing}

\begin{lemma}\label{lem:HH}
Let $p\in(1,\infty)$
and $\mu$ be a locally finite nonnegative Borel measure.
Let $\mathcal F$ be a subcollection of a dyadic system ${\mathscr D}$
such that the cubes in $\mathcal F$ have pairwise
disjoint interiors.
Assume that $\{h_Q\}_{Q\in \mathcal F}$
and $\{w_Q\}_{Q\in \mathcal F}$ are positive sequences
such that, for any $Q\in\mathcal F$,
\begin{align}\label{eq:MUQ}
\mu (Q)^{\frac{1}{p}} \le C_0 h_Q w_Q,
\end{align}
where $C_0$ is a positive constant independent of $Q$.
Then
\begin{align}\label{eq:LamQ}
\left\|\sum_{Q\in\mathcal F}{\mathbf 1}_Q\right\|_{L^{p,1}(\mu)}
\le p C_0
\left[\sum_{Q\in\mathcal F}(h_Q w_Q)^p\right]^{\frac{1}{p}}.
\end{align}
\end{lemma}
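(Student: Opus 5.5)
Since the cubes in $\mathcal F$ have pairwise disjoint interiors and belong to one dyadic system (half-open cubes, Lemma~\ref{lemCub}(i)), two distinct cubes are either disjoint or nested; disjoint interiors rule out strict nesting, so they are pairwise disjoint as sets. Hence $g:=\sum_{Q\in\mathcal F}{\mathbf 1}_Q={\mathbf 1}_U$ with $U:=\bigcup_{Q\in\mathcal F}Q$. First I will compute the $L^{p,1}(\mu)$ quasi-norm of an indicator directly from \eqref{eq:Lorentz-distribution}: the distribution function equals $\mu(U)$ for $t\in(0,1)$ and $0$ for $t\ge1$, so
\begin{align*}
\|{\mathbf 1}_U\|_{L^{p,1}(\mu)}=p\int_0^1\mu(U)^{\frac1p}\,dt=p\,\mu(U)^{\frac1p}.
\end{align*}

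Next, by countable additivity and disjointness, $\mu(U)=\sum_{Q\in\mathcal F}\mu(Q)$. Raising \eqref{eq:MUQ} to the power $p$ gives $\mu(Q)\le C_0^p(h_Qw_Q)^p$ for every $Q\in\mathcal F$, hence
\begin{align*}
\mu(U)^{\frac1p}\le C_0\left[\sum_{Q\in\mathcal F}(h_Qw_Q)^p\right]^{\frac1p},
\end{align*}
and multiplying by $p$ yields \eqref{eq:LamQ}.

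The only point requiring care, and the one I expect to be the (mild) main obstacle, is the first step: justifying that disjoint interiors imply genuine disjointness. This uses that all cubes come from a single grid ${\mathscr D}$ of half-open cubes, where any two cubes are either disjoint or one contains the other, and a containing cube of strictly larger size shares interior points with the smaller one. Everything else is a direct computation, and if $\sum(h_Qw_Q)^p=\infty$ there is nothing to prove.
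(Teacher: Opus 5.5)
Your proposal is correct and follows essentially the same route as the paper: both identify $\sum_{Q\in\mathcal F}{\mathbf 1}_Q$ with the indicator of $\bigcup_{Q\in\mathcal F}Q$, evaluate the $L^{p,1}(\mu)$ norm as $p\,\mu\bigl(\bigcup_{Q\in\mathcal F}Q\bigr)^{\frac1p}$, and bound this measure via \eqref{eq:MUQ}. The only difference is cosmetic: the paper uses countable subadditivity of $\mu$ where you use additivity, and your explicit argument that disjoint interiors within one half-open dyadic grid force genuine disjointness spells out a step the paper leaves implicit.
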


\begin{proof}
For any $t\in (0,\infty)$, let
\begin{align*}
E_t :=\left\{x\in{\mathbb R}^n:\sum_{Q\in\mathcal F} {\mathbf 1}_Q (x)>t\right\}.
\end{align*}
From this and the disjointness of $\mathcal F$, we infer that
\begin{align}\label{eq:Gj-level-set}
E_t
=
\begin{cases}
\displaystyle\bigcup_{Q\in\mathcal F}Q,\quad &t\in (0,1),\\
\emptyset,\quad &t\in [1,\infty).
\end{cases}
\end{align}
Applying this and
the fact that every dyadic cube is a Borel set and that $\mathcal F$ is
countable, we conclude that $\sum_{Q\in\mathcal F} {\mathbf 1}_Q$
is Borel measurable and,
in particular, $\mu$-measurable.
Let
\begin{align*}
H:= \left[\sum_{Q\in\mathcal F}(h_Q w_Q)^p\right]^{\frac{1}{p}}.
\end{align*}
From \eqref{eq:Gj-level-set}, the countable subadditivity of $\mu$, and
\eqref{eq:MUQ},
we deduce that, for any $t\in (0,1)$,
\begin{align*}
\mu(E_t)
\le
\sum_{Q\in\mathcal F}
\mu(Q)
\le C_0^p
\sum_{Q\in\mathcal F}
\left(h_Q w_Q\right)^p
=C_0^p
H^p.
\end{align*}
By this and \eqref{eq:Lorentz-distribution},
we obtain
\begin{align*}
\left\|\sum_{Q\in\mathcal F} {\mathbf 1}_Q\right\|_{L^{p,1}(\mu)}
&=p\int_0^\infty
\mu(E_t)^{\frac{1}{p}}\,dt
\le p C_0 H
\int_0^1
\,dt
=
p C_0  H,
\end{align*}
which implies \eqref{eq:LamQ}.
This completes the proof of
Lemma \ref{lem:HH}.
\end{proof}

\begin{lemma}
\label{lem:supercritical-counting}
Let $p\in (1,\infty)$, $E\subset{\mathbb R}^n$ be a set of finite perimeter, and
$w$ be a nonnegative Borel measurable function.
Let $\mathcal F$ be a subcollection of a dyadic system ${\mathscr D}$.
Suppose that,
for any $Q\in\mathcal F$,
\begin{align}\label{eq:HQQ}
h_Q:=a\ell(Q)^{n-1}
\le C_n{\mathcal H}^{n-1}(\partial^*E\cap Q^\circ),
\end{align}
where $a\in (0,\infty)$ is independent of $Q$ and $C_n$ is a positive constant
depending only on $n$.
Define
\begin{align*}
w_Q:=\mathop{\operatorname{ess\,inf}}_{\partial^*E\cap Q^\circ}w
\quad \text{and}\quad
\mathcal F_k:=\{Q\in\mathcal F:\ell(Q)=2^{-k}\},
\end{align*}
where the essential infimum is taken with respect to
${\mathcal H}^{n-1}$ on $\partial^*E\cap Q^\circ$.
Then there exists a positive constant $C_{n,p}$, depending only on $n$ and $p$
and independent of $a$, such that
\begin{align}\label{eq:supercritical-counting}
\sum_{k\in\mathbb Z}
\left[\sum_{Q\in\mathcal F_k}(h_Q w_Q)^p\right]^{\frac{1}{p}}
\le C_{n,p}
\int_{\partial^*E}w\,d{\mathcal H}^{n-1}.
\end{align}
\end{lemma}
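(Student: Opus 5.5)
The plan is to reduce, by a layer-cake argument in $w$, to the case where $w$ is an indicator function, and then to exploit that $p>1$ together with the rigid form $h_Q=a\ell(Q)^{n-1}$ to obtain a geometric series in the scale parameter $k$.

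\emph{Step 1: layer cake and Minkowski.} For $\lambda\in(0,\infty)$ let $A_\lambda:=\partial^*E\cap\{w>\lambda\}$. Writing $w_Q=\int_0^\infty {\mathbf 1}_{\{w_Q>\lambda\}}\,d\lambda$ and applying Minkowski's inequality in $\ell^p(\mathcal F_k)$ gives
\begin{align*}
\left[\sum_{Q\in\mathcal F_k}(h_Qw_Q)^p\right]^{\frac1p}
\le\int_0^\infty\left[\sum_{Q\in\mathcal F_k^\lambda}h_Q^p\right]^{\frac1p}d\lambda,
\qquad \mathcal F_k^\lambda:=\{Q\in\mathcal F_k:w_Q>\lambda\}.
\end{align*}
Sum over $k$ and use Tonelli. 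On the other side, the layer-cake formula gives
\begin{align*}
\int_{\partial^*E}w\,d{\mathcal H}^{n-1}=\int_0^\infty{\mathcal H}^{n-1}(A_\lambda)\,d\lambda.
\end{align*}
So it suffices to prove, for each fixed $\lambda$ with $T:={\mathcal H}^{n-1}(A_\lambda)$,
\begin{align*}
\sum_{k\in\mathbb Z}\left[\sum_{Q\in\mathcal F_k^\lambda}h_Q^p\right]^{\frac1p}\le C_{n,p}T.
\end{align*}
The key input here is the essential infimum. If $w_Q>\lambda$, then ${\mathcal H}^{n-1}$-a.e.\ point of $\partial^*E\cap Q^\circ$ lies in $A_\lambda$. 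Hence \eqref{eq:HQQ} upgrades to $h_Q\le C_n{\mathcal H}^{n-1}(A_\lambda\cap Q^\circ)$.

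\emph{Step 2: counting at a fixed scale.} Fix $k$ and let $N_k:=\#\mathcal F_k^\lambda$. Cubes of the same edge length in one dyadic system are disjoint, so summing the upgraded bound yields
\begin{align*}
N_k\,a2^{-k(n-1)}\le C_nT.
\end{align*}
This has two consequences. First, $\mathcal F_k^\lambda\neq\emptyset$ forces $a2^{-k(n-1)}\le C_nT$. Second, $N_k\le C_nT\,2^{k(n-1)}/a$. Therefore
\begin{align*}
\left[\sum_{Q\in\mathcal F_k^\lambda}h_Q^p\right]^{\frac1p}
=a2^{-k(n-1)}N_k^{\frac1p}
\le (C_nT)^{\frac1p}\bigl(a2^{-k(n-1)}\bigr)^{1-\frac1p}.
\end{align*}

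\emph{Step 3: summing over scales.} The quantities $a2^{-k(n-1)}$ form a geometric sequence with ratio $2^{-(n-1)}$. The contributing indices $k$ are those with $a2^{-k(n-1)}\le C_nT$. Since $1-\frac1p>0$ and $n\ge2$, the sum of $(a2^{-k(n-1)})^{1-\frac1p}$ over these $k$ is at most $(C_nT)^{1-\frac1p}/(1-2^{-(n-1)(1-\frac1p)})$. Multiplying by $(C_nT)^{1/p}$ gives $C_{n,p}T$, with $C_{n,p}$ independent of $a$. Integrating in $\lambda$ then yields \eqref{eq:supercritical-counting}.

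\emph{Main obstacle.} The step I expect to be delicate is Step 3: the interplay between the upper bound on $N_k$ and the threshold that forces $\mathcal F_k^\lambda$ to be empty at coarse scales. This is exactly where $p>1$ is needed, since for $p=1$ the series would not converge. The rest is bookkeeping: checking the measurability of $\lambda\mapsto\#\mathcal F_k^\lambda$ (it is monotone in $\lambda$), and justifying the essential-infimum upgrade in Step 1.
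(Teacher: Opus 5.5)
Your proposal is correct and follows the paper's proof essentially step by step. Both arguments use the layer-cake representation of $w_Q$ with Minkowski's inequality in $\ell^p(\mathcal F_k)$, and both obtain the same-scale counting bound $N_k(\tau)h_k\le C_n{\mathcal H}^{n-1}(\{z\in\partial^*E:w(z)>\tau\})$ from the essential-infimum upgrade and disjointness. Both then note that nonemptiness forces $h_k\le C_nH(\tau)$, sum $h_k^{(p-1)/p}$ geometrically over the admissible scales, and finish with Tonelli and the layer-cake formula on $\partial^*E$.

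The only cosmetic difference is that the paper first reduces to $\int_{\partial^*E}w\,d{\mathcal H}^{n-1}<\infty$ to ensure that $N_k(\tau)$ is finite. Your argument covers this implicitly, because the fixed-$\lambda$ bound is trivial when $T=\infty$.
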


\begin{proof}
We may
assume that
\begin{align}\label{eq:finite-boundary-charge}
\int_{\partial^*E}w\,d{\mathcal H}^{n-1}<\infty.
\end{align}
For any $k\in\mathbb Z$,
if $Q\in\mathcal F_{k}$, then $\ell(Q)=2^{-k}$, and hence
\begin{align}\label{eq:hQhjk}
h_Q
=
a\ell(Q)^{n-1}
=
a2^{-k(n-1)}
=:
h_{k}.
\end{align}
For any $\tau\in (0,\infty)$, define
\begin{align*}
H(\tau):
=
{\mathcal H}^{n-1}\left(
\{z\in\partial^*E:w(z)>\tau\}
\right)\quad\mathrm{and}\quad
N_{k}(\tau):
=
\# \left\{
Q\in\mathcal F_{k}:w_Q>\tau
\right\},
\end{align*}
where $\# E$ denotes the cardinality of a set $E$.

We first show that, for any $\tau\in (0,\infty)$,
\begin{align}\label{eq:counting}
N_{k}(\tau)h_{k}
\le C_n H(\tau),
\end{align}
where $C_n$ is a positive constant depending only on $n$.
To this end, for any $\tau\in (0,\infty)$, define
\begin{align*}
\mathcal Q_{k}(\tau):
=
\left\{
Q\in\mathcal F_{k}:w_Q>\tau
\right\}.
\end{align*}
By the definition of
$w_Q$, we find that, for any $\tau\in (0,\infty)$,
the inequality $w_Q>\tau$ implies that,
for ${\mathcal H}^{n-1}$-almost every $z\in \partial^*E\cap Q^\circ$,
$w(z)>\tau$.
Therefore, for any $Q\in\mathcal Q_{k}(\tau)$,
\begin{align}\label{eq:SQ-subset}
\partial^*E\cap Q^\circ
\subset
\{z\in\partial^*E:w(z)>\tau\}
\end{align}
up to a ${\mathcal H}^{n-1}$-null set.
From this, \eqref{eq:hQhjk}, the definitions of $\mathcal Q_{k}(\tau)$ and $H(\tau)$,
\eqref{eq:HQQ},
the disjointness of the sets $\{\partial^*E\cap Q^\circ:Q\in\mathcal F_{k}\}$, and
\eqref{eq:SQ-subset},
it follows that
\begin{align*}
N_{k}(\tau)h_{k}
&=
\sum_{Q\in\mathcal Q_{k}(\tau)}h_Q
\le C_n
\sum_{Q\in\mathcal Q_{k}(\tau)}
{\mathcal H}^{n-1}(\partial^*E\cap Q^\circ)\\
&=
C_n{\mathcal H}^{n-1}\left(
\bigcup_{Q\in\mathcal Q_{k}(\tau)}\left[\partial^*E\cap Q^\circ\right]
\right)\\
&\le C_n
{\mathcal H}^{n-1}\left(
\{z\in\partial^*E:w(z)>\tau\}
\right)=C_n
H(\tau).
\end{align*}
This proves \eqref{eq:counting}.

By Chebyshev's inequality and
\eqref{eq:finite-boundary-charge}, we find that,
for any $\tau\in (0,\infty)$,
\begin{align*}
H(\tau)
\le
\frac{1}{\tau}
\int_{\partial^*E}w\,d{\mathcal H}^{n-1}
<\infty.
\end{align*}
Using this and \eqref{eq:counting}, we conclude that
$N_{k}(\tau)$ is finite.
In particular, if $N_{k}(\tau)>0$, then
$N_{k}(\tau)\ge1$, and hence \eqref{eq:counting} yields
\begin{align*}
h_{k}
\le
N_{k}(\tau)h_{k}
\le C_n H(\tau);
\end{align*}
that is,
\begin{align}\label{eq:Njq0}
N_{k}(\tau)>0
\quad\Rightarrow \quad
h_{k}\le C_nH(\tau).
\end{align}

For any $Q\in\mathcal F$, the layer-cake identity gives
\begin{align*}
w_Q
=
\int_0^\infty
{\mathbf 1}_{\{\tau<w_Q\}}\,d\tau.
\end{align*}
Applying this, $p>1$, and Minkowski's integral inequality,
we obtain
\begin{align}\label{eq:Minkowski-count}
\left(
\sum_{Q\in\mathcal F_{k}}w_Q^p
\right)^{\frac{1}{p}}
&=
\left[
\sum_{Q\in\mathcal F_{k}}
\left(
\int_0^\infty
{\mathbf 1}_{\{\tau<w_Q\}}\,d\tau
\right)^p
\right]^{\frac{1}{p}}\notag \\
&\le
\int_0^\infty
\left(
\sum_{Q\in\mathcal F_{k}}
{\mathbf 1}_{\{\tau<w_Q\}}^p
\right)^{\frac{1}{p}}
d\tau
= \int_0^\infty
N_{k}(\tau)^{\frac{1}{p}}\,d\tau.
\end{align}
When $N_{k}(\tau)>0$, from \eqref{eq:counting}, we infer that
\begin{align*}
h_{k}N_{k}(\tau)^{\frac{1}{p}}
\lesssim
h_{k}
\left[
\frac{H(\tau)}{h_{k}}
\right]^{\frac{1}{p}}
=
h_{k}^{\frac{p-1}{p}}H(\tau)^{\frac{1}{p}}.
\end{align*}
When $N_{k}(\tau)=0$, the left-hand side is zero. Applying
these and \eqref{eq:Njq0}, we conclude that
\begin{align*}
h_{k}N_{k}(\tau)^{\frac{1}{p}}
\lesssim
h_{k}^{\frac{p-1}{p}}H(\tau)^{\frac{1}{p}}
{\mathbf 1}_{\{h_{k}\le C_nH(\tau)\}}.
\end{align*}
By this and \eqref{eq:Minkowski-count}, we conclude that
\begin{align}\label{eq:boxing-intermediate}
\left[\sum_{Q\in\mathcal F_k}(h_Q w_Q)^p\right]^{\frac{1}{p}}
\lesssim
\int_0^\infty
h_{k}^{\frac{p-1}{p}}H(\tau)^{\frac{1}{p}}
{\mathbf 1}_{\{h_{k}\le C_nH(\tau)\}}
\,d\tau.
\end{align}
It remains to sum this estimate over $k$. For this purpose, we
show that, for any fixed $\tau\in (0,\infty)$,
\begin{align}\label{eq:geometric-h}
\sum_{\genfrac{}{}{0pt}{}{k\in\mathbb Z}{h_{k}\le C_nH(\tau)}}
h_{k}^{\frac{p-1}{p}}
\lesssim H(\tau)^{\frac{p-1}{p}}
\end{align}
with the implicit constant depending only on $n$ and $p$.
Fix $\tau\in (0,\infty)$. If $H(\tau)=0$, then the set
\begin{align*}
\left\{k\in\mathbb Z:h_{k}\le C_nH(\tau)\right\}
\end{align*}
is empty because $h_{k}>0$ for any $k\in \mathbb{Z}$. In this case,
\eqref{eq:geometric-h} is immediate.
Suppose now that $H(\tau)>0$.
Using $h_{k}=a2^{-k(n-1)}$, we obtain
\begin{align}\label{eq:geomet}
h_{k+1}^{\frac{p-1}{p}}
=
2^{-\frac{(n-1)(p-1)}{p}}h_{k}^{\frac{p-1}{p}}.
\end{align}
Observe that $h_{k}$ is decreasing in $k$ and that
$
h_{k}\to 0
$
as $k\to\infty$
and
$
h_{k}\to \infty
$
as $k\to-\infty$.
Therefore,
there exists a smallest integer $k_0$ such that
$h_{k_0}\le C_nH(\tau)$
and hence
\begin{align*}
\{k\in\mathbb Z:h_{k}\le C_nH(\tau)\}
=
\{k\in\mathbb Z:k\ge k_0\}.
\end{align*}
From these, \eqref{eq:geomet}, and $p\in (1,\infty)$, we deduce that
\begin{align*}
\sum_{\genfrac{}{}{0pt}{}{k\in\mathbb Z}{
h_{k}\le C_nH(\tau)}}
h_{k}^{\frac{p-1}{p}}
= \sum_{k=k_0}^{\infty}h_{k}^{\frac{p-1}{p}}
=h_{k_0}^{\frac{p-1}{p}}
\sum_{r=0}^{\infty}2^{-r\frac{(n-1)(p-1)}{p}}
\lesssim
h_{k_0}^{\frac{p-1}{p}}
\lesssim
H(\tau)^{\frac{p-1}{p}}.
\end{align*}
This proves \eqref{eq:geometric-h}.

Combining \eqref{eq:boxing-intermediate}, \eqref{eq:geometric-h}, and
Tonelli's theorem, we obtain
\begin{align}\label{eq:BjkH}
\sum_{k\in\mathbb Z}\left[\sum_{Q\in\mathcal F_k}(h_Q w_Q)^p\right]^{\frac{1}{p}}
\lesssim
\int_0^\infty H(\tau)\,d\tau.
\end{align}
Finally, by the layer-cake formula on
$(\partial^*E,{\mathcal H}^{n-1})$ and Tonelli's theorem,
we have
\begin{align*}
\int_0^\infty H(\tau)\,d\tau
&=
\int_0^\infty
{\mathcal H}^{n-1}\left(
\{z\in\partial^*E:w(z)>\tau\}
\right)\,d\tau\\
&=
\int_0^\infty
\int_{\partial^*E}
{\mathbf 1}_{\{\tau<w(z)\}}
\,d{\mathcal H}^{n-1}(z)\,d\tau\\
&=
\int_{\partial^*E}
\int_0^\infty
{\mathbf 1}_{\{\tau<w(z)\}}
\,d\tau\,d{\mathcal H}^{n-1}(z)
=
\int_{\partial^*E}
w(z)\,d{\mathcal H}^{n-1}(z),
\end{align*}
which, together with \eqref{eq:BjkH}, implies \eqref{eq:supercritical-counting}.
This completes the proof of Lemma~\ref{lem:supercritical-counting}.
\end{proof}

\subsection{Proof of Proposition~\ref{pro:11}}\label{sec:main-proof}

We now connect the geometric counting lemma with the target measure.
The following local estimate combines relative isoperimetry with the
definition of $M\mu$ and produces the factor $2^{-{\frac{j}{n}}}$. We state it for
$s\in (1-n,1)$ so that the same argument also applies to the positive
potential in Section~\ref{sec:alpha-greater-one}.

\begin{lemma}\label{lem:local-charge}
Let $E\subset {\mathbb R}^n$ be of finite perimeter
and $\mu$ be a locally finite nonnegative Borel measure.
Let $s\in (1-n,1)$, $p:=\frac{n}{n-1+s}\in (1,\infty)$,
$w:=(M\mu)^{\frac{1}{p}}$, $\nu\in\{0,\frac{1}{3},\frac{2}{3}\}^n$,
$j\in \mathbb{N}$, $\mathcal G^\nu_j$ be as in \eqref{eq:Fj},
and $Q\in\mathcal G^\nu_j$.
Define
\begin{align*}
h_Q:=2^{-j\frac{n-1}{n}}\ell(Q)^{n-1},
\quad
w_Q:=\mathop{\rm ess\,inf}_{\partial^*E\cap Q^\circ}w,
\quad \text{and}
\quad
\lambda_Q:=2^{-j}\ell(Q)^{-s}.
\end{align*}
Then
\begin{align}\label{eq:local-perimeter}
h_Q \lesssim {\mathcal H}^{n-1}(\partial^*E\cap Q^\circ),\quad
\mu(Q)\lesssim \ell(Q)^nw_Q^p,
\end{align}
and
\begin{align}\label{eq:density-gain}
\lambda_Q\mu(Q)^{\frac{1}{p}}
\lesssim 2^{-\frac{j}{n}}h_Q w_Q,
\end{align}
where the implicit positive constants depend only on $n$.
\end{lemma}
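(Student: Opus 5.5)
The plan is to prove the three inequalities in turn, each directly from the definitions. The only inputs are the relative isoperimetric inequality (Lemma~\ref{lem:iso}) and the fact that $M\mu$ is comparable to $\mu(Q)/|Q|$ at every point of $Q$.

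\emph{First inequality of \eqref{eq:local-perimeter}.} Since $Q\in\mathcal G^\nu_j$, we have $m_Q>2^{-j-1}$. Hence
\[
\min\{|E\cap Q|,|Q\setminus E|\}=m_Q|Q|>2^{-j-1}\ell(Q)^n .
\]
Raising this to the power $\frac{n-1}{n}$ and applying Lemma~\ref{lem:iso} gives
\[
2^{-(j+1)\frac{n-1}{n}}\ell(Q)^{n-1}\le C_n\,{\mathcal H}^{n-1}(\partial^*E\cap Q^\circ),
\]
which is $h_Q\lesssim{\mathcal H}^{n-1}(\partial^*E\cap Q^\circ)$ with a constant depending only on $n$. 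In particular the right-hand side is positive, so $\partial^*E\cap Q^\circ$ has positive ${\mathcal H}^{n-1}$ measure and the essential infimum $w_Q$ is taken over a non-null set.

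\emph{Second inequality of \eqref{eq:local-perimeter}.} Fix any $z\in Q^\circ$. The ball $B(z,\sqrt n\,\ell(Q))$ contains $Q$, so
\[
M\mu(z)\ge \frac{\mu(Q)}{|B(z,\sqrt n\,\ell(Q))|}\gtrsim \frac{\mu(Q)}{\ell(Q)^n}.
\]
This lower bound holds at every point of $\partial^*E\cap Q^\circ$. Taking the ${\mathcal H}^{n-1}$-essential infimum over this non-null set yields $w_Q^p\gtrsim \mu(Q)\ell(Q)^{-n}$, that is, $\mu(Q)\lesssim \ell(Q)^n w_Q^p$. The identity $w^p=M\mu$ holds because $w=(M\mu)^{1/p}$. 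If $w_Q=\infty$ the inequality is trivial.

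\emph{Proof of \eqref{eq:density-gain}.} The second inequality gives
\[
\lambda_Q\mu(Q)^{1/p}\lesssim 2^{-j}\ell(Q)^{-s}\ell(Q)^{n/p}w_Q .
\]
Since $n/p=n-1+s$, this equals $2^{-j}\ell(Q)^{n-1}w_Q$. By the definition of $h_Q$,
\[
2^{-j}\ell(Q)^{n-1}=2^{-j}\cdot 2^{j\frac{n-1}{n}}h_Q=2^{-j/n}h_Q .
\]
So the main gain $2^{-j/n}$ comes purely from the exponent bookkeeping between $2^{-j}$ in $\lambda_Q$ and $2^{-j\frac{n-1}{n}}$ in $h_Q$.

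The only delicate point is that $w_Q$ is an essential infimum. This needs ${\mathcal H}^{n-1}(\partial^*E\cap Q^\circ)>0$, which the first step supplies, together with the fact that the pointwise lower bound for $M\mu$ holds everywhere on $Q^\circ$. Note that $p\in(1,\infty)$ is used only to make $w=(M\mu)^{1/p}$ meaningful, and the range $s\in(1-n,1)$ plays no further role. All constants depend only on $n$, apart from the harmless dependence of the bound $M\mu(z)\gtrsim\mu(Q)\ell(Q)^{-n}$ after raising to the power $1/p$, which contributes only a factor $C_n^{1/p}\le C_n$.
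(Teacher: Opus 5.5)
Your proposal is correct and follows essentially the same route as the paper. The paper also obtains the first bound from the relative isoperimetric inequality with $m_Q>2^{-j-1}$, the second from $Q\subset B(z,C_n\ell(Q))$ for $z\in Q$ followed by an essential infimum over $\partial^*E\cap Q^\circ$, and the density gain from the identity $n/p=n-1+s$. Your observation that the first bound forces ${\mathcal H}^{n-1}(\partial^*E\cap Q^\circ)>0$, so that $w_Q$ is an essential infimum over a non-null set, is a small detail the paper leaves implicit.
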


\begin{proof}
By the assumption $Q\in\mathcal G^\nu_j$, we have
$m_Q\sim 2^{-j}$.
From this and
the relative isoperimetric inequality \eqref{eq:reiso}, it follows that
\begin{align*}
h_Q=2^{-j\frac{n-1}{n}}\ell(Q)^{n-1}
\lesssim
m_Q^{\frac{n-1}{n}}\ell(Q)^{n-1}
\lesssim
{\mathcal H}^{n-1}(\partial^*E\cap Q^\circ).
\end{align*}
Moreover, there exists a
positive constant $C_n$ depending only on $n$ such that,
for any $z\in Q$,
$Q\subset B(z,C_n\ell(Q))$.
Using this and the definitions of $M\mu$ and $w$,
we find that, for any $z\in Q$,
\begin{align*}
\mu(Q)\le
\mu \left(B(z,C_n\ell(Q))\right)
\le
C_n\ell(Q)^nM\mu(z)=C_n\ell(Q)^nw(z)^p.
\end{align*}
Taking the essential infimum over $z\in \partial^*E\cap Q^\circ$
proves \eqref{eq:local-perimeter}.
Finally, by \eqref{eq:local-perimeter}
and $p=\frac{n}{n-1+s}$, we have
\begin{align*}
\lambda_Q\mu(Q)^{\frac{1}{p}}
\lesssim
2^{-j}\ell(Q)^{-s}\ell(Q)^{\frac{n}{p}}w_Q
=2^{-j}\ell(Q)^{n-1} w_Q
=2^{-\frac{j}{n}}h_Q w_Q,
\end{align*}
which is \eqref{eq:density-gain}.
This completes the proof of Lemma~\ref{lem:local-charge}.
\end{proof}

Next, we prove Proposition~\ref{pro:11} by using the lemmas established above.

\begin{proof}[Proof of Proposition~\ref{pro:11}]
The case $\mu\equiv0$ is immediate. We now assume that
$\mu$ is not identically zero. Let $w:=(M\mu)^{\frac{1}{p}}$ and assume that
\begin{align*}
W:=\int_{\partial^*E}w\,d{\mathcal H}^{n-1}<\infty.
\end{align*}
Let $\mathcal G^\nu_j$ and $G^\nu_j$ be as in
\eqref{eq:Fj} and \eqref{eq:Gj-definition} for the grid ${\mathscr D}^\nu$.
By the Lebesgue density theorem
(see, for instance, \cite[Theorem 1.35]{EvansGariepy2015}),
we find that, for any cube $Q$,
$|E\cap Q|=|E^1\cap Q|$.
Applying Lemma~\ref{lem:dyadic-reduction} to $E^1$ and using this
identity, Lemma~\ref{lem:last-crossing}, and
Lemma~\ref{lem:zero-order-crossing}, we conclude that
\begin{align}\label{eq:ffmu}
\mathcal D^s{\mathbf 1}_{E^1}(x)
\lesssim\sum_{\nu\in\{0,\frac{1}{3},\frac{2}{3}\}^n}
\sum_{j\in{\mathbb N}}G_j^\nu(x)
\end{align}
holds for any $x\in E^0\cup E^1$ when $s\in (0,1)$ and
holds for any $x\in {\mathbb R}^n$ when $s=0$.
For any $\nu\in\{0,\frac{1}{3},\frac{2}{3}\}^n$,
$j\in {\mathbb N}$, and $k\in {\mathbb Z}$,
let
\begin{align*}
\mathcal{G}^\nu_{j,k}:= \left\{Q\in \mathcal{G}^{\nu}_{j}: \ell (Q)=2^{-k}\right\}.
\end{align*}
For each fixed $k$, the half-open cubes in $\mathcal{G}^\nu_{j,k}$
are disjoint. Then, for any $j\in{\mathbb N}$ and $x\in{\mathbb R}^n$,
\begin{align*}
G_j^\nu(x)&=
\sup_{k\in {\mathbb Z}}\sup_{Q\in\mathcal G_{j,k}^\nu}
2^{-j}\ell(Q)^{-s}{\mathbf 1}_Q(x)
= \sup_{k\in {\mathbb Z}}\sum_{Q\in\mathcal G_{j,k}^\nu}
2^{-j}\ell(Q)^{-s}{\mathbf 1}_Q(x)\\
&\le
\sum_{k\in {\mathbb Z}}\sum_{Q\in\mathcal G_{j,k}^\nu}
2^{-j}\ell(Q)^{-s}{\mathbf 1}_Q(x).
\end{align*}
Applying Lemma \ref{lem:HH} with $\mathcal F:=\mathcal{G}^\nu_{j,k}$,
Lemma \ref{lem:supercritical-counting} with $\mathcal F:=\mathcal{G}^\nu_{j}$,
and Lemma \ref{lem:local-charge}, we find that,
for any $\nu\in\{0,\frac{1}{3},\frac{2}{3}\}^n$ and $j\in {\mathbb N}$,
\begin{align}\label{eq:GWW}
\|G_j^\nu\|_{L^{p,1}(\mu)}
\le
\sum_{k\in {\mathbb Z}}
\left\|\sum_{Q\in\mathcal G_{j,k}^\nu}
2^{-j}\ell(Q)^{-s}{\mathbf 1}_Q\right\|_{L^{p,1}(\mu)}
\lesssim
2^{-\frac{j}{n}}
\int_{\partial^*E}w\,d{\mathcal H}^{n-1}
=2^{-\frac{j}{n}}W<\infty,
\end{align}
which implies that, for $\mu$-almost every $x\in {\mathbb R}^n$,
$G_j^\nu(x)<\infty$ and hence
\begin{align}\label{eq:mmmu}
\mu \left(\left\{x\in {\mathbb R}^n: G_j^\nu(x)=\infty\right\}\right)=0.
\end{align}

Assume for the moment that $s\in (0,1)$. We next claim that
\begin{align}\label{eq:essential-inclusion}
\partial^eE:=
{\mathbb R}^n\setminus(E^0\cup E^1)
\subset
\bigcup_{\nu\in\{0,\frac{1}{3},\frac{2}{3}\}^n}\bigcup_{j\in {\mathbb N}}
\{x:G_j^\nu(x)=\infty\}.
\end{align}
To do this, we
fix $x\in\partial^eE$.
For any $r\in (0,\infty)$, let
\begin{align*}
\rho(r):=\frac{|E\cap B(x,r)|}{|B(x,r)|}.
\end{align*}
Then there exist $\delta\in (0,\frac{1}{2})$ and
$\{r_m\}_{m\in \mathbb{N}}\subset (0,\infty)$
such that $r_m\downarrow0$ as $m\to\infty$ and, for any $m\in \mathbb{N}$,
\begin{align}\label{eq:RHO}
\delta\le\rho(r_m)\le1-\delta.
\end{align}
Indeed, otherwise $\min\{\rho(r),1-\rho(r)\}\to0$, which,
together with the continuity of $\rho$ on $(0,\infty)$ and
the definitions of $E^0$ and $E^1$, further implies that $x\in E^0\cup E^1$.
This contradicts the assumption $x\in\partial^eE =\mathbb{R}^n\setminus(E^0\cup E^1)$.

For any $m\in \mathbb{N}$, choose an adjacent dyadic cube $Q_m$
containing $B(x,r_m)$ and satisfying $\ell(Q_m)\le C_n r_m$.
From this, \eqref{eq:RHO}, and the definitions of $\rho$ and $m_Q$,
we infer that there exists a positive constant $c_n$,
depending only on $n$, such that, for any $m\in \mathbb{N}$,
$m_{Q_m}\ge c_n\delta.$
Let $J\in \mathbb{N}$ satisfy $2^{-J-1}< c_n\delta\le 2^{-J}$.
Then
\begin{align*}
\{Q_m\}_{m\in \mathbb{N}}\subset
\bigcup_{\nu\in\{0,\frac{1}{3},\frac{2}{3}\}^n}\bigcup_{j=1}^J
\mathcal G^{\nu}_{j}.
\end{align*}
Since there are only finitely many grids and only finitely many layers,
there exist $\nu_0$, $j_0$, and a subsequence $\{Q_{m_i}\}_{i \in \mathbb{N}}$ such that
$Q_{m_i}\in\mathcal G^{\nu_0}_{j_0}$ and $\ell(Q_{m_i})\to0$ as $i\to\infty$. Thus,
\begin{align*}
G_{j_0}^{\nu_0}(x)\ge2^{-j_0}\ell(Q_{m_i})^{-s}\to\infty
\end{align*}
as $i\to \infty$.
This shows \eqref{eq:essential-inclusion}.

Applying \eqref{eq:mmmu} and \eqref{eq:essential-inclusion},
we obtain $\mu(\partial^eE)=0$ when $s\in (0,1)$. Thus,
\eqref{eq:ffmu} holds for $\mu$-almost every $x\in{\mathbb R}^n$
in this case. For $s=0$, the same estimate already holds everywhere.
Combining this, \eqref{eq:ffmu}, and \eqref{eq:GWW}, we conclude that
\begin{align*}
\|\mathcal D^s{\mathbf 1}_{E^1}\|_{L^{p,1}(\mu)}
\lesssim \sum_{\nu\in\{0,\frac{1}{3},\frac{2}{3}\}^n}
\sum_{j\in{\mathbb N}}
\|G_j^\nu\|_{L^{p,1}(\mu)}
\lesssim
\left(\sum_{j\in{\mathbb N}}
2^{-\frac{j}{n}}\right)
W
\lesssim
W.
\end{align*}
In particular, when $s=0$, we have
$\|\mathcal D^0{\mathbf 1}_{E^1}\|_{L^{p,1}(\mu)}=p\mu(E^1)^{1/p}$,
which gives \eqref{eq:endpoint-iso}.
This then completes the proof
of \eqref{eq:characteristic} and hence Proposition~\ref{pro:11}.
\end{proof}

\section{Riesz Potential Estimates for Characteristic Functions}
\label{sec:alpha-greater-one}

The main purpose of this section is to establish the following
Lorentz estimate for Riesz
potentials of characteristic functions, which  will be used to prove
Theorem~\ref{thm:main} when $\alpha\in(1,n)$.

\begin{proposition}\label{pro:22}
Let $\beta\in(0,n-1)$ and $p:={\frac{n}{n-1-\beta}}$. Let $E\subset{\mathbb R}^n$
have finite perimeter and finite Lebesgue measure, and let $\mu$ be a
locally finite nonnegative Borel measure on ${\mathbb R}^n$. Then
\begin{align}\label{eq:potential-characteristic}
\|I_\beta{\mathbf 1}_E\|_{L^{p,1}(\mu)}
\le C_{n,\beta}\int_{\partial^*E}(M\mu)^{\frac{1}{p}}\,d{\mathcal H}^{n-1},
\end{align}
where $C_{n,\beta}$ depends only on $n$ and $\beta$.
\end{proposition}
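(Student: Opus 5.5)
We will follow the scheme of the proof of Proposition~\ref{pro:11}, now with the negative exponent $s:=-\beta\in(1-n,0)$. This is exactly why Lemma~\ref{lem:local-charge} was stated for $s\in(1-n,1)$: with $s=-\beta$ we get $p=\frac{n}{n-1+s}=\frac{n}{n-1-\beta}$, matching the statement.

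\textbf{Step 1: pointwise dyadic bound.} We first derive a pointwise dyadic estimate for $I_\beta{\mathbf 1}_E$. Since $I_\beta{\mathbf 1}_E=I_\beta{\mathbf 1}_{E^1}$ by the Lebesgue density theorem, we fix $x$ and split the kernel into the annuli $A_k(x)$. Using Lemma~\ref{lemCub}(ii) exactly as in Lemma~\ref{lem:dyadic-reduction}, we expect
\begin{align*}
I_\beta{\mathbf 1}_E(x)\lesssim \sum_{\nu}\sum_{Q\in{\mathscr D}^\nu}\ell(Q)^{\beta}e_Q{\mathbf 1}_Q(x).
\end{align*}
Because $|E|<\infty$, the large cubes have $e_Q\le|E|\ell(Q)^{-n}$, so the sum converges at large scales: the terms $\ell(Q)^{\beta}e_Q\lesssim\ell(Q)^{\beta-n}$ are summable.

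\textbf{Step 2: replacing $e_Q$ by $m_Q$.} Next we replace $e_Q$ by $m_Q$ plus a harmless term, splitting the chain $\{Q_k(x)\}_k$ into cubes with $e_Q\le\frac12$, where $e_Q=m_Q$, and cubes with $e_Q>\frac12$. For $x\in E^0$, if the cubes along the chain with $e_Q>\frac12$ exist, they are finitely many small ones. For $x\in E^1$ the small cubes have $e_Q>\frac12$, but then their contribution $\sum\ell(Q)^{\beta}$ over $\ell(Q)\le\ell(R)$ is dominated by $\ell(R)^\beta$, where $R$ is the last cube (going down) with $e_R<\frac12$. This $R$ exists since $e_{Q_k}\to0$ as $k\to-\infty$, and $m_R\ge2^{-n-1}$ as in Lemma~\ref{lem:zero-order-crossing}. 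The crossing cube $R$ thus lies in $\bigcup_{j\le n+1}\mathcal G_j^\nu$. So the large-$e_Q$ part is controlled by $\sum_{j\le n+1}G_j^\nu$, now defined with $s=-\beta$.

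The small-$e_Q$ part is the main obstacle. Its terms are $\sum_{Q\ni x}\ell(Q)^{\beta}m_Q$, and with $\beta>0$ the geometric sums go the wrong way: summing $2^{-k\beta}$ is dominated by the largest cube in each layer $\mathcal G_j^\nu$, not the smallest. I would try to control this sum by, for each $j$, the largest cube of $\mathcal G^\nu_j$ containing $x$ with $e_Q\le\frac12$. Such a largest cube exists because $m_{Q_k}\le e_{Q_k}\to0$ as $k\to-\infty$ forces large cubes out of $\mathcal G_j^\nu$. This gives
\begin{align*}
\sum_{Q\in\mathcal A_j(x)}\ell(Q)^\beta m_Q\lesssim 2^{-j}\sup_{Q\in\mathcal G^\nu_j,\,x\in Q}\ell(Q)^{\beta}=G_j^\nu(x).
\end{align*}
The point to check is that the supremum in \eqref{eq:Gj-definition} with $s=-\beta$ already selects the largest cube, so the same $G_j^\nu$ works and
\begin{align*}
I_\beta{\mathbf 1}_E\lesssim\sum_\nu\sum_j G_j^\nu
\end{align*}
holds everywhere. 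No essential-boundary argument is needed, since $G_j^\nu$ is finite wherever the layer is bounded in scale.

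\textbf{Step 3: Lorentz bound.} Finally, we repeat \eqref{eq:GWW} verbatim. We bound $G_j^\nu\le\sum_k\sum_{Q\in\mathcal G^\nu_{j,k}}\lambda_Q{\mathbf 1}_Q$ and apply Lemma~\ref{lem:HH} to each scale, using \eqref{eq:density-gain} with $C_0\sim2^{-j/n}$. We then sum over $k$ via Lemma~\ref{lem:supercritical-counting}, whose hypothesis \eqref{eq:HQQ} is supplied by \eqref{eq:local-perimeter} with $a=2^{-j(n-1)/n}$; this is valid because $p>1$. This yields
\begin{align*}
\|G_j^\nu\|_{L^{p,1}(\mu)}\lesssim2^{-j/n}\int_{\partial^*E}(M\mu)^{1/p}\,d{\mathcal H}^{n-1}.
\end{align*}
Summing the geometric factor $2^{-j/n}$ over $j$ and the finitely many $\nu$ gives \eqref{eq:potential-characteristic}. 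We use the quasi-triangle inequality of $L^{p,1}$, which holds up to a constant since $L^{p,1}$ is normable for $p>1$.
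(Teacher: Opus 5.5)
Your overall architecture matches the paper's: dyadic domination by $\sum_Q\ell(Q)^\beta e_Q\mathbf 1_Q$, removal of the cubes with $e_Q>\frac12$, then the layers $2^{-j-1}<e_Q\le2^{-j}$ handled by Lemma~\ref{lem:HH}, Lemma~\ref{lem:local-charge} with $s=-\beta$, and Lemma~\ref{lem:supercritical-counting}. Your treatment of the low-density cubes is also correct.

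\textbf{The gap.} It lies in the removal of the high-density cubes. You take $R$ to be the smallest cube of the chain with $e_R<\frac12$, i.e.\ the crossing of Lemma~\ref{lem:zero-order-crossing}. This controls only the high-density cubes below $R$. It says nothing about cubes $Q\supsetneq R$ with $e_Q>\frac12$, which can occur because the density along the chain need not be monotone. For $x\in E^0$ you only observe that the high-density cubes are finitely many, which is not a quantitative bound. Points $x\in\partial^eE$ are not treated at all.

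The last omission cannot be skipped. For $s=-\beta<0$ there is no analogue of the argument giving $\mu(\partial^eE)=0$. In fact $\mu(\partial^eE)$ can be positive while the right-hand side of \eqref{eq:potential-characteristic} is finite. Take $E=B(\mathbf 0,1)$ and $\mu=\delta_{x_0}$ with $|x_0|=1$. Then $(M\mu)^{1/p}\sim|\cdot-x_0|^{-(n-1-\beta)}$, which is $\mathcal H^{n-1}$-integrable on the sphere, while the left-hand side equals $p\,I_\beta\mathbf 1_E(x_0)>0$. So your assertions that the domination ``holds everywhere'' and that ``no essential-boundary argument is needed'' are not supported by what you wrote. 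Finiteness of $G_j^\nu$ is not the issue; what is needed is a pointwise bound at every point.

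\textbf{The fix.} This is what the paper does in Lemma~\ref{lem:REpoint}: use the top crossing instead of the bottom one. Since $|E|<\infty$, every $Q\ni x$ with $e_Q>\frac12$ has $\ell(Q)<(2|E|)^{1/n}$. Hence there is a maximal such cube $Q_0$, and every high-density cube containing $x$ is a dyadic descendant of $Q_0$. Its parent $P$ satisfies $2^{-n-1}<2^{-n}e_{Q_0}\le e_P\le\frac12$. Because $\beta>0$,
\begin{align*}
\sum_{Q\in{\mathscr D}^\nu,\,Q\ni x,\,e_Q>\frac12}\ell(Q)^\beta e_Q
\le\sum_{r=0}^\infty\left(2^{-r}\ell(Q_0)\right)^\beta
\lesssim\ell(P)^\beta e_P,
\end{align*}
which is a single low-density term. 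This works for every $x\in{\mathbb R}^n$, with no case distinction between $E^0$, $E^1$, and $\partial^eE$.

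\textbf{A minor remark.} Your detour through the supremum $G_j^\nu$, selecting the largest cube in each layer, is correct but unnecessary. The paper simply bounds $\ell(Q)^\beta e_Q\le\lambda_Q=2^{-j}\ell(Q)^\beta$ on $\mathcal G^+_{j,k}$ and sums over $j$ and $k$ directly. Your Step~3 does exactly this anyway once it re-expands the supremum into a sum over scales.
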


To show Proposition~\ref{pro:22}, we need the following
pointwise estimate.
\begin{lemma}\label{lem:REpoint}
Let $\beta\in (0,n-1)$ and $E\subset \mathbb{R}^n$ be a Lebesgue measurable set with
$|E|<\infty$.
Then there exists a positive constant $C_{n,\beta}$,
depending only on $n$ and $\beta$, such that, for any $x\in{\mathbb R}^n$,
\begin{align}\label{eq:potential-low-density-domination}
I_\beta{\mathbf 1}_E(x)
\le C_{n,\beta}
\sum_{\nu\in\{0,\frac{1}{3},\frac{2}{3}\}^n}
\sum_{\genfrac{}{}{0pt}{}{Q\in{\mathscr D}^\nu}{0<e_Q\le{\frac{1}{2}}}}
\ell(Q)^\beta e_Q{\mathbf 1}_Q(x),
\end{align}
where $e_Q:={\frac{|E\cap Q|}{|Q|}}$ is
as in \eqref{eq:density}.
\end{lemma}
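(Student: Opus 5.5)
Fix $x\in{\mathbb R}^n$. The plan is to decompose the Riesz potential into dyadic annuli, as in the proof of Lemma~\ref{lem:dyadic-reduction}, and then discard the high-density scales using the finiteness of $|E|$ and a stopping argument. For any $k\in\mathbb Z$, set $A_k(x):=\{y:2^k\le|x-y|<2^{k+1}\}$. By Lemma~\ref{lemCub}(ii), choose $Q_k\in\bigcup_\nu{\mathscr D}^\nu$ with $B(x,2^{k+1})\subset Q_k$ and $\ell(Q_k)\sim2^k$. Then
\begin{align*}
I_\beta{\mathbf 1}_E(x)\lesssim\sum_{k\in\mathbb Z}2^{-k(n-\beta)}|E\cap B(x,2^{k+1})|
\lesssim\sum_{k\in\mathbb Z}\ell(Q_k)^{\beta}e_{Q_k},
\end{align*}
and each selected cube appears only boundedly many times because $\ell(Q_k)\sim 2^k$. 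This series converges at large scales, since $e_{Q_k}\le|E|\ell(Q_k)^{-n}$ and $\beta<n$. Terms with $e_{Q_k}=0$ vanish. So it remains to handle the scales $k$ for which $e_{Q_k}>\frac12$.

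The key step is to treat the high-density cubes by comparison with the ball sequence rather than the cubes. Since $|E|<\infty$, the densities $|E\cap B(x,2^{k+1})|/|B(x,2^{k+1})|$ tend to $0$ as $k\to\infty$. Let $K$ be the largest $k$ with $e_{Q_k}>\frac12$; if no such $k$ exists, we are done. For every $k\le K$, I will bound the whole tail trivially:
\begin{align*}
\sum_{k\le K}2^{-k(n-\beta)}|E\cap B(x,2^{k+1})|\lesssim\sum_{k\le K}2^{k\beta}\lesssim 2^{K\beta}.
\end{align*}
This uses $\beta>0$, and it is precisely where the restriction $\beta>0$ enters. The task is then to charge $2^{K\beta}$ to a single low-density cube containing $x$ at a comparable scale.

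To do this, I use $x$'s own dyadic chain in one fixed grid, say ${\mathscr D}^{0}$. Let $P$ be a cube of ${\mathscr D}^0$ containing $x$ with $\ell(P)\sim2^{K}$ large enough that $Q_K\subset 3P$, say. Then walk up its ancestors $P=P_0\subset P_1\subset\cdots$. Because $|E|<\infty$, we have $e_{P_i}\to0$, so there is a first ancestor $P_{i_0}$ with $e_{P_{i_0}}\le\frac12$. The parent-child argument of Lemma~\ref{lem:zero-order-crossing} then gives $e_{P_{i_0}}\ge2^{-n-1}$, provided $i_0\ge1$. If $i_0=0$, a lower bound for $e_{P_0}$ follows instead from $|E\cap Q_K|>\frac12|Q_K|$ and $|P_0|\sim|Q_K|$. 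In both cases $0<e_{P_{i_0}}\le\frac12$ and $e_{P_{i_0}}\gtrsim1$.

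The main obstacle is that $i_0$ might be large, so that $\ell(P_{i_0})\gg2^K$. In that case $\ell(P_{i_0})^\beta e_{P_{i_0}}\gtrsim 2^{K\beta}$ still holds, since $\beta>0$ and $\ell(P_{i_0})\ge\ell(P_0)$. So a larger stopping cube is harmless, which is why I expect the argument to close. Adding this single term $\ell(P_{i_0})^\beta e_{P_{i_0}}{\mathbf 1}_{P_{i_0}}(x)$ to the sum over low-density cubes yields \eqref{eq:potential-low-density-domination}, with constants depending only on $n$ and $\beta$.
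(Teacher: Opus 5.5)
Your reduction to dyadic annuli, your treatment of the scales $k>K$, and the trivial bound $\sum_{k\le K}2^{-k(n-\beta)}|E\cap B(x,2^{k+1})|\lesssim 2^{K\beta}$ are all fine. The gap is in the charging step, in the case $i_0=0$. From $Q_K\subset 3P_0$ and $|E\cap Q_K|>\frac12|Q_K|$ you only learn that $3P_0$, which is not a dyadic cube, has density $\gtrsim1$. Nothing forces $E\cap Q_K$ to meet $P_0$ itself, so no lower bound for $e_{P_0}$ follows.

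In fact the chain through $x$ in ${\mathscr D}^0$ may carry no density at all. Take $x=\mathbf 0$ and $E:=B(\mathbf 0,R)\setminus[0,\infty)^n$ with $R$ large.
\begin{itemize}
\item Every cube of ${\mathscr D}^0$ containing $\mathbf 0$ has the form $[0,2^j)^n$. Hence $e_{P_i}=0$ along the entire chain, and $P_{i_0}=P_0$ does not even appear on the right-hand side of \eqref{eq:potential-low-density-domination}.
\item On the other hand, a cube of an adjacent grid containing a ball $B(\mathbf 0,r)$ has every coordinate interval of the form $2^j[c,c+1)$ with $c\in\{-\frac13,-\frac23\}$. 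So at most a fraction $(\frac23)^n\le\frac49$ of it lies in $[0,\infty)^n$.
\item Consequently $e_{Q_k}>\frac12$ at all sufficiently small scales, so $K$ exists. Your argument then has a positive quantity $2^{K\beta}$ with nowhere to go.
\end{itemize}

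The fix is to walk up the ancestors of $Q_K$ in its own grid ${\mathscr D}^{\nu_K}$, rather than a chain in ${\mathscr D}^0$. Since $e_{Q_K}>\frac12$, and the densities of the ancestors tend to $0$ because $|E|<\infty$, the first ancestor $P$ with $e_P\le\frac12$ is a strict ancestor. The parent--child comparison then gives $2^{-n-1}<e_P\le\frac12$. Moreover $x\in Q_K\subset P$ and $\ell(P)\ge2\ell(Q_K)\gtrsim2^K$, so
\[
2^{K\beta}\lesssim\ell(P)^\beta e_P{\mathbf 1}_P(x).
\]
With this change your argument closes. It is then essentially the paper's proof: for each grid ${\mathscr D}^\nu$, the paper takes the maximal cube $Q_0\ni x$ with $e_{Q_0}>\frac12$ and charges the geometric sum over its descendants to its parent $P$, for which $2^{-n-1}<e_P\le\frac12$.
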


\begin{proof}
Fix $x\in{\mathbb R}^n$.
By the definition of the Riesz potential, we find that
\begin{align}\label{eq:IIB}
I_\beta{\mathbf 1}_E(x)
&=\frac{1}{\gamma(\beta)}
\int_{\mathbb{R}^n}\frac{{\mathbf 1}_{E}(y)}{|x-y|^{n-\beta}}\,dy\notag\\
&=\frac{1}{\gamma(\beta)}
\sum_{m\in\mathbb Z}
\int_{E\cap \{y\in{\mathbb R}^n:2^m\le |x-y|<2^{m+1}\}}|x-y|^{\beta-n}\,dy\notag\\
&\lesssim \sum_{m\in\mathbb Z}
2^{m(\beta-n)}|E\cap B(x,2^{m+1})|.
\end{align}
From
Lemma \ref{lemCub}(ii), we deduce that
there exist
$\nu_m\in\{0,\frac{1}{3},\frac{2}{3}\}^n$ and some cube
$Q_m\in{\mathscr D}^{\nu_m}$ such that
\begin{align*}
B(x,2^{m+1})\subset Q_m
\quad \text{and}\quad
\ell(Q_m)\sim 2^m
\end{align*}
with positive equivalence constants depending only on $n$.
Using this and $e_Q={\frac{|E\cap Q|}{|Q|}}$,
we conclude that, for any $m\in \mathbb{Z}$,
\begin{align*}
2^{m(\beta-n)}|E\cap B(x,2^{m+1})|
\le 2^{m(\beta-n)}|E\cap Q_m|
=2^{m(\beta-n)}e_{Q_m}\ell(Q_m)^n
\lesssim \ell(Q_m)^\beta e_{Q_m}.
\end{align*}
Since $\ell(Q_m)\sim2^m$, each selected cube can occur only
a bounded number of times, with a bound depending on $n$.
Summing the preceding estimate and using \eqref{eq:IIB}, we obtain
\begin{align}\label{eq:potential-all-dyadic-cubes}
I_\beta{\mathbf 1}_E(x)
\lesssim \sum_{\nu\in\{0,\frac{1}{3},\frac{2}{3}\}^n}
\sum_{Q\in{\mathscr D}^\nu}
\ell(Q)^\beta e_Q{\mathbf 1}_Q (x).
\end{align}
Fix $\nu\in\{0,\frac{1}{3},\frac{2}{3}\}^n$. Define
\begin{align*}
\mathcal J^\nu(x):=
\left\{Q\in{\mathscr D}^\nu:x\in Q,\ e_Q>{\frac{1}{2}}\right\}.
\end{align*}
If $\mathcal J^\nu(x)$ is empty, the contribution of the high-density
cubes is zero. Suppose therefore that $\mathcal J^\nu(x)\ne\emptyset$.
Since $|E|<\infty$, every $Q\in\mathcal J^\nu(x)$ satisfies
\begin{align*}
\frac{1}{2}<e_Q
=\frac{|E\cap Q|}{\ell(Q)^n}
\le\frac{|E|}{\ell(Q)^n},
\end{align*}
which implies
$\ell(Q)<(2|E|)^{{\frac{1}{n}}}<\infty.$
From this and the definition of ${\mathscr D}^\nu$, it follows that
there exists a unique maximal cube
$Q_0\in\mathcal J^\nu(x)$ such that, for any $Q\in \mathcal J^\nu(x)$, $Q\subset Q_0$.

Let $P$ be the dyadic parent of $Q_0$. Then
$x\in Q_0\subset P$ and $\ell(P)=2\ell(Q_0)$.
By the maximality of $Q_0$, we obtain
$P\notin \mathcal J^\nu(x)$ and hence
$e_P\le\frac{1}{2}.$
On the other hand, from $Q_0\subset P$ and $|P|=2^n|Q_0|$, we infer that
\begin{align*}
e_P
=\frac{|E\cap P|}{|P|}
\ge\frac{|E\cap Q_0|}{2^n|Q_0|}
=2^{-n}e_{Q_0}
>2^{-n-1}.
\end{align*}
Thus, $2^{-n-1}<e_P\le\frac{1}{2}.$
Applying this and the fact that
every cube in $\mathcal J^\nu(x)$ is a descendant of $Q_0$, we
conclude that
\begin{align*}
\sum_{Q\in\mathcal J^\nu(x)}\ell(Q)^\beta e_Q
\le\sum_{r=0}^\infty
\left(2^{-r}\ell(Q_0)\right)^\beta
=\frac{\ell(Q_0)^\beta}{1-2^{-\beta}}
=\frac{2^{-\beta}}{1-2^{-\beta}}\ell(P)^\beta
\le\frac{2^{n+1-\beta}}{1-2^{-\beta}}
\ell(P)^\beta e_P,
\end{align*}
which further implies that
\begin{align*}
\sum_{Q\in{\mathscr D}^\nu,\,e_Q>{\frac{1}{2}}}
\ell(Q)^\beta e_Q {\mathbf 1}_Q (x)
\lesssim
\sum_{Q\in{\mathscr D}^\nu,\, 0<e_Q\le{\frac{1}{2}}}
\ell(Q)^\beta e_Q {\mathbf 1}_Q (x).
\end{align*}
From this and
\eqref{eq:potential-all-dyadic-cubes},
we deduce \eqref{eq:potential-low-density-domination}.
This completes the proof of Lemma \ref{lem:REpoint}.
\end{proof}

Next, we prove Proposition~\ref{pro:22} by using Lemma
\ref{lem:REpoint}.

\begin{proof}[Proof of Proposition \ref{pro:22}]
The case $\mu\equiv0$ is immediate. We now assume that
$\mu$ is not identically zero.
Let $w:=(M\mu)^{\frac{1}{p}}$ and assume that
\begin{align*}
W:=\int_{\partial^*E}w\,d{\mathcal H}^{n-1}<\infty.
\end{align*}
Applying Lemma \ref{lem:REpoint}, we obtain
\begin{align}\label{eq:IEE}
\|I_\beta{\mathbf 1}_E\|_{L^{p,1}(\mu)}
&\lesssim\sum_{\nu\in\{0,\frac{1}{3},\frac{2}{3}\}^n}
\left\|
\sum_{\genfrac{}{}{0pt}{}{Q\in{\mathscr D}^\nu}{0<e_Q\le{\frac{1}{2}}}}
\ell(Q)^\beta e_Q{\mathbf 1}_Q
\right\|_{L^{p,1}(\mu)}.
\end{align}
Fix $\nu\in\{0,\frac{1}{3},\frac{2}{3}\}^n$.
For any $j\in \mathbb{N}$ and
$k\in\mathbb Z$, define
\begin{align*}
\mathcal G_j^+
:=\left\{Q\in{\mathscr D}^\nu:2^{-j-1}<e_Q\le2^{-j}\right\}
\quad\text{and}\quad
\mathcal G_{j,k}^+
:=\left\{Q\in\mathcal G_j^+:\ell(Q)=2^{-k}\right\}.
\end{align*}
If $Q\in\mathcal G_j^+$, then $e_Q\le2^{-j}\le{\frac{1}{2}}$ and
hence $m_Q=\min\{e_Q,1-e_Q\}=e_Q$. From this and
\eqref{eq:Fj}, it follows that
$\mathcal G_j^+\subset\mathcal G_j^\nu$, where $\mathcal G_j^\nu$ is as in \eqref{eq:Fj}.
For any $Q\in\mathcal G_j^+$, let
\begin{align*}
w_Q:=\mathop{\rm ess\,inf}_{\partial^*E\cap Q^\circ}w,
\quad
h_Q:=2^{-j\frac{n-1}{n}}\ell(Q)^{n-1},\quad
\text{and}
\quad \lambda_Q:=2^{-j}\ell(Q)^\beta.
\end{align*}
By this, the assumption $p={\frac{n}{n-1-\beta}}$,
and Lemma \ref{lem:local-charge} with $s:=-\beta$, we have
\begin{align}\label{eq:potential-density-gain}
\lambda_Q\mu(Q)^{\frac{1}{p}}
\lesssim
2^{-\frac{j}{n}}h_Q w_Q.
\end{align}
For fixed $j$ and $k$, the quantities $\lambda_Q$ have the common
value $2^{-j}2^{-k\beta}$ on $\mathcal G_{j,k}^+$. Moreover,
$\ell(Q)^\beta e_Q\le\lambda_Q$ on this family. Thus,
Lemma~\ref{lem:HH}, \eqref{eq:potential-density-gain}, and
Lemma~\ref{lem:supercritical-counting} give
\begin{align*}
\left\|
\sum_{\genfrac{}{}{0pt}{}{Q\in{\mathscr D}^\nu}{0<e_Q\le{\frac{1}{2}}}}
\ell(Q)^\beta e_Q{\mathbf 1}_Q
\right\|_{L^{p,1}(\mu)}
&\le
\sum_{j=1}^\infty\sum_{k\in{\mathbb Z}}
\left\|\sum_{Q\in\mathcal G_{j,k}^+}
\lambda_Q{\mathbf 1}_Q\right\|_{L^{p,1}(\mu)}\\
&\lesssim
\sum_{j=1}^\infty 2^{-{\frac{j}{n}}}\sum_{k\in{\mathbb Z}}
\left[\sum_{Q\in\mathcal G_{j,k}^+}(h_Qw_Q)^p\right]^{\frac{1}{p}}\\
&\lesssim W\sum_{j=1}^\infty2^{-{\frac{j}{n}}}
\lesssim W.
\end{align*}
This, combined with \eqref{eq:IEE}, implies
\eqref{eq:potential-characteristic}, and hence completes the proof
of Proposition~\ref{pro:22}.
\end{proof}

\section{Proofs of Theorem \ref{thm:main} and Its Corollaries}\label{sec:proofs}
In this section,
we present the proofs of the main results.
We first prove Theorem \ref{thm:main}
by using Propositions~\ref{pro:11} and \ref{pro:22}.

\begin{proof}[Proof of Theorem \ref{thm:main}]
Let $f\in C_{\rm c}^\infty({\mathbb R}^n)$, $u:=I_1f$, and
$w:=(M\mu)^{\frac{1}{p}}$. We may assume that
$\int_{{\mathbb R}^n}|Rf|w\,dx<\infty$.
Then $u\in C^\infty({\mathbb R}^n)$ and, for any $x\in{\mathbb R}^n$,
\begin{align}\label{eq:uuu}
|u(x)|\le C_f(1+|x|)^{1-n},
\end{align}
where $C_f$ is a positive constant depending on $f$.
For any $t\ne0$, define
\begin{align*}
A_t:=
\begin{cases}
\{u>t\}, & t\in(0,\infty),\\
\{u<t\}, & t\in(-\infty,0),
\end{cases}
\end{align*}
and let $A_0:=\emptyset$. Also, let $E_t:=\{|u|>t\}$ for any
$t\in(0,\infty)$. By \eqref{eq:uuu}, we find that the sets $A_t$ and $E_t$ are bounded.
From Sard's theorem (see, for instance, \cite[Theorem~6.10]{Lee2013}),
we infer that, for
almost every $t\in{\mathbb R}$,
$A_t$
has compact smooth boundary (possibly empty),
which further implies that $A_t$ has finite perimeter,
\begin{align}\label{eq:regular-level-properties}
A_t=A_t^1,
\quad\text{and}\quad
\partial^*A_t=\partial^*\{u>t\}=\{u=t\}.
\end{align}
Since $E_t=A_t\cup A_{-t}$, we deduce that $E_t$
has finite perimeter for almost every $t\in(0,\infty)$.
We then consider the following three cases for $\alpha$.

\emph{Case 1: $\alpha\in(0,1)$.}
In this case, let $s:=1-\alpha\in(0,1)$.
By the Fourier multiplier representations of Riesz potentials
and the fractional Laplacian
(see \cite{Stein1970} and \cite[Proposition~3.3]{DPV2012}),
we have, in the sense of tempered distributions,
\begin{align}\label{eq:IIFF}
I_\alpha f
=(-\Delta)^{\frac{s}{2}}I_1f
=(-\Delta)^{\frac{s}{2}}u.
\end{align}
Moreover, $Rf=\nabla I_1f=\nabla u$.
The singular integral representation of the fractional Laplacian is
\begin{align}\label{eq:lpls}
(-\Delta)^{\frac{s}{2}}u(x)
=c_{n,s}\,{\rm p.v.}\int_{{\mathbb R}^n}
\frac{u(x)-u(y)}{|x-y|^{n+s}}\,dy,
\end{align}
where $c_{n,s}$ is the standard positive normalization constant
depending only on $n$ and $s$.
Since $u\in C^\infty({\mathbb R}^n)$ is bounded and since $s\in(0,1)$,
it follows that, for any $x\in{\mathbb R}^n$,
\begin{align*}
\int_{{\mathbb R}^n}
\frac{|u(x)-u(y)|}{|x-y|^{n+s}}\,dy
&\le
\sup_{z\in\overline{B(x,1)}}|\nabla u(z)|
\int_{|x-y|<1}|x-y|^{1-n-s}\,dy\\
&\quad+
2\|u\|_{L^\infty({\mathbb R}^n)}
\int_{|x-y|\ge1}|x-y|^{-n-s}\,dy\\
&=n|B({\bf 0},1)|
\left[
\frac{1}{1-s}
\sup_{z\in\overline{B(x,1)}}|\nabla u(z)|
+\frac{2}{s}\|u\|_{L^\infty({\mathbb R}^n)}
\right]
<\infty.
\end{align*}
Thus, the integral in \eqref{eq:lpls} is absolutely convergent
and defines
a continuous function.
Since $I_\alpha f$ is also continuous,
the distributional identity \eqref{eq:IIFF} holds pointwise.

Moreover, by the definition of $\mathcal D^s$, we conclude that,
for any $x\in{\mathbb R}^n$,
\begin{align}\label{eq:fraclap-dominated}
|(-\Delta)^{\frac{s}{2}}u(x)|
\le c_{n,s}\mathcal D^su(x).
\end{align}
From this and \eqref{eq:IIFF}, we infer that
\begin{align}\label{eq:Ialpha}
\|I_\alpha f\|_{L^{p,1}(\mu)}
=
\|(-\Delta)^{\frac{s}{2}}u\|_{L^{p,1}(\mu)}
\le c_{n,s}\|\mathcal D^su\|_{L^{p,1}(\mu)}.
\end{align}
Observe that, for any $a,b\in{\mathbb R}$,
\begin{align*}
|a-b|
=
\int_0^\infty
\left|
{\mathbf 1}_{\{a>t\}}-{\mathbf 1}_{\{b>t\}}
\right|\,dt+
\int_{-\infty}^0
\left|
{\mathbf 1}_{\{a<t\}}-{\mathbf 1}_{\{b<t\}}
\right|\,dt.
\end{align*}
From this and the definition of $A_t$, we deduce that,
for any $x,y\in{\mathbb R}^n$,
\begin{align*}
|u(x)-u(y)|
=
\int_{\mathbb R}
\left|
{\mathbf 1}_{A_t}(x)-{\mathbf 1}_{A_t}(y)
\right|\,dt,
\end{align*}
which, together with the definition of $\mathcal D^s$ and
Tonelli's theorem, implies that, for any $x\in{\mathbb R}^n$,
\begin{align*}
\mathcal D^su(x)
=
\int_{\mathbb R}\mathcal D^s{\mathbf 1}_{A_t}(x)\,dt.
\end{align*}
By this, Minkowski's inequality,
\eqref{eq:regular-level-properties},
Proposition~\ref{pro:11}, and
Lemma~\ref{lem:coarea}, we obtain
\begin{align*}
\|\mathcal D^su\|_{L^{p,1}(\mu)}
&\le
\int_{\mathbb R}
\|\mathcal D^s{\mathbf 1}_{A_t}\|_{L^{p,1}(\mu)}\,dt=
\int_{\mathbb R}
\|\mathcal D^s{\mathbf 1}_{A_t^1}\|_{L^{p,1}(\mu)}\,dt\\
&\lesssim
\int_{\mathbb R}\int_{\partial^*A_t}
w\,d{\mathcal H}^{n-1}\,dt=
\int_{\mathbb R}\int_{\partial^*\{u>t\}}
w\,d{\mathcal H}^{n-1}\,dt\\
&=\int_{{\mathbb R}^n}|\nabla u(x)|w(x)\,dx.
\end{align*}
Combining this, \eqref{eq:Ialpha}, and
$|\nabla u|=|\nabla I_1f|=|Rf|$, we obtain
\begin{align*}
\|I_\alpha f\|_{L^{p,1}(\mu)}
\lesssim
\|\mathcal D^su\|_{L^{p,1}(\mu)}
\lesssim
\int_{{\mathbb R}^n}|Rf(x)|(M\mu(x))^{\frac{1}{p}}\,dx.
\end{align*}
This shows \eqref{eq:main} in this case.

\emph{Case 2: $\alpha=1$.}
In this case, since $u$ is continuous, $E_t$ is open and hence
$E_t\subset E_t^1$ for every $t>0$.
Applying this, \eqref{eq:Lorentz-distribution},
Proposition~\ref{pro:11} with $s=0$,
Lemma~\ref{lem:coarea},
and the fact that $|\nabla |u|(x)|= |\nabla u(x)|$
for almost every $x\in{\mathbb R}^n$, we obtain
\begin{align}\label{eq:ca2}
\|I_1f\|_{L^{p,1}(\mu)}
&=p\int_0^\infty\mu(E_t)^{\frac{1}{p}}\,dt
\le p\int_0^\infty\mu(E_t^1)^{\frac{1}{p}}\,dt
\notag\\
&\lesssim
\int_0^\infty\int_{\partial^*E_t}
w\,d{\mathcal H}^{n-1}\,dt
=
\int_{{\mathbb R}^n}|\nabla|u|(x)|w(x)\,dx\notag \\
&=\int_{{\mathbb R}^n}|\nabla u(x)|w(x)\,dx
=
\int_{{\mathbb R}^n}|Rf(x)|w(x)\,dx.
\end{align}
This proves \eqref{eq:main} when $\alpha=1$.

\emph{Case 3: $\alpha\in(1,n)$.}
In this case, let $\beta:=\alpha-1\in(0,n-1)$.
From the semigroup property of Riesz potentials
(see \cite[Section~1]{SSVS2017}) and from $R=\nabla I_1$,
it follows that
\begin{align}\label{eq:Riesz-semigroup-supercritical}
I_\alpha f=I_\beta I_1f=I_\beta u
\quad\text{and}\quad
\nabla u=\nabla I_1f=Rf.
\end{align}
By \eqref{eq:uuu}, we find that $I_\beta|u|(x)$ is finite
for any $x\in{\mathbb R}^n$ and hence the semigroup identity
\eqref{eq:Riesz-semigroup-supercritical} holds pointwise.
Since $I_\alpha f=I_\beta u$ and the kernel of $I_\beta$
is nonnegative, from the layer-cake representation
$|u(y)|=\int_0^\infty{\mathbf 1}_{E_t}(y)\,dt$ and Tonelli's
theorem, we infer that, for any $x\in{\mathbb R}^n$,
\begin{align*}
|I_\alpha f(x)|
&=|I_\beta u(x)|
\le I_\beta|u|(x)\\
&=\frac{1}{\gamma(\beta)}\int_{{\mathbb R}^n}
\frac{1}{|x-y|^{n-\beta}}
\left[\int_0^\infty{\mathbf 1}_{E_t}(y)\,dt\right]\,dy\\
&=\int_0^\infty I_\beta{\mathbf 1}_{E_t}(x)\,dt.
\end{align*}
Applying this, Minkowski's inequality, and
Proposition~\ref{pro:22}, we obtain
\begin{align}\label{eq:If2}
\|I_\alpha f\|_{L^{p,1}(\mu)}
\le
\int_0^\infty
\|I_\beta{\mathbf 1}_{E_t}\|_{L^{p,1}(\mu)}\,dt
\lesssim
\int_0^\infty\int_{\partial^*E_t}
w\,d{\mathcal H}^{n-1}\,dt.
\end{align}
Moreover, from Lemma~\ref{lem:coarea} and
\eqref{eq:Riesz-semigroup-supercritical}, we deduce that
\begin{align*}
\int_0^\infty\int_{\partial^*E_t}
w\,d{\mathcal H}^{n-1}\,dt
&=
\int_{{\mathbb R}^n}|\nabla|u|(x)|w(x)\,dx
=
\int_{{\mathbb R}^n}|\nabla u(x)|w(x)\,dx\\
&=
\int_{{\mathbb R}^n}|Rf(x)|w(x)\,dx.
\end{align*}
Combining this and \eqref{eq:If2}, we obtain
\eqref{eq:main} in this case.
This then completes the proof of Theorem \ref{thm:main}.
\end{proof}

Next, we show Corollary~\ref{cor:curl-free}.

\begin{proof}[Proof of Corollary~\ref{cor:curl-free}]
Let $F\in L^1({\mathbb R}^n;{\mathbb R}^n)$ satisfy
$\operatorname{curl}F=0$ in the sense of distributions.
We may assume that the right-hand side of
\eqref{eq:curl-free-measure} is finite.
The case $\mu=0$ is immediate.
We next assume that $\mu$ is nonzero
and compactly supported (that is, there exists a compact set $K$
such that $\mu({\mathbb R}^n \setminus K)=0$), and write $w:=(M\mu)^{\frac{1}{p}}$.
From the Coifman--Rochberg theorem
\cite[Proposition~2]{CoifmanRochberg1980} and $0<\frac{1}{p}<1$, it follows that
$w\in A_1$.

We now claim that, for any $u\in C_{\rm c}^\infty({\mathbb R}^n)$,
\begin{align}\label{eq:curl-free-smooth}
\|I_\alpha\nabla u\|_{L^{p,1}(\mu;{\mathbb R}^n)}
\lesssim \int_{{\mathbb R}^n}|\nabla u(x)|w(x)\,dx,
\end{align}
where the implicit positive constant depends only on $n$ and $\alpha$.
When $\alpha \in (0,1)$, using integration by parts, we obtan
\begin{align*}
I_\alpha\nabla u(x)
=\frac{n-\alpha}{\gamma(\alpha)}
\int_{{\mathbb R}^n}
\frac{(u(x)-u(y))(x-y)}{|x-y|^{n+2-\alpha}}\,dy.
\end{align*}
Since $u\in C_{\rm c}^\infty({\mathbb R}^n)$,
we infer that this integral is absolutely convergent.
Consequently, for any $x\in {\mathbb R}^n$,
\begin{align}\label{eq:DDU}
|I_\alpha\nabla u(x)|\lesssim \mathcal D^{1-\alpha}u(x).
\end{align}
Repeating the proof of Theorem \ref{thm:main} with \eqref{eq:fraclap-dominated}
replaced by \eqref{eq:DDU}, we obtain \eqref{eq:curl-free-smooth}.

When $\alpha\in (1,n)$, integration by parts gives, for any $x\in{\mathbb R}^n$,
\begin{align*}
|I_\alpha\nabla u(x)|
\lesssim\int_{{\mathbb R}^n}\frac{|u(y)|}{|x-y|^{n+1-\alpha}}\,dy
\lesssim I_{\alpha-1}|u|(x).
\end{align*}
Applying Proposition~\ref{pro:22} to the superlevel sets of $|u|$
and then the weighted coarea formula, as in Case 3 of the proof of
Theorem~\ref{thm:main}, proves \eqref{eq:curl-free-smooth}.

It remains to consider $\alpha=1$.
For any set $E$ of finite perimeter and finite Lebesgue measure
and for any $x\in{\mathbb R}^n$, define
\begin{align*}
T_E(x):=\sum_{\nu\in\{0,\frac{1}{3},\frac{2}{3}\}^n}
\sum_{Q\in{\mathscr D}^\nu}\min\left\{\frac{|E\cap Q|}{|Q|},
\frac{|Q\setminus E|}{|Q|}\right\}{\mathbf 1}_Q(x).
\end{align*}
Using Lemmas~\ref{lem:HH}, \ref{lem:supercritical-counting},
and \ref{lem:local-charge} with $s=0$, and summing over $j$ and $k$, we obtain
\begin{align}\label{eq:zero-order-density-sum}
\|T_E\|_{L^{p,1}(\mu)}
&\le\sum_{\nu\in\{0,\frac{1}{3},\frac{2}{3}\}^n}\sum_{j\in{\mathbb N}}
\sum_{k\in{\mathbb Z}}
\left\|\sum_{Q\in\mathcal G_{j,k}^\nu}2^{-j}{\mathbf 1}_Q
\right\|_{L^{p,1}(\mu)}\notag\\
&\lesssim\sum_{j\in{\mathbb N}}2^{-\frac{j}{n}}
\int_{\partial^*E}w\,d{\mathcal H}^{n-1}
\lesssim\int_{\partial^*E}w\,d{\mathcal H}^{n-1}.
\end{align}
For any $z\ne0$, let $K(z):=\nabla_z[\gamma(1)^{-1}|z|^{1-n}]$.
For any $x\in{\mathbb R}^n$ and $k\in{\mathbb Z}$, let
$$A_k(x):=\left\{y:2^k\le|x-y|<2^{k+1}\right\}.$$
Then $|K(z)|\lesssim|z|^{-n}$ and,
for any $k\in{\mathbb Z}$ and $x\in{\mathbb R}^n$,
\begin{align}\label{eq:KKK}
\int_{A_k(x)}K(x-y)\,dy=0.
\end{align}
By Lemma~\ref{lemCub}(ii), we find that there exist
$\nu_k\in\{0,\frac{1}{3},\frac{2}{3}\}^n$
and
a cube
$Q_k\in{\mathscr D}^{\nu_k}$ such that $B(x,2^{k+1})\subset Q_k$ and
$\ell(Q_k)\sim2^k$.
From this and \eqref{eq:KKK}, we deduce that, for any $x\in {\mathbb R}^n$,
\begin{align}\label{eq:QQK}
\left|\int_{A_k(x)}K(x-y){\mathbf 1}_E(y)\,dy\right|
&\lesssim 2^{-kn}
\min\{|E\cap A_k(x)|,|A_k(x)\setminus E|\}\notag\\
&\lesssim \min\left\{\frac{|E\cap Q_k|}{|Q_k|},
\frac{|Q_k\setminus E|}{|Q_k|}\right\}.
\end{align}
For any $t\in(0,\infty)$, let
\begin{align*}
E_t^+:=\{y\in{\mathbb R}^n:u(y)>t\}
\quad \text{and}\quad
E_t^-:=\{y\in{\mathbb R}^n:u(y)<-t\}.
\end{align*}
Using the signed layer-cake representation
\begin{align*}
u(y)=\int_0^\infty
\left[{\mathbf 1}_{E_t^+}(y)-{\mathbf 1}_{E_t^-}(y)\right]\,dt
\end{align*}
and the fact that, for any $x\in{\mathbb R}^n$ and $k\in{\mathbb Z}$,
\begin{align*}
&\int_{A_k(x)}|K(x-y)|
\int_0^\infty
\left[{\mathbf 1}_{E_t^+}(y)+{\mathbf 1}_{E_t^-}(y)\right]\,dt\,dy\\
&\qquad=
\int_{A_k(x)}|K(x-y)||u(y)|\,dy<\infty,
\end{align*}
we can apply Fubini's theorem on each annulus.
By the triangle inequality and Tonelli's theorem,
we conclude that,
for any $x\in{\mathbb R}^n$,
\begin{align*}
|I_1\nabla u(x)|&=|Ru(x)|
\le\sum_{k\in{\mathbb Z}}
\left|\int_{A_k(x)}K(x-y)u(y)\,dy\right|\\
&\le\sum_{k\in{\mathbb Z}}\int_0^\infty
\left[
\left|\int_{A_k(x)}
K(x-y){\mathbf 1}_{E_t^+}(y)\,dy\right|
+
\left|\int_{A_k(x)}
K(x-y){\mathbf 1}_{E_t^-}(y)\,dy\right|
\right]\,dt\\
&=\int_0^\infty\sum_{k\in{\mathbb Z}}
\left[
\left|\int_{A_k(x)}
K(x-y){\mathbf 1}_{E_t^+}(y)\,dy\right|
+
\left|\int_{A_k(x)}
K(x-y){\mathbf 1}_{E_t^-}(y)\,dy\right|
\right]\,dt\\
&\lesssim\int_0^\infty
\left[T_{E_t^+}(x)+T_{E_t^-}(x)\right]\,dt.
\end{align*}
Here, the last inequality follows from \eqref{eq:QQK}
and the fact that $\ell(Q_k)\sim2^k$, which ensures
that each selected cube occurs only a bounded number
of times with a bound depending only on $n$.
Applying this,
Minkowski's inequality, \eqref{eq:zero-order-density-sum}, Lemma \ref{lem:coarea} to
$u^+:=\max\{u,0\}$ and $u^-:=\max\{-u,0\}$, and the fact that
$|\nabla u^+|+|\nabla u^-|=|\nabla u|$ almost everywhere,
we conclude that
\begin{align*}
\|I_1\nabla u\|_{L^{p,1}(\mu;{\mathbb R}^n)}
&\lesssim\int_0^\infty
\left[\int_{\partial^*E_t^+}w\,d{\mathcal H}^{n-1}
+\int_{\partial^*E_t^-}w\,d{\mathcal H}^{n-1}\right]\,dt\\
&=\int_{{\mathbb R}^n}
\left[|\nabla u^+(x)|+|\nabla u^-(x)|\right]w(x)\,dx
=\int_{{\mathbb R}^n}|\nabla u(x)|w(x)\,dx.
\end{align*}
This completes the proof of \eqref{eq:curl-free-smooth} for all
$\alpha\in(0,n)$.

We now pass from smooth gradients to the given field $F$.
By an approximation result of Bonami and
Poornima \cite[Lemma~1]{BonamiPoornima1987}, as used in the proof of
\cite[Theorem~1.1]{Spector2020}, we conclude that there exist functions
$\{\varphi_k\}_{k\in {\mathbb N}}\subset C_{\rm c}^\infty({\mathbb R}^n)$ such that
$
\nabla\varphi_k\to  F$
in $L^1({\mathbb R}^n;{\mathbb R}^n)$.
Applying the classical Sobolev inequality to
$\varphi_k-\varphi_\ell$ (see, for instance, \cite[Theorem 4.8]{EvansGariepy2015}),
we obtain, as $k,\ell\to\infty$,
\begin{align*}
\|\varphi_k-\varphi_\ell\|_{L^{\frac{n}{n-1}}({\mathbb R}^n)}
\le C_n
\|\nabla\varphi_k-\nabla\varphi_\ell\|_{L^1({\mathbb R}^n;{\mathbb R}^n)}
\to 0,
\end{align*}
where $C_n$ is a positive constant depending only on $n$.
Thus, $\{\varphi_k\}_{k\in {\mathbb N}}$ is a
Cauchy sequence in $L^{\frac{n}{n-1}}({\mathbb R}^n)$.
From completeness, we infer that there exists
 $\Phi \in L^{\frac{n}{n-1}}({\mathbb R}^n)$
such that $\varphi_k\to \Phi$ in $L^{\frac{n}{n-1}}({\mathbb R}^n)$, and hence in
$L_{\mathrm{loc}}^1({\mathbb R}^n)$.
For any $i\in\{1,\ldots,n\}$ and $\psi\in C_{\rm c}^\infty({\mathbb R}^n)$,
using integration by parts, we have
\begin{align*}
\int_{{\mathbb R}^n}\Phi(x)\,\partial_i\psi(x)\,dx
&=\lim_{k\to\infty}
\int_{{\mathbb R}^n}\varphi_k(x)\,\partial_i\psi(x)\,dx\\
&=-\lim_{k\to\infty}
\int_{{\mathbb R}^n}\partial_i\varphi_k(x)\,\psi(x)\,dx
=-\int_{{\mathbb R}^n}F_i(x)\,\psi(x)\,dx.
\end{align*}
Consequently, $\nabla \Phi=F$
in the sense of distributions, and
$\Phi\in W_{\mathrm{loc}}^{1,1}({\mathbb R}^n)$.
Moreover, by the classical Sobolev inequality again, we have
\begin{align*}
\|\Phi\|_{L^{\frac{n}{n-1}}({\mathbb R}^n)}
= \lim_{k\to \infty} \|\varphi_k\|_{L^{\frac{n}{n-1}}({\mathbb R}^n)}
\lesssim \lim_{k\to \infty} \|\nabla\varphi_k\|_{L^1({\mathbb R}^n;{\mathbb R}^n)}
= \|F\|_{L^1({\mathbb R}^n;{\mathbb R}^n)}.
\end{align*}

We next show that there exists a sequence
$\{\Phi_k\}_{k\in {\mathbb N}}\subset  C_{\rm c}^\infty({\mathbb R}^n)$ such that
\begin{align}\label{eq:curl-free-weighted-approximation}
\int_{{\mathbb R}^n}|\nabla \Phi_k(x)-F(x)|w(x)\,dx\to 0
\quad\text{as }k\to\infty.
\end{align}
Choose $R_0>0$ such that $\operatorname{supp}\mu\subset B({\bf 0},R_0)$.
Fix $x\in{\mathbb R}^n$ with $|x|\ge2R_0$.
If $0<r\le |x|-R_0$, then $B(x,r)$ does not intersect
$B({\bf 0},R_0)$, and hence $\mu(B(x,r))=0$.
If $r>|x|-R_0$, then $r>{\frac{|x|}{2}}$. Thus,
\begin{align*}
\frac{\mu(B(x,r))}{|B(x,r)|}
\le \frac{\mu({\mathbb R}^n)}{\omega_n r^n}
\le \frac{2^n}{\omega_n}
\frac{\mu({\mathbb R}^n)}{|x|^n},
\end{align*}
where $\omega_n:=|B({\bf 0},1)|$.
From this, the definition of $M\mu$, and ${\frac{n}{p}}=n-\alpha$,
we deduce that, for any $x\in{\mathbb R}^n$ with $|x|\ge2R_0$,
\begin{align}\label{eq:wMu}
w(x)=[M\mu(x)]^{{\frac{1}{p}}}
\lesssim \mu({\mathbb R}^n)^{{\frac{1}{p}}}|x|^{-{\frac{n}{p}}}
=\mu({\mathbb R}^n)^{{\frac{1}{p}}}|x|^{\alpha-n}.
\end{align}
Let $\eta\in C_{\rm c}^\infty(B({\bf 0},2))$ satisfy $0\le\eta\le1$ and
$\eta=1$ on $B({\bf 0},1)$.
For any $R\in (0,\infty)$, define
$\eta_R(\cdot):=\eta({\frac{\cdot}{R}})$ and $A_R:=B({\bf 0},2R)\setminus B({\bf 0},R)$.
Then
$\eta_R=1$ on $B({\bf 0},R)$,
$\operatorname{supp}\nabla\eta_R\subset A_R$, and
$|\nabla\eta_R|\le {\frac{C}{R}}$.
By $\Phi\in W_{\mathrm{loc}}^{1,1}({\mathbb R}^n)$ and $\nabla \Phi=F$,
we find that, for almost every $x\in {\mathbb R}^n$,
\begin{align*}
\nabla(\eta_R\Phi)(x)-F(x)
&=\eta_R(x)\nabla \Phi(x)+\Phi(x)\nabla\eta_R(x)-F(x)\\
&=(\eta_R-1)(x)F(x)+\Phi(x)\nabla\eta_R (x).
\end{align*}
From this, \eqref{eq:wMu}, and H\"older's inequality,
it follows that, for any $R\ge2R_0$,
\begin{align*}
\int_{{\mathbb R}^n}|\nabla(\eta_R\Phi)-F|w\,dx
&\le\int_{{\mathbb R}^n\setminus B({\bf 0},R)}|F|w\,dx
+\frac{C}{R}\int_{A_R}|\Phi|w\,dx\\
&\le\int_{{\mathbb R}^n\setminus B({\bf 0},R)}|F|w\,dx
+C\mu({\mathbb R}^n)^{\frac{1}{p}}R^{\alpha-n}
\|\Phi\|_{L^{\frac{n}{n-1}}(A_R)}
\to 0
\end{align*}
as $R\to \infty$.
Using this convergence and the fact that $w\in A_1$,
we choose sequences $\{R_k\}_{k\in\mathbb N}$ and
$\{\epsilon_k\}_{k\in\mathbb N}$ of positive numbers, with
$R_k\to\infty$ and $\epsilon_k\to0$, and define, for any $k\in\mathbb N$,
$\Phi_k:=(\eta_{R_k}\Phi)*\rho_{\epsilon_k},$
where $\rho_\epsilon(\cdot):=\epsilon^{-n}\rho(\cdot/\epsilon)$
for a nonnegative function $\rho\in C_{\rm c}^\infty(B(0,1))$
satisfying $\int_{{\mathbb R}^n}\rho(x)\,dx=1$.
For each $k$, we first choose $R_k>k$ so that the
cutoff error is less than $2^{-k}$, and then choose
$\epsilon_k\in(0,1/k)$ so that the mollification
error is less than $2^{-k}$ in the corresponding
norm $\|\cdot\|_{L^1(w\,dx; {\mathbb R}^n)}$. The resulting sequence
$\{\Phi_k\}_{k\in\mathbb N}\subset C_{\rm c}^\infty({\mathbb R}^n)$
satisfies \eqref{eq:curl-free-weighted-approximation}.

For any $r\in(0,\infty)$ and almost every $y\in{\mathbb R}^n$,
Tonelli's theorem yields
\begin{align*}
\int_{B(y,r)}\frac{d\mu(x)}{|x-y|^{n-\alpha}}
&=r^{\alpha-n}\mu(B(y,r))
+(n-\alpha)\int_0^r
\frac{\mu(B(y,t))}{t^{n-\alpha+1}}\,dt\\
&\lesssim M\mu(y)
\left[r^\alpha+\int_0^r t^{\alpha-1}\,dt\right]
\lesssim r^\alpha M\mu(y),
\end{align*}
which implies that
\begin{align*}
\int_{{\mathbb R}^n}\frac{d\mu(x)}{|x-y|^{n-\alpha}}
&=\int_{B(y,r)}\frac{d\mu(x)}{|x-y|^{n-\alpha}}
+\int_{{\mathbb R}^n\setminus B(y,r)}
\frac{d\mu(x)}{|x-y|^{n-\alpha}}\\
&\lesssim r^\alpha M\mu(y)
+r^{\alpha-n}\mu({\mathbb R}^n).
\end{align*}
Choosing
$r:=[\mu({\mathbb R}^n)/M\mu(y)]^{\frac{1}{n}}$,
we find that, for almost every $y\in{\mathbb R}^n$,
\begin{align*}
\int_{{\mathbb R}^n}\frac{d\mu(x)}{|x-y|^{n-\alpha}}
\lesssim
\mu({\mathbb R}^n)^{\frac{\alpha}{n}}
[M\mu(y)]^{1-\frac{\alpha}{n}}
=\mu({\mathbb R}^n)^{\frac{\alpha}{n}}w(y).
\end{align*}
By this and Tonelli's theorem, we conclude that,
for any $G\in L^1(w\,dx;{\mathbb R}^n)$,
\begin{align}\label{eq:curl-free-L1-mu}
\int_{{\mathbb R}^n}I_\alpha(|G|)(x)\,d\mu(x)
\lesssim \mu({\mathbb R}^n)^{{\frac{\alpha}{n}}}
\int_{{\mathbb R}^n}|G(x)|w(x)\,dx.
\end{align}
In particular, the integral defining $I_\alpha F(x)$ is absolutely convergent
$\mu$-almost everywhere. Applying
\eqref{eq:curl-free-L1-mu} with $G:=\nabla \Phi_k-F$ and
applying
\eqref{eq:curl-free-weighted-approximation}, we obtain
\begin{align*}
\|I_\alpha\nabla \Phi_k-I_\alpha F\|_{L^1(\mu;{\mathbb R}^n)}
\lesssim \mu({\mathbb R}^n)^{{\frac{\alpha}{n}}}
\|\nabla \Phi_k-F\|_{L^1(w\,dx;{\mathbb R}^n)}
\to 0.
\end{align*}
By the subsequence principle for $L^1$ convergence,
there exists a subsequence $\{\Phi_{k_i}\}_{i\in {\mathbb N}}$
such that, for $\mu$-almost every $x\in{\mathbb R}^n$,
$I_\alpha\nabla \Phi_{k_i}(x)\to I_\alpha F(x)$
as $i\to \infty$.
Using this convergence, \eqref{eq:Lorentz-distribution},
Fatou's lemma, \eqref{eq:curl-free-weighted-approximation},
and \eqref{eq:curl-free-smooth}, we find that
\begin{align*}
\|I_\alpha F\|_{L^{p,1}(\mu;{\mathbb R}^n)}
&=p\int_0^\infty
\mu\left(\{|I_\alpha F|>t\}\right)^{{\frac{1}{p}}}\,dt\\
&\le p\int_0^\infty
\liminf_{i\to\infty}
\mu\left(\{|I_\alpha\nabla \Phi_{k_i}|>t\}\right)^{{\frac{1}{p}}}\,dt\\
&\le\liminf_{i\to\infty}
p\int_0^\infty
\mu\left(\{|I_\alpha\nabla \Phi_{k_i}|>t\}\right)^{{\frac{1}{p}}}\,dt\\
&=\liminf_{i\to\infty}
\|I_\alpha\nabla \Phi_{k_i}\|_{L^{p,1}(\mu;{\mathbb R}^n)}\\
&\lesssim
\lim_{i\to\infty}\int_{{\mathbb R}^n}|\nabla \Phi_{k_i}|w\,dx
=\int_{{\mathbb R}^n}|F|w\,dx.
\end{align*}
This shows \eqref{eq:curl-free-measure} for a compactly supported measure $\mu$.

For a general locally finite measure $\mu$,
let $\mu_m(\cdot):=\mu(B({\bf 0},m)\cap \cdot)$
for any $m\in {\mathbb N}$.
From this, $M\mu_m\le M\mu$, and the monotone convergence theorem,
we deduce \eqref{eq:curl-free-measure}.
This completes the proof of Corollary \ref{cor:curl-free}.
\end{proof}

Next, we give the proof of Corollary~\ref{cor:fractional-gradient}.

\begin{proof}[Proof of Corollary \ref{cor:fractional-gradient}]
Without loss of generality, we may assume that
the right-hand side of \eqref{eq:fractional-gradient-Lorentz} is finite.
When $\alpha=1$, the level-set and coarea argument in
\eqref{eq:ca2}, applied directly to $u$, gives
\eqref{eq:fractional-gradient-Lorentz}.
When $\alpha\in (0,1)$, let $s:=1-\alpha\in(0,1)$,
$w:=(M\mu)^{\frac{1}{p}}$, $u\in C_{\rm c}^\infty({\mathbb R}^n)$,
and $v:=I_su=I_{1-\alpha}u$.
Since $u\in C_{\rm c}^\infty({\mathbb R}^n)$,
from the definition of the Riesz potential, we infer that
 $v\in C^\infty({\mathbb R}^n)\cap L^\infty({\mathbb R}^n)$
and, for any $x\in{\mathbb R}^n$,
\begin{align*}
|v(x)|\le C_u(1+|x|)^{s-n}
\quad \text{and}\quad
|\nabla v(x)|\le C_u(1+|x|)^{s-n-1},
\end{align*}
where $C_u$ is a positive constant depending only on $n$, $\alpha$,
and $u$. In particular, $v$ tends to zero at infinity and
$\nabla v=\nabla^\alpha u$.
Moreover, the identity $(-\Delta)^{\frac{s}{2}}I_su=u$ holds in
the sense of tempered distributions. As in the proof of
\eqref{eq:fraclap-dominated}, the boundedness and smoothness of $v$
and $s\in(0,1)$ ensure that the singular integral below is absolutely
convergent and continuous on ${\mathbb R}^n$. Thus, the identity also holds
pointwise, and
\begin{align}\label{eq:nnns}
|u(x)|
&=\left|c_{n,s}\int_{{\mathbb R}^n}
\frac{v(x)-v(y)}{|x-y|^{n+s}}\,dy\right|
\le c_{n,s}\mathcal D^sv(x).
\end{align}
Repeating the argument in Case 1 of the proof of
Theorem~\ref{thm:main}, with $u$ replaced by $v$ and
\eqref{eq:fraclap-dominated} replaced by \eqref{eq:nnns},
we obtain \eqref{eq:fractional-gradient-Lorentz}.
This completes the proof of
Corollary~\ref{cor:fractional-gradient}.
\end{proof}

Applying Corollary~\ref{cor:fractional-gradient},
we now prove Corollary~\ref{cor:fractional-hardy-pole}.

\begin{proof}[Proof of Corollary \ref{cor:fractional-hardy-pole}]
Let $a\in{\mathbb R}^n$ satisfy $M\mu(a)<\infty$, and let
$\omega_n:=|B({\bf 0},1)|$, $p:={\frac{n}{n-\alpha}}$, and
$p':={\frac{n}{\alpha}}$.
By the definition of $M\mu$, for any $r\in(0,\infty)$, we have
\begin{align*}
\mu(\{a\})\le\mu(B(a,r))
\le\omega_nr^nM\mu(a).
\end{align*}
Letting $r\downarrow0$ gives $\mu(\{a\})=0$.
Define $h_a(x):=|x-a|^{-\alpha}$ for any $x\ne a$, and $h_a(a):=0$.
For any $t\in(0,\infty)$, it follows that
\begin{align*}
\mu(\{x:h_a(x)>t\})
=\mu\left(B\left(a,t^{-{\frac{1}{\alpha}}}\right)\right).
\end{align*}
From this and \eqref{eq:weak-Lorentz-distribution}, we infer that
\begin{align*}
\|h_a\|_{L^{p',\infty}(\mu)}
=\sup_{t\in(0,\infty)}
t\mu\left(B\left(a,t^{-{\frac{1}{\alpha}}}\right)\right)^{\frac{\alpha}{n}}
=\sup_{r\in(0,\infty)}
r^{-\alpha}\mu(B(a,r))^{\frac{\alpha}{n}}
=\left[\omega_nM\mu(a)\right]^{\frac{\alpha}{n}}.
\end{align*}
Using this, ${\frac{1}{p}}+{\frac{1}{p'}}=1$, the Lorentz--H\"older
inequality, and Corollary~\ref{cor:fractional-gradient}, we find that
\begin{align*}
\int_{{\mathbb R}^n}\frac{|u(x)|}{|x-a|^\alpha}\,d\mu(x)
&=\int_{{\mathbb R}^n}|u(x)|h_a(x)\,d\mu(x)
\le\|u\|_{L^{p,1}(\mu)}\|h_a\|_{L^{p',\infty}(\mu)}\\
&\le C_{n,\alpha}[M\mu(a)]^{\frac{\alpha}{n}}
\int_{{\mathbb R}^n}|\nabla^\alpha u(x)|
[M\mu(x)]^{1-{\frac{\alpha}{n}}}\,dx.
\end{align*}
This gives \eqref{eq:fractional-hardy-pole} and
completes the proof of Corollary~\ref{cor:fractional-hardy-pole}.
\end{proof}

To prove Corollary~\ref{cor:A1},
we need the following proposition.

\begin{proposition}
\label{prop:A1-sharpness}
Let $\alpha \in (0,n)$ and $p={\frac{n}{n-\alpha}}$.
Suppose that there exist some $\theta\in{\mathbb R}$ and a positive constant $C$, independent
of $v$ and $f$, such that, for any $v\in A_1$ and $f\in C_{\rm c}^\infty({\mathbb R}^n)$,
\begin{align}\label{eq:Shap}
\|I_\alpha f\|_{L^{p,1}(v)}
\le C[v]_{A_1}^{\theta}
\int_{{\mathbb R}^n}|Rf(x)|v(x)^{\frac{1}{p}}\,dx.
\end{align}
Then $\theta\ge \frac{1}{p}$.
\end{proposition}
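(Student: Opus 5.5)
The plan is to test \eqref{eq:Shap} on a single fixed function $f$ against the family of power weights $v_\gamma(x):=|x|^{-\gamma}$ with $\gamma\uparrow n$. For these weights $[v_\gamma]_{A_1}$ blows up like $(n-\gamma)^{-1}$, the left-hand side blows up like $(n-\gamma)^{-1/p}$, and the right-hand side stays bounded. Comparing the rates will force $\theta\ge\frac1p$.

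\emph{Step 1: the $A_1$ constant.} Show that, for $\gamma\in[\frac n2,n)$, one has $v_\gamma\in A_1$ with $[v_\gamma]_{A_1}\le C_n(n-\gamma)^{-1}$. For a cube $Q$ of side $\ell$ centered at $x_0$, split into two cases.
\begin{itemize}
\item If $|x_0|\le 2\sqrt n\,\ell$, then $Q\subset B(\mathbf 0,3\sqrt n\,\ell)$. The average of $|x|^{-\gamma}$ over this ball is $\frac{n}{n-\gamma}(3\sqrt n\,\ell)^{-\gamma}$. Since $\operatorname{ess\,inf}_Q v_\gamma\gtrsim \ell^{-\gamma}$, the ratio in Definition~\ref{def:A1} is $\lesssim (n-\gamma)^{-1}$.
\item Otherwise $|x|\sim|x_0|$ on $Q$, so $v_\gamma$ is comparable to a constant on $Q$ and the ratio is bounded by a constant depending only on $n$.
\end{itemize}
This bookkeeping is routine.

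\emph{Step 2: lower bound for the left-hand side.} Fix a nonnegative, not identically zero $f\in C_{\rm c}^\infty(\mathbb R^n)$. Then $I_\alpha f$ is continuous and strictly positive, so $I_\alpha f\ge c_0>0$ on $B(\mathbf 0,1)$. From \eqref{eq:Lorentz-distribution},
\begin{align*}
\|I_\alpha f\|_{L^{p,1}(v_\gamma)}\ge p\int_0^{c_0}v_\gamma(\{I_\alpha f>t\})^{\frac1p}\,dt\ge pc_0\,v_\gamma(B(\mathbf 0,1))^{\frac1p}.
\end{align*}
Since $v_\gamma(B(\mathbf 0,1))=\frac{n\omega_n}{n-\gamma}$, this gives
\begin{align*}
\|I_\alpha f\|_{L^{p,1}(v_\gamma)}\ge c\,(n-\gamma)^{-\frac1p}.
\end{align*}

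\emph{Step 3: uniform bound for the right-hand side.} The function $Rf$ is smooth and bounded, and $|Rf(x)|\lesssim_f |x|^{-n}$ for $|x|\ge1$ (Riesz-kernel decay). The weight $v_\gamma^{1/p}=|x|^{-\gamma/p}$ has exponent $\gamma/p<n/p=n-\alpha<n$, which is locally integrable uniformly in $\gamma<n$. At infinity $|x|^{-n-\gamma/p}$ is integrable uniformly for $\gamma\ge\frac n2$. Hence $\int_{\mathbb R^n}|Rf|\,v_\gamma^{1/p}\,dx\le C_f$, independently of $\gamma\in[\frac n2,n)$.

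\emph{Step 4: conclusion.} Plugging Steps 1--3 into \eqref{eq:Shap} gives $(n-\gamma)^{-1/p}\lesssim (n-\gamma)^{-\theta}$ if $\theta\ge0$ (and a bounded right-hand side if $\theta<0$). Letting $\gamma\to n$ yields $\theta\ge\frac1p$.

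The main obstacle is Step 1, namely the precise $(n-\gamma)^{-1}$ bound for $[v_\gamma]_{A_1}$ over all cubes; the case split above should handle it.
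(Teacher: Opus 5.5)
Your proposal is correct and matches the paper's proof essentially step for step. The paper also tests \eqref{eq:Shap} on the power weights $|x|^{-n+\delta}$ (your $\gamma=n-\delta$), uses a fixed nonnegative $f$ with $I_\alpha f\gtrsim 1$ on $B(\mathbf 0,1)$ to get a left-hand side $\gtrsim\delta^{-1/p}$, and bounds the right-hand side uniformly via $|Rf(x)|\lesssim(1+|x|)^{-n}$. The differences are cosmetic: the paper cites Grafakos for the two-sided bound $[v_\delta]_{A_1}\sim\delta^{-1}$ (covering every sign of $\theta$ at once) and takes $f$ supported in $\{2<|y|<3\}$ for an explicit lower bound, whereas you prove the upper bound on $[v_\gamma]_{A_1}$ by hand and handle $\theta<0$ through the trivial fact $[v]_{A_1}\ge1$.
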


\begin{proof}
For any $\delta\in (0,\frac{n}{2})$, define
$v_\delta(\cdot):=|\cdot|^{-n+\delta}$.
From \cite[Example 7.1.7]{GrafakosCFA}, we deduce that
$v_\delta\in A_1$, $[v_\delta]_{A_1}\sim \delta^{-1}$ with
positive
equivalence constants depending on $n$,
and
\begin{align*}
v_\delta(B({\bf 0},1)):= \int_{B({\bf 0},1)}v_\delta(x)\,dx
=\frac{n|B({\bf 0},1)|}{\delta}.
\end{align*}
Choose a fixed nonnegative function $f\in C_{\rm c}^\infty({\mathbb R}^n)$,
independent of $\delta$, such that
\begin{align*}
\operatorname{supp}f\subset\{y\in{\mathbb R}^n:2<|y|<3\}
\quad
\text{and}
\quad
\int_{{\mathbb R}^n}f(y)\,dy=1.
\end{align*}
For any $x\in B({\bf 0},1)$ and $y\in\operatorname{supp}f$, we have
$|x-y|<4$, and hence
\begin{align*}
I_\alpha f(x)
=
\frac{1}{\gamma(\alpha)}
\int_{{\mathbb R}^n}|x-y|^{\alpha-n}f(y)\,dy
\ge \frac{4^{\alpha-n}}{\gamma(\alpha)}
=:c_{n,\alpha}>0.
\end{align*}
By this and the definition of the Lorentz norm,
we conclude that
\begin{align}\label{eq:A1-sharpness-lower}
\|I_\alpha f\|_{L^{p,1}(v_\delta)}
&=
p\int_{0}^\infty v_{\delta}(\{I_\alpha f>t\})^{\frac{1}{p}}\,dt
\ge
p\int_{0}^{c_{n,\alpha}} v_{\delta}(B({\bf 0},1))^{\frac{1}{p}}\,dt\notag\\
&=
p c_{n,\alpha}
\left[\frac{n|B({\bf 0},1)|}{\delta}\right]^{\frac{1}{p}}
\gtrsim \delta^{-\frac{1}{p}}.
\end{align}
Moreover, since $f\in C_{\rm c}^\infty ({\mathbb R}^n)$,
it follows that there exists a positive constant $C_f$,
depending only on $f$, such that,
for any $x\in{\mathbb R}^n$,
\begin{align*}
|Rf(x)|\le C_f(1+|x|)^{-n}.
\end{align*}
Using this, polar coordinates, ${\frac{n}{p}}=n-\alpha$,
and $\delta\in (0,\frac{n}{2})$,
we obtain
\begin{align*}
\int_{{\mathbb R}^n}|Rf(x)|v_\delta(x)^{\frac{1}{p}}\,dx
&\lesssim
\int_0^1 r^{\alpha-1+\frac{\delta}{p}}\,dr
+\int_1^\infty r^{-1-\frac{n-\delta}{p}}\,dr
\\
&=
\frac{1}{\alpha+{\frac{\delta}{p}}}
+\frac{p}{n-\delta}
\le
\frac{1}{\alpha}+\frac{2p}{n}.
\end{align*}
Combining this, \eqref{eq:Shap}, and \eqref{eq:A1-sharpness-lower},
we conclude that, for any $\delta\in (0,\frac{n}{2})$,
$\delta^{-\frac{1}{p}}\le C'\delta^{-\theta},$
where $C'$ is independent of $\delta$.
Letting $\delta\downarrow0$, we have $\theta\ge \frac{1}{p}$.
This completes the proof of Proposition \ref{prop:A1-sharpness}.
\end{proof}

Finally, we give the proof of Corollary~\ref{cor:A1}.
\begin{proof}[Proof of Corollary \ref{cor:A1}]
By the definition of $[v]_{A_1}$,
we have $Mv \lesssim [v]_{A_1}v$ almost everywhere.
Applying Theorem~\ref{thm:main} with $\mu:=v\,dx$ gives
\eqref{eq:weight}.
Furthermore, applying Proposition \ref{prop:A1-sharpness},
we find that the exponent $\frac{1}{p}$ on $[v]_{A_1}$ is optimal.
This completes the proof of Corollary \ref{cor:A1}.
\end{proof}

\smallskip

\noindent\textbf{Acknowledgements}\quad
The authors acknowledge the use of AI tools during the
exploratory stage of this project.
All mathematical arguments and proofs in the final manuscript were checked
and written by the authors.

%
%
%

\bigskip

\noindent Pingxu Hu, Dachun Yang and
Wen Yuan

\medskip

\noindent Laboratory of Mathematics and Complex Systems
(Ministry of Education of China),
School of Mathematical Sciences, Institute for Advanced Study,
Beijing Normal University,
Beijing 100875, The People's Republic of China

\smallskip

\noindent {\it E-mails}: \texttt{pingxuhu@mail.bnu.edu.cn} (P. Hu)

\noindent\phantom{\it E-mails }
\texttt{dcyang@bnu.edu.cn} (D. Yang)

\noindent\phantom {\it E-mails }
\texttt{wenyuan@bnu.edu.cn} (W. Yuan)


\begin{thebibliography}{99}

\bibitem{AFP2000}
L.~Ambrosio, N.~Fusco and D.~Pallara,
Functions of Bounded Variation and Free Discontinuity Problems,
Oxford Math. Monogr.,
The Clarendon Press, Oxford University Press, New York, 2000.
\vspace{-0.3cm}

\bibitem{BonamiPoornima1987}
A.~Bonami and S.~Poornima,
Nonmultipliers of the Sobolev spaces $W^{k,1}({\mathbb R}^n)$,
J. Funct. Anal. 71 (1987), 175--181.

\vspace{-0.3cm}

\bibitem{BMOPR2026}
S.~Bortz, K.~Moen, A.~Olivo, C.~P\'erez and E.~Rela,
Weighted Sobolev inequalities via the Meyers--Ziemer framework:
measures, isoperimetric inequalities, and endpoint estimates,
arXiv:2603.05382v1.

\vspace{-0.3cm}

\bibitem{BourgainBrezis2007}
J.~Bourgain and H.~Brezis,
New estimates for elliptic equations and Hodge type systems,
J. Eur. Math. Soc. (JEMS) 9 (2007), 277--315.

\vspace{-0.3cm}

\bibitem{BCS2026}
D.~Breit, A.~Cianchi and D.~Spector,
Sobolev inequalities for canceling operators,
J. Math. Pures Appl. (9) 207 (2026), Paper No. 103844, 34 pp.

\vspace{-0.3cm}

\bibitem{BCS2026jfaa}
D.~Breit, A.~Cianchi and D.~Spector,
Riesz potential estimates under co-canceling constraints,
J. Fourier Anal. Appl. 32 (2026), Paper No. 83, 36 pp.

\vspace{-0.3cm}

\bibitem{CoifmanRochberg1980}
R.~R.~Coifman and R.~Rochberg,
Another characterization of BMO,
Proc. Amer. Math. Soc. 79 (1980), 249--254.

\vspace{-0.3cm}

\bibitem{DPV2012}
E.~Di Nezza, G.~Palatucci and E.~Valdinoci,
Hitchhiker's guide to the fractional Sobolev spaces,
Bull. Sci. Math. 136 (2012), 521--573.

\vspace{-0.3cm}

\bibitem{EvansGariepy2015}
L.~C.~Evans and R.~F.~Gariepy,
Measure Theory and Fine Properties of Functions,
Revised edition,
CRC Press, Boca Raton, FL, 2015.

\vspace{-0.3cm}

\bibitem{Federer1969}
H.~Federer,
Geometric Measure Theory,
Grundlehren math. Wiss. 153,
Springer-Verlag New York, Inc., New York, 1969.

\vspace{-0.3cm}

\bibitem{FedererFleming1960}
H.~Federer and W.~H. Fleming,
Normal and integral currents,
Ann. of Math. (2) 72 (1960), 458--520.

\vspace{-0.3cm}

\bibitem{GHS2026arxiv}
J.~Goodman, F.~Hern\'andez and D.~Spector,
Potential estimates and Hodge systems with $L^1$ data on compact manifolds,
arXiv:2606.12891.

\vspace{-0.3cm}

\bibitem{GrafakosCFA}
L.~Grafakos,
Classical Fourier Analysis,
Third edition,
Grad. Texts in Math. 249,
Springer, New York, 2014.

\vspace{-0.3cm}

\bibitem{GrafakosMFA}
L.~Grafakos,
Modern Fourier Analysis,
Third edition,
Grad. Texts in Math. 250,
Springer, New York, 2014.

\vspace{-0.3cm}

\bibitem{LaceyMoenPerezTorres2010}
M.~T. Lacey, K.~Moen, C.~P\'erez and R.~H. Torres,
Sharp weighted bounds for fractional integral operators,
J. Funct. Anal. 259 (2010), 1073--1097.

\vspace{-0.3cm}

\bibitem{Lee2013}
J.~M.~Lee,
Introduction to Smooth Manifolds,
Second edition, Grad. Texts in Math. 218,
Springer, New York, 2013.

\vspace{-0.3cm}

\bibitem{LiuXiao2017}
L. Liu and J. Xiao,
Restricting Riesz--Morrey--Hardy potentials,
J. Differential Equations 262 (2017), 5468--5496.

\vspace{-0.3cm}

\bibitem{LiuXiao2021}
L. Liu and J. Xiao,
Fractional Hardy--Sobolev $L^1$-embedding per capacity-duality,
Appl. Comput. Harmon. Anal. 51 (2021), 17--55.

\vspace{-0.3cm}

\bibitem{LiuXiao2025}
L. Liu and J. Xiao,
Riesz-type calculus for Lorentz--Morrey spaces,
Adv. Math. 478 (2025), Paper No. 110414, 60 pp.

\vspace{-0.3cm}

\bibitem{Mazya2011}
V.~Maz'ya,
Sobolev Spaces with Applications to Elliptic Partial Differential
Equations,
Second, revised and augmented edition,
Grundlehren math. Wiss. 342,
Springer, Heidelberg, 2011.

\vspace{-0.3cm}

\bibitem{MeyersZiemer1977}
N.~G. Meyers and W.~P. Ziemer,
Integral inequalities of Poincar\'e and Wirtinger type for BV functions,
Amer. J. Math. 99 (1977), 1345--1360.

\vspace{-0.3cm}

\bibitem{MuckenhouptWheeden1974}
B.~Muckenhoupt and R.~L. Wheeden,
Weighted norm inequalities for fractional integrals,
Trans. Amer. Math. Soc. 192 (1974), 261--274.

\vspace{-0.3cm}

\bibitem{MTT02}
C.~Muscalu, T.~Tao and C.~Thiele,
Multi-linear operators given by singular multipliers,
J. Amer. Math. Soc. 15 (2002), 469--496.

\vspace{-0.3cm}

\bibitem{MyyrylainenPerezWeigt2024}
K.~Myyryl\"ainen, C.~P\'erez and J.~Weigt,
Weighted fractional Poincar\'e inequalities via isoperimetric inequalities,
Calc. Var. Partial Differential Equations 63 (2024), Paper No.~205, 32 pp.

\vspace{-0.3cm}

\bibitem{ONeil1963}
R.~O'Neil,
Convolution operators and $L(p,q)$ spaces,
Duke Math. J. 30 (1963), 129--142.

\vspace{-0.3cm}

\bibitem{Perez1995}
C.~P\'erez,
Sharp $L^p$-weighted Sobolev inequalities,
Ann. Inst. Fourier (Grenoble) 45 (1995), 1--16.

\vspace{-0.3cm}

\bibitem{PerezRela2019}
C.~P\'erez and E.~Rela,
Degenerate Poincar\'e--Sobolev inequalities,
Trans. Amer. Math. Soc. 372 (2019), 6087--6133.

\vspace{-0.3cm}

\bibitem{PerezRela2023}
C.~P\'erez and E.~Rela,
A tribute to Pola Harboure: isoperimetric inequalities and the HMS
extrapolation theorem,
Rev. Un. Mat. Argentina 66 (2023), 265--280.

\vspace{-0.3cm}

\bibitem{Ponce2016}
A.~C. Ponce,
Elliptic PDEs, Measures and Capacities: From the Poisson Equation
to Nonlinear Thomas--Fermi Problems,
EMS Tracts in Mathematics 23, European Mathematical Society,
Z\"urich, 2016.

\vspace{-0.3cm}

\bibitem{SSVS2017}
A.~Schikorra, D.~Spector and J.~Van Schaftingen,
An $L^1$-type estimate for Riesz potentials,
Rev. Mat. Iberoam. 33 (2017), 291--303.

\vspace{-0.3cm}

\bibitem{ShiehSpector2015}
T.-T.~Shieh and D.~E.~Spector,
On a new class of fractional partial differential equations,
Adv. Calc. Var. 8 (2015), 321--336.

\vspace{-0.3cm}

\bibitem{Spector2020}
D.~Spector,
An optimal Sobolev embedding for $L^1$,
J. Funct. Anal. 279 (2020), Paper No. 108559, 26 pp.

\vspace{-0.3cm}

\bibitem{Stein1970}
E.~M. Stein,
Singular Integrals and Differentiability Properties of Functions,
Princeton Mathematical Series 30, Princeton University Press,
Princeton, NJ, 1970.

\vspace{-0.3cm}

\bibitem{SteinWeiss1960}
E.~M. Stein and G.~Weiss,
On the theory of harmonic functions of several variables.
I. The theory of $H^p$-spaces,
Acta Math. 103 (1960), 25--62.

\vspace{-0.3cm}

\bibitem{VanSchaftingen2013}
J.~Van Schaftingen,
Limiting Sobolev inequalities for vector fields and canceling linear
differential operators,
J. Eur. Math. Soc. (JEMS) 15 (2013), 877--921.

\end{thebibliography}
\end{document}